\newcommand{\bbR}{\ensuremath{\mathbb{R}}}
\newcommand{\bbQ}{\ensuremath{\mathbb{Q}}}
\newcommand{\bbF}{\ensuremath{\mathbb{F}}}
\newcommand{\bbZ}{\ensuremath{\mathbb{Z}}}
\newcommand{\hGamma}{\ensuremath{\widehat{\Gamma}}}
\newcommand{\hphi}{\ensuremath{\widehat{\varphi}}}
\DeclareMathOperator{\ab}{ab}
\DeclareMathOperator{\rk}{rk}
\DeclareMathOperator{\Aut}{Aut}
\DeclareMathOperator{\SL}{SL}
\DeclareMathOperator{\Tor}{Tor}
\DeclareMathOperator{\Div}{Div}
\DeclareMathOperator{\Nil}{Nil}
\DeclareMathOperator{\fin}{fin}
\numberwithin{equation}{section}
\newtheorem{thmnr}{Theorem}[section]
\newtheorem{propnr}[thmnr]{Proposition}
\newtheorem{lemnr}[thmnr]{Lemma}
\newtheorem{cornr}[thmnr]{Corollary}
\theoremstyle{definition}
\newtheorem{dfnnr}[thmnr]{Definition}
\newtheorem{rmknr}[thmnr]{Remark}
\newtheorem{exnr}[thmnr]{Example}
\newtheorem{claimnr}[thmnr]{Claim}
\newtheorem{ques}[thmnr]{Question}
\newtheorem{conjnr}[thmnr]{Conjecture}
\begin{document}

\title{Structure of normally and finitely non-co-Hopfian groups}

\author{Wouter van Limbeek}

\date{\today}

\begin{abstract} A group $G$ is (finitely) co-Hopfian if it does not contain any proper (finite-index) subgroups isomorphic to itself. We study finitely generated groups $G$ that admit a descending chain of proper normal finite-index subgroups, each of which is isomorphic to $G$. We prove that up to finite index, these are always obtained by pulling back a chain of subgroups from a free abelian quotient. We give two applications: First, we show any proper self-embedding of $G$ with finite-index characteristic image arises by pulling back an endomorphism of the abelianization, and secondly, we prove special cases (for normal subgroups) of conjectures of Benjamini and Nekrashevych-Pete regarding the classification of scale-invariant groups. \end{abstract}

\maketitle

\tableofcontents

\section{Introduction}
\label{sec:intro}

\subsection{Main result} A group $\Gamma$ is \emph{co-Hopfian} if it does not contain any proper subgroup isomorphic to itself. A classification of groups that are not co-Hopfian seems extremely difficult because a free group is not co-Hopfian, and similarly, any nontrivial free product $\Gamma=A\ast B$ is not co-Hopfian. Indeed, $\Gamma$ contains the subgroup $A\ast (gBg^{-1})$ which is isomorphic to $A\ast B$ but is a proper subgroup whenever $g\notin A\cup B$. See e.g. \cite{cohopf1,cohopf2,cohopf3} and references therein for further information on and examples of co-Hopfian groups.

Following \cite{fincohopf}, we say a group $\Gamma$ is \emph{finitely co-Hopfian} if $\Gamma$ does not contain any proper finite-index subgroup isomorphic to $\Gamma$. The finite co-Hopf property seems far more tractable than the co-Hopf property in general, at least when $\Gamma$ is finitely generated. Indeed it seems failure of the finite co-Hopf property is closely related to the presence of nilpotent subgroups. De Cornulier has studied which finitely generated nilpotent groups are finitely non-co-Hopfian \cite{cohopfnilp}.

In part this is suggested by an analogous problem of topological nature: Namely, if $M$ is a closed manifold, we say $M$ is \emph{self-covering} if there exists a finite cover $M'\to M$ with degree greater than 1 and such that $M'$ is homeomorphic to $M$. If $M$ is self-covering, then $\pi_1(M)$ is not finitely co-Hopfian. See \cite{selfcover} for more information on self-covering manifolds.

Historically, a situation of particular interest has been the classification of expanding maps. A map $f:M\to M$ is \emph{expanding} if there is a Riemannian metric on $M$ such that $\|Df(v)\|>\|v\|$ for any unit tangent vector $v\in T^1 M$. Franks observed that if $M$ admits an expanding map, then $\pi_1(M)$ has polynomial growth \cite{franksexp}. Gromov proved that a group with polynomial growth is virtually nilpotent, and used this to prove that any manifold admitting an expanding map is infranil \cite{polygrowth}. Monod has informed me by personal communication that the group-theoretic analogue is true: If $\Gamma$ is a finitely generated group that admits an endomorphism $\varphi$ that is expanding with respect to the word metric, in the sense that there exists $L>1$ and $C\geq 0$ such that for all $g,h\in\Gamma$, we have $|d(\varphi(g),\varphi(h))- L d(g,h)|\leq C$, then $\Gamma$ is virtually nilpotent.


Finitely non-co-Hopfian groups have a self-similarity structure:  if $\Gamma_1\subsetneq \Gamma$ is a proper finite-index subgroup isomorphic to $\Gamma$, then by fixing an isomorphism $\Gamma\cong\Gamma_1$, we obtain a self-embedding $\Gamma\hookrightarrow \Gamma$. The images of iterates of this map give a chain
	\begin{equation} \Gamma=\Gamma_0\supsetneq \Gamma_1 \supsetneq \Gamma_2 \supsetneq \dots \label{eq:chain} \end{equation}
of finite-index subgroups of $\Gamma$, each of which is isomorphic to $\Gamma$. 


In this paper we initiate the study of finitely co-Hopfian groups without dynamical conditions on the self-embeddings. Instead, we assume the chain \eqref{eq:chain} consists of normal subgroups. The obvious examples of groups $\Gamma$ admitting such chains are free abelian groups. Our main result is that any example arises in this way:

\begin{thmnr} Let $\Gamma$ be a finitely generated group that admits a decreasing chain of subgroups 
	$$\Gamma=\Gamma_0\supsetneq \Gamma_1 \supsetneq \Gamma_2\supsetneq \dots$$
where $\Gamma_k$ are finite-index normal subgroups with $\Gamma_k\cong\Gamma$. Set $\Gamma_\infty:=\cap_k \Gamma_k$. Then $\Gamma\slash\Gamma_\infty$ is nilpotent and, modulo torsion, it is free abelian. \label{thm:main}
\end{thmnr}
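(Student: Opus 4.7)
The plan is to split the statement into two parts: first, that $\Gamma/\Gamma_\infty$ is nilpotent, and second, that its torsion-free quotient is free abelian. I would use profinite methods (via the cited results of Reid) for the former, and the structure theory of finitely generated nilpotent groups for the latter.

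\textbf{Nilpotency.} I would pass to the profinite completion $\widehat{\Gamma}$, where each $\Gamma_k$ has open normal closure $\overline{\Gamma_k}$. Because each $\Gamma_k\cong\Gamma$, the chain $\{\overline{\Gamma_k}\}$ is self-similar: each $\overline{\Gamma_k}$ is isomorphic to $\widehat{\Gamma}$ as a profinite group and carries its own analogous chain. By Theorem \ref{thm:reid_emb} and Proposition \ref{prop:con_nilp}, $\widehat{\Gamma}$ then contains a nontrivial pronilpotent normal subgroup. I expect that the self-similarity forces this subgroup to be open (or at least of bounded index in every $\overline{\Gamma_k}$), so that the finite quotients $\Gamma/\Gamma_k$ are nilpotent of class uniformly bounded in $k$. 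Taking the inverse limit, $\widehat{\Gamma}/\bigcap_k\overline{\Gamma_k}$ is pronilpotent of bounded class, and hence its dense subgroup $\Gamma/\Gamma_\infty$ is nilpotent.

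\textbf{Free abelian modulo torsion.} Write $\bar{\Gamma}:=\Gamma/\Gamma_\infty$, finitely generated nilpotent by the previous step, and let $T\trianglelefteq\bar{\Gamma}$ denote its (finite, by Malcev) torsion subgroup. Set $N:=\bar{\Gamma}/T$, a torsion-free finitely generated nilpotent group. After verifying that the isomorphisms $\varphi_k:\Gamma\to\Gamma_k$ carry $\Gamma_\infty$ to $\Gamma_\infty$, the chain descends to a chain of finite-index normal subgroups of $N$, each isomorphic to $N$ and with trivial intersection. Invoking de Cornulier's work on finitely non-co-Hopfian nilpotent groups \cite{cohopfnilp} in this normal setting, one should conclude that $N$ is abelian, hence (being torsion-free and finitely generated) free abelian.

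\textbf{Main obstacle.} I expect the hardest steps to be (i) showing that the nilpotency class of $\Gamma/\Gamma_k$ is uniformly bounded in $k$, which is required for actual (not merely residual) nilpotency of $\Gamma/\Gamma_\infty$, and (ii) verifying that $\Gamma_\infty$ is preserved by the isomorphisms $\varphi_k$, so that the chain descends to $\bar{\Gamma}$. The second point is not automatic; it likely requires characterizing $\Gamma_\infty$ intrinsically inside $\Gamma$---for instance as the intersection of certain characteristic finite-index subgroups singled out by the pronilpotent structure produced in the first step---rather than merely as the intersection of the given chain.
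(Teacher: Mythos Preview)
You have correctly located the two real difficulties, but the proposal does not resolve either, and the paper's approach to both differs substantially from what you sketch.

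For obstacle (ii), the paper never attempts to descend to $\bar\Gamma$ directly. Once a uniform bound $c$ on the nilpotency class of $\Gamma/\Gamma_k$ is in hand, one works instead with $N_c:=\Gamma/\gamma_c(\Gamma)$: since $\gamma_c(\Gamma)$ is characteristic, the $\varphi_k$ descend automatically to self-embeddings $\bar\varphi_k$ of $N_c$, and one checks that $N_c/\bigcap_k\bar\varphi_k(N_c)\cong\bar\Gamma$. This sidesteps the question of whether $\varphi_k$ preserves $\Gamma_\infty$ entirely, and no intrinsic characterization of $\Gamma_\infty$ is needed.

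For obstacle (i), your expectation that self-similarity forces Reid's pronilpotent subgroup to be open is not argued, and the paper's route is quite different and more elaborate. The paper first proves the theorem in the nilpotent case (Theorem~\ref{thm:nilp_case}) via the Mal'cev Lie hull---not via de Cornulier---by extending the $\varphi_k$ to automorphisms of the ambient simply-connected Lie group $G$ and embedding all the finite groups $\Gamma/\Gamma_k$ into a single compact nilpotent Lie group $H/\Gamma$, whose identity component is a torus; this yields abelian subgroups of uniformly bounded index. Crucially, this nilpotent case is then \emph{fed back in} as a tool to attack the uniform bound: applied to the torsion-free quotients $N_c^{(\infty)}$ for varying $c$, it bounds the class of $(\Gamma/\Gamma_k)^{(p)}$ at all but finitely many primes (Section~\ref{sec:global}); the remaining primes are handled one at a time by a finer analysis of Reid's decomposition $C=U\times S$ (Section~\ref{sec:local}). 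So the nilpotent case is not merely the final step after nilpotency of $\bar\Gamma$ is known, but a recurring ingredient in establishing it. Your appeal to \cite{cohopfnilp} is also misplaced: de Cornulier classifies which nilpotent groups are finitely co-Hopfian, which does not directly yield that a torsion-free nilpotent group carrying a normal self-similar chain with trivial intersection must be abelian; the paper's Lie-hull argument in Theorem~\ref{thm:nilp_case} proves exactly this.
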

\begin{rmknr} Equivalently, $\Gamma\slash\Gamma_\infty$ is nilpotent and virtually abelian (see Proposition \ref{prop:virt_ab_crit}).
\label{rmk:abmodtor}
\end{rmknr}
In light of Theorem \ref{thm:main}, it is natural to ask if the assumption that the chain \eqref{eq:chain} consists of normal subgroups can be removed:
	\begin{ques} Can one classify finitely non-co-Hopfian groups in terms of nilpotent groups in a manner similar to Theorem \ref{thm:main}?\end{ques}
In any example we know of, a finitely non-co-Hopfian group $\Gamma$ admits a quotient that contains a nontrivial normal nilpotent subgroup. It seems plausible that this is the case for any finitely generated finitely non-co-Hopfian group: It is not difficult to see that any self-embedding $\Gamma\hookrightarrow\Gamma$ induces a self-embedding of the profinite completion $\hGamma$ of $\Gamma$. In this case, work of Reid easily implies that the profinite completion $\widehat{\Gamma}$ contains a nontrivial pronilpotent normal subgroup \cite{reidemb} (see Theorem \ref{thm:reid_emb} and Proposition \ref{prop:con_nilp}). We give examples of $\Gamma$ where the normal nilpotent subgroup can only be found in a quotient and ones where it is not finitely generated (see Example \ref{ex:no_factor}).

Let us now return to the setting of Theorem \ref{thm:main}. It is immediate from Theorem \ref{thm:main} that any chain of subgroups as in Theorem \ref{thm:main} comes from pullback of a chain of subgroups of a virtually abelian group:
\begin{cornr} Let $\Gamma$ be finitely generated and suppose $\Gamma$ admits a decreasing chain of $(\Gamma_k)_{k\geq 0}$ of finite-index normal subgroups with $\Gamma_k\cong\Gamma$. Then there exist
	\begin{itemize}
		\item a finitely generated nilpotent group $N$ such that $N\slash \Tor(N)$ is abelian,
		\item a surjection $\pi:\Gamma\to N$, and
		\item finite-index normal subgroups $N_k\subseteq N$,
	\end{itemize}
such that $\Gamma_k=\pi^{-1}(N_k)$ for every $k\geq 1$.
\label{cor:pullback}
\end{cornr}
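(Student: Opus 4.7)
The entire content of the corollary is already packaged inside Theorem \ref{thm:main}; the plan is simply to take $N := \Gamma/\Gamma_\infty$, let $\pi : \Gamma \to N$ be the quotient map, and set $N_k := \pi(\Gamma_k) = \Gamma_k/\Gamma_\infty$.

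First I would verify that $N$ has the desired abstract structure. Since $\Gamma$ is finitely generated, so is its quotient $N$, and Theorem \ref{thm:main} says $N$ is nilpotent with $N/\Tor(N)$ free abelian, which is exactly what the corollary asks for. (One may additionally remark that, as a finitely generated nilpotent group, $\Tor(N)$ is finite, so $N$ is virtually abelian in the sense of Remark \ref{rmk:abmodtor}, but this is not needed.)

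Next, I would check the compatibility of the chain under pullback. Because $\Gamma_\infty = \bigcap_j \Gamma_j \subseteq \Gamma_k$, the subgroup $N_k = \Gamma_k/\Gamma_\infty$ of $N$ is well-defined, and the standard correspondence between subgroups containing the kernel of a surjection and subgroups of the target gives $\pi^{-1}(N_k) = \Gamma_k$. Normality of $\Gamma_k$ in $\Gamma$ together with surjectivity of $\pi$ yields normality of $N_k$ in $N$, and $[N : N_k] = [\Gamma : \Gamma_k] < \infty$.

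The main obstacle in this corollary has already been resolved: it is Theorem \ref{thm:main} itself. Once that structural theorem is in hand, the above is bookkeeping and there is no further substantial step — no ingredient beyond the lattice isomorphism theorem applied to the quotient $\Gamma \twoheadrightarrow \Gamma/\Gamma_\infty$ is required.
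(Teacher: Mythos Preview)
Your proposal is correct and matches the paper's approach exactly: the paper states that Corollary~\ref{cor:pullback} is ``immediate from Theorem~\ref{thm:main}'' without giving a separate proof, and your choice $N=\Gamma/\Gamma_\infty$, $\pi$ the quotient map, $N_k=\Gamma_k/\Gamma_\infty$ is precisely the intended unpacking.
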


Further, it is not necessarily true that $N=\Gamma\slash\Gamma_\infty$ is torsion-free (see Example \ref{ex:tor}). However, in this example the torsion in $N=\Gamma\slash\Gamma_\infty$ already appears in $\Gamma\slash\Gamma_1$ but does not increase thereafter. This is a general phenomenon: Since the torsion subgroup of any finitely generated nilpotent group is finite, and the chain of subgroups $(N_k)_k$ of $N$ in Corollary \ref{cor:pullback} is decreasing with trivial intersection, we see that for $k\gg 1$, the group $N_k$ is free abelian. Hence the torsion can be avoided by passing to a finite-index subgroup of $\Gamma$ that is compatible with the chain of subgroups: 
\begin{cornr} Let $\Gamma$ be a finitely generated group that admits a chain of finite-index normal subgroups $(\Gamma_k)_{k\geq 0}$ with $\Gamma_k\cong\Gamma$. Set $\Gamma_\infty:=\cap_k \Gamma_k$. Then for $l\gg 1$, the group $\Gamma_l\slash\Gamma_\infty$ is free abelian.

In particular $\Gamma_l$ admits a surjection $\pi:\Gamma_l\twoheadrightarrow A$ onto a free abelian group $A$ such that there are finite-index subgroups $A_k\subseteq A$ with $\Gamma_k=\pi^{-1}(A_k)$ for any $k\geq l$.
\label{cor:notor}
\end{cornr}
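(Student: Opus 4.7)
The plan is to deduce this directly from Theorem \ref{thm:main} (in its packaged form, Corollary \ref{cor:pullback}) by descending to a term of the chain that is deep enough to avoid torsion. Set $N := \Gamma/\Gamma_\infty$ and let $q \colon \Gamma \twoheadrightarrow N$ be the quotient map; then the subgroups $N_k := q(\Gamma_k) = \Gamma_k/\Gamma_\infty$ form a decreasing chain of finite-index normal subgroups of $N$ with $\bigcap_k N_k = \{e\}$ and $\Gamma_k = q^{-1}(N_k)$. By Theorem \ref{thm:main}, $N$ is finitely generated nilpotent with $N/\Tor(N)$ free abelian.

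Next I would invoke the standard fact that a finitely generated nilpotent group has finite torsion subgroup. Combined with $\bigcap_k N_k = \{e\}$, this forces $N_k \cap \Tor(N) = \{e\}$ for all sufficiently large $k$, say $k \geq l$. For such $k$ the composition $N_k \hookrightarrow N \twoheadrightarrow N/\Tor(N)$ is then injective, exhibiting $N_k$ as a subgroup of the finitely generated free abelian group $N/\Tor(N)$, hence itself free abelian. In particular $\Gamma_l/\Gamma_\infty \cong N_l$ is free abelian, which proves the first claim.

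For the second assertion, set $A := \Gamma_l/\Gamma_\infty$ and let $\pi \colon \Gamma_l \twoheadrightarrow A$ be the quotient map (the restriction of $q$ to $\Gamma_l$). For $k \geq l$ one has $\Gamma_\infty \subseteq \Gamma_k \subseteq \Gamma_l$, so $A_k := \Gamma_k/\Gamma_\infty$ is a subgroup of $A$ of finite index $[\Gamma_l : \Gamma_k]$, and $\pi^{-1}(A_k) = \Gamma_k$ by the correspondence theorem. There is no real obstacle in this argument: all the substantive content lives in Theorem \ref{thm:main}, and what remains is the elementary observation that a decreasing chain of subgroups with trivial intersection must eventually escape any given finite subset.
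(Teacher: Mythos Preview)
Your argument is correct and is essentially identical to the paper's own reasoning, which appears in the paragraph immediately preceding the statement of Corollary~\ref{cor:notor}: use Theorem~\ref{thm:main} to see that $N=\Gamma/\Gamma_\infty$ is finitely generated nilpotent with $N/\Tor(N)$ free abelian, invoke finiteness of $\Tor(N)$ (Proposition~\ref{prop:tor_subgp}) together with $\bigcap_k N_k=\{e\}$ to conclude $N_k\cap\Tor(N)=\{e\}$ for $k\gg 1$, and then read off the pullback description. There is nothing to add.
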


\begin{rmknr} In general, one may not be able to realize the free abelian group $A$ as a direct factor of any finite-index subgroup of $\Gamma$ (see Example \ref{ex:no_factor}). In that example, $\Gamma$ is given by a semi-direct product $\Delta\rtimes A$, and the self-embedding respects this decomposition. We do not know whether it is always the case that a finite-index subgroup of $\Gamma$ decomposes as a semi-direct product.\end{rmknr}

\begin{rmknr} In \cite{selfcover}, we studied a topological analogue of a special case of Theorem \ref{thm:main}, namely self-covers $f:M'\to M$ of a closed manifold $M$ such that every iterate $f^n:M\to M$ is a regular cover. We proved that any such self-cover is induced by a linear endomorphism $A:T\to T$ of a compact torus $T$ in the following sense: There is a map $h:M\to T$ such that $h\circ f$ is homotopic to $A\circ h$. This is proved using a mixture of the theory of topological transformation groups and structural results for locally finite groups. Unfortunately, neither of these is applicable in the setting of Theorem \ref{thm:main}.\end{rmknr}

\subsection{Characteristic self-embeddings} 


Recall that a subgroup of $\Gamma$ is \emph{characteristic} if it is invariant under all automorphisms of $\Gamma$. In particular, a characteristic subgroup is invariant under all inner automorphisms, and hence it is normal. 

Obvious examples of groups $\Gamma$ with characteristic finite-index subgroups isomorphic to $\Gamma$ are given by free abelian groups. The following result shows that all examples come from free abelian groups.

\begin{cornr} Let $\Gamma$ be a finitely generated group. If $\Gamma$ admits a finite-index characteristic copy $\Gamma'$ of itself, then there is a free abelian group $A$ with characteristic finite-index subgroup $A'$, and a surjection $\pi:\Gamma\to A$ such that $\Gamma'=\pi^{-1}(A')$. In particular, we have $b_1(\Gamma)>0$.
\label{cor:char}
\end{cornr}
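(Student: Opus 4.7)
The plan is to iterate the self-embedding into a descending characteristic chain, invoke Theorem~\ref{thm:main} together with Corollary~\ref{cor:notor}, upgrade ``free abelian modulo torsion'' to ``free abelian'' in the characteristic setting, and finally argue that the image of $\Gamma'$ in the resulting free abelian quotient has the form $mA$.

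First, fix an isomorphism $\varphi\colon\Gamma\to\Gamma'$ and set $\Gamma_k:=\varphi^k(\Gamma)$. A short induction shows each $\Gamma_k$ is characteristic in $\Gamma$: assuming $\Gamma_k$ is characteristic, the isomorphism $\varphi^k\colon\Gamma\to\Gamma_k$ carries the characteristic subgroup $\Gamma_1\subset\Gamma$ onto $\Gamma_{k+1}\subset\Gamma_k$, so $\Gamma_{k+1}$ is characteristic in $\Gamma_k$ and hence in $\Gamma$. Thus Theorem~\ref{thm:main} applies and, together with Corollary~\ref{cor:notor}, gives $N:=\Gamma/\Gamma_\infty$ nilpotent with $N_l:=\Gamma_l/\Gamma_\infty$ free abelian for $l\gg 1$. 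I would then upgrade to $\Tor(N)=0$: since $\varphi(\Gamma_k)=\Gamma_{k+1}$ forces $\varphi(\Gamma_\infty)=\Gamma_\infty$, the map $\varphi$ descends to an injective endomorphism $\bar\varphi\colon N\to N$ with image $N_1$, and since $\Tor(N)$ is finite, $\bar\varphi$ restricts to a bijection $\Tor(N)\to\Tor(N_1)\subseteq\Tor(N)$. Iterating gives $\Tor(N)\subseteq N_l$ for every $l$, and taking $l$ large with $N_l$ torsion-free forces $\Tor(N)=0$. Hence $N\cong\bbZ^r$ is free abelian. Setting $A:=N$, $\pi\colon\Gamma\twoheadrightarrow A$, and $A':=\pi(\Gamma')$ gives $\Gamma'=\pi^{-1}(A')$, and $b_1(\Gamma)\geq r\geq 1$ since the chain is strictly decreasing.

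The main obstacle is showing that $A'$ is characteristic in $A$, i.e.\ $A'=mA$ for some $m\geq 1$ (equivalently $\Gamma/\Gamma'\cong(\bbZ/m)^r$). The induced map $\rho\colon\Aut(\Gamma)\to\Aut(A)$ preserves $A'$ and each $A_k:=\bar\varphi^k(A)$, but this yields invariance only under the possibly proper subgroup $H:=\rho(\Aut(\Gamma))\subseteq\GL_r(\bbZ)$. My approach would be an auxiliary lemma: if $H\subseteq\GL_r(\bbZ)$ acts irreducibly on $(\bbZ/p)^r$ for every prime $p$, then every $H$-invariant finite-index sublattice of $\bbZ^r$ equals $m\bbZ^r$ for some $m\geq 1$. (Indeed, if $p\mid[\bbZ^r\colon L]$ then $L+p\bbZ^r\ne\bbZ^r$, so irreducibility forces $L\subseteq p\bbZ^r$; extracting the factor $p$ and iterating yields the claim.) The required irreducibility should come from the self-similar structure, in particular from the normalization relation $\bar\varphi\cdot H\cdot\bar\varphi^{-1}\subseteq H$ (obtained by conjugating automorphisms of $\Gamma$ by $\varphi$) together with the contracting property $\bigcap_k A_k=0$, which should prevent a proper nontrivial $H$-invariant subspace of $(\bbZ/p)^r$ from persisting through the chain.
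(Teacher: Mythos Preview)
Your Steps 1--3 are correct and in fact go further than the paper's own argument. The paper's proof consists entirely of building the chain (showing $\Gamma_n=\varphi^n(\Gamma)$ is normal by induction: $\Gamma_{n+1}$ is characteristic in $\Gamma_n$, hence invariant under the $\Gamma$-conjugation action on $\Gamma_n$) and then declaring that ``the result now follows immediately from Theorem~\ref{thm:main}.'' It does not explicitly carry out your torsion-freeness argument (Step 3), nor does it say anything about why $A'$ can be taken characteristic in $A$. So through Step 3 you are essentially following the paper's route, only more carefully; at that point you already have $A=\Gamma/\Gamma_\infty$ free abelian, $\Gamma'=\pi^{-1}(A')$, and $b_1(\Gamma)>0$.

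Your Step 4, however, has a real gap. First a minor correction: the normalization goes the other way. Restricting $\alpha\in\Aut(\Gamma)$ to the characteristic subgroup $\Gamma_1$ and transporting back via $\varphi$ yields $\varphi^{-1}\alpha\varphi\in\Aut(\Gamma)$, so on $A$ one obtains $\bar\varphi^{-1}H\bar\varphi\subseteq H$, not $\bar\varphi H\bar\varphi^{-1}\subseteq H$. More importantly, the irreducibility of $H$ on $(\bbZ/p)^r$ simply does not follow from the constraints you list. Abstractly, take $r=2$, $H=\{\pm I\}$, and $\bar\varphi$ given by the matrix $M=\mathrm{diag}(2,3)$: then $M^{-1}HM=H$, every $A_k=2^k\bbZ\times 3^k\bbZ$ is $H$-invariant, and $\bigcap_k A_k=0$, yet $A_1=2\bbZ\times 3\bbZ$ is not of the form $m\bbZ^2$. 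Thus neither the contracting property, the $H$-invariance of the chain, nor the (corrected) normalization relation forces $H$ to act irreducibly modulo any prime. Making your approach work would require an independent reason why the image of $\Aut(\Gamma)$ in $\GL_r(\bbZ)$ is large, and nothing in the hypotheses provides one. You have correctly isolated the one point the paper's proof does not address, but the mechanism you propose for it does not succeed as written.
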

\begin{rmknr} Note that Corollary \ref{cor:char} applies as soon as one has a single finite-index characteristic subgroup isomorphic to $\Gamma$, whereas in the setting of normal subgroups, we need a chain for Theorem \ref{thm:main} to hold.\end{rmknr}

\subsection{Scale-invariant groups} Benjamini proposed the following algebraic analogue to the existence of an expanding map on a closed manifold \cite{benjscale}.

\begin{dfnnr}[Benjamini] A finitely generated group $\Gamma$ is \emph{scale-invariant} if there exists a nested decreasing sequence
	$$\Gamma=\Gamma_0\supseteq \Gamma_1 \supseteq \Gamma_2 \supseteq \dots$$
such that $\Gamma_n\cong\Gamma$ for all $n$, and $\cap_n \Gamma_n$ is finite.
\label{dfn:scaleinv}
\end{dfnnr}
\begin{rmknr} The motivation for the introduction of scale-invariant groups comes from percolation theory on graphs: In the traditional setting of percolation, the underlying graph is the grid $\bbZ^d$, and one has access to the very powerful renormalization method. Unfortunately this does not generalize to the Cayley graph of a general group $\Gamma$, but it seems plausible that renormalization techniques  generalize to Cayley graphs of scale-invariant groups. See \cite{scaleinv} for more information on the relationship between percolation theory and scale-invariant groups. \end{rmknr}

Recall that Gromov proved that any closed manifold $M$ admitting an expanding map is infranil, and in particular $\pi_1(M)$ is virtually nilpotent \cite{polygrowth}. Similarly, Benjamini conjectured scale-invariant groups are virtually nilpotent \cite{benjscale}. Many counterexamples to this conjecture were produced by Nekrashevych-Pete \cite{scaleinv}:

\begin{thmnr}[Nekrashevych-Pete \cite{scaleinv}] There exist scale-invariant groups that are not virtually nilpotent. Indeed, if $\Lambda$ is a scale-invariant group with a nested decreasing sequence $(\Lambda_n)_n$ as in Definition \ref{dfn:scaleinv}, and $Q\subseteq \Aut(\Lambda)$ is a group of automorphisms such that each $\Lambda_n$ is $Q$-invariant, then $\Gamma=\Lambda\rtimes Q$ is scale-invariant.

For example, for any subgroup $Q\subseteq \SL(n,\bbZ)$, the group $\bbZ^n\rtimes Q$ is scale-invariant. 
\label{thm:scaleinv}
\end{thmnr}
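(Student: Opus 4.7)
The strategy is to build a nested chain $\Gamma = \Gamma_0 \supseteq \Gamma_1 \supseteq \dots$ in $\Gamma = \Lambda \rtimes Q$ with each $\Gamma_n \cong \Gamma$ and $\bigcap_n \Gamma_n$ finite, by conjugating the obvious subgroups $\Lambda_n \rtimes Q$ by suitably chosen elements $(v_n, 1) \in \Gamma$. The stated example $\bbZ^n \rtimes Q$ will be the specialization $\Lambda = \bbZ^n$, $\Lambda_n = 2^n \bbZ^n$.

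First I would check that $\Lambda_n \rtimes Q$ makes sense as a subgroup of $\Gamma$ (by $Q$-invariance of $\Lambda_n$) and is isomorphic to $\Gamma$ via $(\lambda, q) \mapsto (\phi_n(\lambda), q)$, provided one has a $Q$-equivariant isomorphism $\phi_n \colon \Lambda \xrightarrow{\sim} \Lambda_n$; in the motivating example one may take $\phi_n(v) = 2^n v$, which commutes with every $Q \subseteq \GL(n, \bbZ)$, and the theorem is naturally read as supplying this data. The naive chain $\Lambda_n \rtimes Q$ has intersection $(\bigcap_n \Lambda_n) \rtimes Q$, which is infinite as soon as $Q$ is, so the key idea is to twist each subgroup by an inner automorphism of $\Gamma$: set
\[ \Gamma_n := (v_n, 1)(\Lambda_n \rtimes Q)(v_n, 1)^{-1} = \{(\lambda + (1-q) v_n,\, q) : \lambda \in \Lambda_n,\ q \in Q\}, \]
which is still isomorphic to $\Gamma$ as a conjugate subgroup. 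The nesting $\Gamma_{n+1} \subseteq \Gamma_n$ reduces, using the $Q$-invariance of $\Lambda_n$, to the compatibility $v_{n+1} - v_n \in \Lambda_n$; so the $v_n$ should form a Cauchy sequence in the profinite topology along $(\Lambda_n)$, converging to some $v_\infty \in \widehat{\Lambda}$.

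The main obstacle is arranging $\bigcap_n \Gamma_n$ to be finite. A direct computation shows $(\mu, q) \in \Gamma_n$ iff $(1-q) v_n \equiv \mu \pmod{\Lambda_n}$, and passing to the intersection over $n$ gives $\mu = (1-q) v_\infty$ in $\widehat{\Lambda}$. For $q = 1$ this forces $\mu \in \bigcap_n \Lambda_n$, already finite by the scale-invariance of $\Lambda$. For $q \neq 1$ it requires $v_\infty$ to lie in the closed subgroup $S_q := (1-q)^{-1}(\Lambda) \subseteq \widehat{\Lambda}$; it therefore suffices to choose $v_\infty \in \widehat{\Lambda} \setminus \bigcup_{q \neq 1} S_q$. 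One checks easily that the set $\{q \in Q : (1-q)v_\infty \in \Lambda\}$ is a subgroup of $Q$, so avoiding every $S_q$ for $q \neq 1$ forces this subgroup to be trivial. Since $Q$ is countable in all cases of interest and each $S_q$ is a proper closed subgroup of $\widehat{\Lambda}$ — for $\Lambda = \bbZ^n$, because $(1-q) \widehat{\bbZ^n}$ is an infinite closed subgroup of $\widehat{\bbZ^n}$ not contained in $\bbZ^n$ whenever $q \neq 1$ — a Baire category or Haar-measure argument produces such a $v_\infty$. Density of $\Lambda$ in $\widehat{\Lambda}$ then lets us realize $v_\infty$ as the limit of $v_n \in \Lambda$ satisfying the Cauchy compatibility, so that $\bigcap_n \Gamma_n = \bigcap_n \Lambda_n$ is finite, yielding scale-invariance of $\Gamma$.
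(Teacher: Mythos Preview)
The paper does not give its own proof of this theorem; it is quoted from Nekrashevych--Pete, and only a sketch of their method appears in the paragraph after the statement. That sketch is phrased in terms of self-similar actions on rooted trees: $\Gamma$ acts self-similarly on a tree, and the $\Gamma_n$ are taken to be vertex stabilizers along a suitably chosen aperiodic geodesic ray. Your direct construction is essentially a concrete unpacking of this for the abelian case $\Lambda=\bbZ^n$: the coset tree of the chain $(\Lambda_n)$ is the rooted tree, your sequence $(v_n)$ with $v_{n+1}-v_n\in\Lambda_n$ is precisely a geodesic ray in it, $v_\infty\in\widehat\Lambda$ is its boundary point, and the genericity you impose on $v_\infty$ is what corresponds to aperiodicity of the ray.

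There is one slip worth flagging. Your set $S_q=(1-q)^{-1}(\Lambda)\subseteq\widehat\Lambda$ is a subgroup but is \emph{not} closed, since $\Lambda$ is dense (not closed) in $\widehat\Lambda$. What makes the Baire (or Haar-measure) argument work is that $S_q$ is a countable union of cosets of the closed subgroup $\ker(1-q)$; for $q\neq 1$ this kernel has infinite index in $\widehat\Lambda$ and is therefore nowhere dense and Haar-null, so each $S_q$ is meager and null, and the countable union over $q\in Q\setminus\{1\}$ can still be avoided. With that correction your argument goes through for $\Lambda=\bbZ^n$. You are also right to note that the hypothesis ``each $\Lambda_n$ is $Q$-invariant'' alone does not give $\Lambda_n\rtimes Q\cong\Gamma$; one needs the isomorphisms $\Lambda\cong\Lambda_n$ to be $Q$-equivariant (as with $\phi_n=2^n\cdot\,$ on $\bbZ^n$), which is implicit in the examples and is packaged differently in the self-similar-action framework.
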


In these examples, the sequence of subgroups $(\Gamma_n)_n$ is never generated by a single self-embedding, such as an expanding map. To recapture this aspect, Nekrashevych-Pete proposed the following definition:

\begin{dfnnr}[{Nekrashevych-Pete \cite{scaleinv}}] A finitely generated group $\Gamma$ is \emph{strongly scale-invariant} if there exists an embedding $\varphi:\Gamma\hookrightarrow \Gamma$ with image of finite index and such that $\cap_n \, \varphi^n(\Gamma)$ is finite.
\label{dfn:sscaleinv}
\end{dfnnr}
Any strongly scale-invariant group is clearly scale-invariant. However, the counterexamples to Benjamini's conjecture produced in Theorem \ref{thm:scaleinv} are not strongly scale-invariant. Indeed, these groups $\Gamma$ are produced using a self-similar action on a rooted tree, and the subgroups $\Gamma_n$ are constructed by fixing a suitable geodesic ray $c$ and taking $\Gamma_n$ to be the vertex stabilizer at the $n$th level. The self-similarity of the action implies that $\Gamma_n\cong\Gamma$ and since these trees have finite degree, it follows that $[\Gamma:\Gamma_n]<\infty$. In their examples, it is essential that the geodesic ray is aperiodic. This implies that there is no embedding $\varphi:\Gamma\hookrightarrow \Gamma$ such that $\Gamma_n=\varphi^n(\Gamma)$. Based on this observation, Nekrashevych-Pete proposed the following variant of Benjamini's conjecture:

\begin{conjnr}[Nekrashevych-Pete \cite{scaleinv}] Any strongly scale-invariant group is virtually nilpotent.
\label{conj:sscaleinv}
\end{conjnr}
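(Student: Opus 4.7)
The plan is to establish the conjecture under the additional hypothesis that each iterated image $\varphi^n(\Gamma)$ is normal in $\Gamma$; in that range the conjecture reduces cleanly to Theorem \ref{thm:main}. Write $\Gamma_n := \varphi^n(\Gamma)$, so $\Gamma_1$ is a proper finite-index normal subgroup of $\Gamma$, of some index $d>1$. Because $\varphi$ restricts to an isomorphism $\Gamma_{n-1}\xrightarrow{\sim}\Gamma_n$ sending $\Gamma_n$ to $\Gamma_{n+1}$, one inductively gets $[\Gamma_n:\Gamma_{n+1}]=d$ for every $n$, so the chain is strictly descending, each $\Gamma_n$ is normal in $\Gamma$ by assumption, and each $\Gamma_n \cong \Gamma$.

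Setting $\Gamma_\infty := \bigcap_n \Gamma_n$, Theorem \ref{thm:main} then gives that $\Gamma/\Gamma_\infty$ is nilpotent and (by Remark \ref{rmk:abmodtor}) virtually abelian. Strong scale-invariance forces $\Gamma_\infty$ to be finite, so it remains to show the following purely group-theoretic fact: a finitely generated group $G$ with a finite normal subgroup $F$ and virtually nilpotent quotient $G/F$ is itself virtually nilpotent. For this, let $H \leq G$ be the preimage of a nilpotent finite-index subgroup of $G/F$, and consider $C_H(F)$, which has finite index in $H$ because $H/C_H(F)$ embeds into the finite group $\Aut(F)$. Then $C_H(F)\cap F = Z(F)$ sits centrally in $C_H(F)$, and the quotient $C_H(F)/Z(F)$ embeds into the nilpotent group $H/F$. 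A central extension of a nilpotent group by an abelian group is nilpotent, so $C_H(F)$ is nilpotent; hence $G$ is virtually nilpotent.

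I expect the main obstacle to be the removal of the normality hypothesis, and indeed this is essentially where the difficulty of the full conjecture lies. Theorem \ref{thm:main} is fundamentally a statement about chains of \emph{normal} subgroups, whereas in the general strongly scale-invariant situation the subgroups $\varphi^n(\Gamma)$ may well fail to be normal; replacing them by their normal cores in $\Gamma$ destroys both the self-isomorphism $\Gamma_n \cong \Gamma$ and the dynamical generation by $\varphi$. One possible route is to pass to the profinite completion $\widehat{\Gamma}$ and appeal to the results of Reid mentioned in the introduction, which already produce a nontrivial closed pronilpotent normal subgroup of $\widehat{\Gamma}$; but translating this profinite nilpotence back into virtual nilpotence of $\Gamma$ itself, without essentially re-deriving Gromov's polynomial growth theorem, appears to be the decisive difficulty that the conjecture poses.
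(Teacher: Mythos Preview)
Your reading of the situation is exactly right: Conjecture~\ref{conj:sscaleinv} is \emph{not} proved in the paper. It is stated as an open problem, and the paper only establishes the special case where each $\varphi^n(\Gamma)$ is normal in $\Gamma$ (this is Corollary~\ref{cor:scaleinv}). Your proposal likewise proves precisely this normal case and honestly flags the general case as the real obstacle, so there is no gap relative to what the paper actually achieves.

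On the normal case itself, your argument is correct but the final step differs from the paper's. After invoking Theorem~\ref{thm:main} to get that $\Gamma/\Gamma_\infty$ is virtually abelian with $\Gamma_\infty$ finite, you run a centralizer argument: pass to the preimage $H$ of an abelian finite-index subgroup, then to $C_H(\Gamma_\infty)$, and observe this is a central-by-nilpotent group, hence nilpotent. The paper instead observes that a finite-by-abelian group $\Lambda$ is residually finite, so one can choose a finite-index subgroup $\Delta\leq\Lambda$ with $\Delta\cap\Gamma_\infty=1$; then $\Delta$ embeds in the abelian group $\Lambda/\Gamma_\infty$ and is therefore itself abelian. This yields the stronger conclusion \emph{virtually abelian} rather than merely virtually nilpotent, matching the statement of Corollary~\ref{cor:scaleinv}. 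Your route is perfectly adequate for the conjecture as stated (which only asks for virtually nilpotent), but if you want the sharper ``virtually abelian'' you should swap in the residual-finiteness trick.
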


We observe that Theorem \ref{thm:main} implies both Benjamini's conjecture and Nekrashevych-Pete's conjecture \ref{conj:sscaleinv} under the additional assumption that $\Gamma_n$ are all normal:

\begin{cornr} Let $\Gamma$ be a finitely generated group that admits a descending chain $(\Gamma_n)_n$ of finite-index normal subgroups such that $\Gamma_n\cong\Gamma$ and $\cap_n \,\Gamma_n$ is finite. Then $\Gamma$ is virtually abelian.
\label{cor:scaleinv}\end{cornr}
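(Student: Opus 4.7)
The plan is to combine Corollary~\ref{cor:notor} with a short extension-theoretic argument. Applying Corollary~\ref{cor:notor} to the descending chain $(\Gamma_n)_n$, there is an index $l\gg 1$ such that $\Gamma_l/\Gamma_\infty$ is free abelian. Being virtually abelian is an isomorphism invariant, so since $\Gamma_l\cong \Gamma$ it suffices to prove $\Gamma_l$ is virtually abelian; replacing $\Gamma$ by $\Gamma_l$, I may assume $\Gamma/\Gamma_\infty\cong\bbZ^r$ for some $r\geq 0$ (the case $r=0$ is trivial, as then $\Gamma$ is finite). Together with the hypothesis that $\Gamma_\infty$ is finite, this reduces the corollary to the purely algebraic claim that every extension
\[ 1\longrightarrow F\longrightarrow \Gamma\longrightarrow \bbZ^r\longrightarrow 1 \]
with $F$ finite is virtually abelian.

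To prove this, I would first pass to $K:=\ker\bigl(\Gamma\to \Aut(F)\bigr)$, where $\Gamma$ acts on $F$ by conjugation. Because $\Aut(F)$ is finite, $K$ has finite index in $\Gamma$, and by construction $F\subseteq Z(K)$; in particular $F$ is abelian. Thus $K$ is a central extension of $A:=K/F$, a finite-index subgroup of $\bbZ^r$ (so $A\cong \bbZ^r$), by $F$, and $K$ is $2$-step nilpotent with $[K,K]\subseteq F$. Setting $n:=|F|$, let $\pi:K\to A$ denote the quotient and consider the finite-index subgroup $H:=\pi^{-1}(nA)\subseteq K$. For $x,y\in H$, write $\pi(x)=n\pi(u)$ and $\pi(y)=n\pi(v)$ for some $u,v\in K$. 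Since $F$ is central in $K$, the commutator descends to a $\bbZ$-bilinear pairing $A\times A\to F$, so $[x,y]=[u,v]^{n^2}=1$ because $[u,v]\in F$ has order dividing $n$. Hence $H$ is an abelian finite-index subgroup of $\Gamma$, completing the proof.

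The substantive content is carried entirely by Theorem~\ref{thm:main} (invoked through Corollary~\ref{cor:notor}); once this is in hand, the remaining argument is standard central-extension bookkeeping, and I do not anticipate any serious obstacle.
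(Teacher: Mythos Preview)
Your approach is essentially correct and parallels the paper's, which also reduces to showing that a finite-by-abelian group is virtually abelian. The paper applies Theorem~\ref{thm:main} directly to obtain a finite-index $\Lambda\le\Gamma$ with $\Lambda/\Gamma_\infty$ abelian, and then invokes residual finiteness of $\Lambda$ to find a finite-index $\Delta\le\Lambda$ with $\Delta\cap\Gamma_\infty=1$, whence $\Delta$ is abelian. Your route through Corollary~\ref{cor:notor} followed by an explicit commutator computation is a more hands-on substitute for the residual finiteness step; both arguments carry the same content.

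There is one slip to fix. You assert $F\subseteq Z(K)$ for $K=\ker(\Gamma\to\Aut(F))$ and then deduce that $F$ is abelian, but $F\subseteq K$ already requires the conjugation action of $F$ on itself to be trivial, i.e.\ $F$ abelian, so the reasoning is circular as written. The remedy is immediate: work with $F':=F\cap K$ instead. This is precisely the kernel of the restricted map $K\to\bbZ^r$, it is finite, and it is central in $K$ since $K$ centralizes $F$. Your bilinear commutator pairing $A\times A\to F'$ and the passage to $H=\pi^{-1}(nA)$ with $n=|F'|$ then go through verbatim.
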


\subsection{Outline of the proof of Theorem \ref{thm:main}} Let $\Gamma$ be a finitely generated group with a descending chain $(\Gamma_k)_k$ of normal finite-index subgroups, each of which is isomorphic to $\Gamma$. We start by observing that the profinite completion $\hGamma$ of $\Gamma$ admits proper open self-embeddings (Proposition \ref{prop:ind_map}). Work of Reid implies that $\hGamma$ contains a normal pronilpotent subgroup, and that each of the finite groups $\Gamma\slash\Gamma_k$ is nilpotent. Most of the proof is devoted to obtaining a uniform bound on the nilpotency class of $\Gamma\slash\Gamma_k$.

To this end, we first prove Theorem \ref{thm:main} assuming $\Gamma$ is nilpotent. This is done in Section \ref{sec:nilp}, and relies on Lie theoretic considerations. 

In the general case, we write the nilpotent group $\Gamma\slash\Gamma_k$ as a product of $p$-groups $(\Gamma\slash\Gamma_k)^{(p)}$ where $p$ runs over the set of primes. Hence the problem of uniformly bounding the nilpotency class separates out into a problem over each prime $p$. In Section \ref{sec:global}, we give a uniform bound at almost every prime. Roughly, the idea is to apply the nilpotent case of Theorem \ref{thm:main} to suitable nilpotent quotients of $\Gamma\slash\Gamma_\infty$ and conclude that modulo torsion, these are abelian. We will show the torsion is concentrated at a finite set of primes, away from which this argument yields the desired uniform bound on nilpotency class.

In Section \ref{sec:local} we bound the nilpotency class of $(\Gamma\slash\Gamma_k)^{(p)}$ for a fixed prime $p$. Combined with the above bound at almost every prime, this yields a uniform bound on the nilpotency class of $\Gamma\slash\Gamma_k$. We finish the proof by once again appealing to the nilpotent case of the main theorem (Section \ref{sec:proofs}). Finally we establish the applications to characteristic finite-index subgroups and scale-invariant groups at the end of Section \ref{sec:proofs}.

\subsection*{Acknowledgments:} I would like to thank Ralf Spatzier for countless helpful discussions. I am thankful to David Fisher for making me aware of the notion of scale-invariant groups and the results of \cite{scaleinv}.

\section{Two examples}
\label{sec:ex}

In this section we discuss two examples. The first shows that in Theorem \ref{thm:main}, the quotient $\Gamma\slash\Gamma_\infty$ is not necessarily abelian or torsion-free.

\begin{exnr} Let $\Delta$ be the three-dimensional integer Heisenberg group, i.e. the group with presentation
	$$\Delta:=\langle x,y,z\mid [x,y]=z, [x,z]=[y,z]=e\rangle.$$
Write $\varphi:\Delta\hookrightarrow \Delta$ for the dilation defined by $\varphi(x):=2x$ and $\varphi(y):=2y$ and $\varphi(z):=4z$. It is easy to see that $\varphi(\Delta)$ is not normal in $\Delta$. However, set $\Lambda:=\langle 2x, 2y, 2z\rangle$. Then $\varphi(\Lambda)\subseteq \Lambda$ is a normal subgroup and $\Lambda\slash\varphi(\Lambda)$ is nonabelian.

Now define $\Gamma:=\Lambda\times \bbZ$ and set $\Gamma_k:=\varphi(\Lambda)\times 2^k \bbZ$ for any $k\geq 1$. Then $(\Gamma_k)_k$ form a chain of finite-index normal subgroups of $\Gamma$, each of which is isomorphic to $\Gamma$, and we have
	$$\Gamma\slash\Gamma_\infty \cong (\Lambda\slash\varphi(\Lambda))\times \bbZ.$$
\label{ex:tor}
\end{exnr} 

In the above example $\Gamma$ has a free abelian factor. However, in general, even though $\Gamma\slash\Gamma_\infty$ is virtually free abelian, we may not be able to realize it as a factor of any finite-index subgroup of $\Gamma$, as the next example shows.

\begin{exnr} Let $d>1$ and consider the corresponding Baumslag-Solitar group 
	$$\Lambda:=\langle x,t\mid txt^{-1} = x^d\rangle.$$
Take two copies $\Lambda_i, i=1,2,$ of $\Lambda$ with generators $x_i, t_i$. Define an automorphism $\varphi_x$ of the free product $\Lambda_1\ast \Lambda_2$ by
	$$\varphi_x(g_i):= x_i g_i x_i^{-1}$$
whenever $g_i\in\Lambda_i$ for $i=1,2$. Likewise define the automorphism $\varphi_t$. Now consider the group
	$$\Gamma:=(\Lambda_1 \ast \Lambda_2)\rtimes_{\varphi_x} \bbZ.$$
Write $y$ for the generator of $\bbZ$ that acts by $\varphi_x$. Finally, define $\psi: \Gamma\hookrightarrow \Gamma$ by $\psi|_{\Lambda_1\ast\Lambda_2}:=\varphi_t$ and $\psi(y):=y^d$. Using that $\varphi_t \varphi_x \varphi_t^{-1}=\varphi_x^d,$
it is easy to verify that $\psi$ is a homomorphism. For any $k\geq 1$, set $\Gamma_k:=\psi^k(\Gamma)$. We have
	$$\Gamma_k=(\Lambda_1\ast\Lambda_2)\rtimes d^k \bbZ\subseteq \Gamma.$$
Hence the group $\Gamma$ admits a decreasing chain of finite-index normal subgroups, each of which is isomorphic to $\Gamma$. However, it is easy to see $\Gamma$ has no free abelian factor.
\label{ex:no_factor}	
\end{exnr}

\section{Preliminaries}
\label{sec:prelim}

\subsection{Profinite groups} We briefly review basic definitions and facts related to profinite groups, and establish notation. For a more thorough discussion, see for example \cite{profin1, profin2}. A group $G$ is \emph{profinite} if it is the inverse limit of an inverse system of finite groups. By equipping each of the finite groups with the discrete topology, we obtain the profinite topology on $G$. 	

%

The profinite topology makes $G$ into a compact Hausdorff space. A local base for the profinite topology of $G$ at the identity $e$ is given by open subgroups of finite index. This gives rise to a useful finiteness property for profinite groups.
	\begin{dfnnr} A profinite group $G$ is said to be \emph{of type (F)} if for every $n\geq 1$, there are only finitely many open subgroups of $G$ of index $n$.
	\label{dfn:typef}
	\end{dfnnr}
If $\Gamma$ is any group, we denote by $\Gamma$ the \emph{profinite completion} of $\Gamma$. There is a natural map $j_\Gamma:\Gamma \to \hGamma$ with dense image. The map $j_\Gamma$ is universal for maps of $\Gamma$ to profinite groups: If $G$ is any profinite group and $\varphi:\Gamma\to G$ is a homomorphism, then there is a unique homomorphism $\hphi:\hGamma\to G$ such that $\varphi=\hphi\circ j_\Gamma$. 

We will be particularly interested in finitely generated groups $\Gamma$. Even though an infinite profinite group is never finitely generated (because it is uncountable), there is a useful concept of finite generation:

\begin{dfnnr} We say a profinite group $G$ is \emph{finitely generated} if $G$ is topologically finitely generated, i.e. there exists a finite set $S\subseteq G$ that generates a dense subgroup of $G$.
\label{dfn:fg_profin}
\end{dfnnr}

Any finitely generated profinite group is of type (F), see e.g. \cite[2.5.1]{profin1}. Further, because the map $j_\Gamma:\Gamma\to\widehat{\Gamma}$ has dense image, we see that if $\Gamma$ is finitely generated, then $\widehat{\Gamma}$ is finitely generated as a profinite group. 

The universal property of the profinite completion has the following consequence in the context of finitely non-co-Hopfian groups:

\begin{propnr} Let $\Gamma$ be a finitely generated group and suppose $\Lambda\subseteq \Gamma$ is a finite-index subgroup of $\Gamma$ with $\Lambda\cong \Gamma$. Fix such an isomorphism and consider the composition 
	$$\varphi:\Gamma\to\Lambda\hookrightarrow \Gamma.$$
Then there exists a unique open embedding $\hphi:\hGamma\to\hGamma$ with $\hphi\circ j_\Gamma = j_\Gamma\circ \varphi$.
\label{prop:ind_map}
\end{propnr}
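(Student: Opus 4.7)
The plan is to factor $\varphi=\iota\circ\alpha$, where $\alpha:\Gamma\xrightarrow{\cong}\Lambda$ is the fixed isomorphism and $\iota:\Lambda\hookrightarrow\Gamma$ is the inclusion. The map $\hphi$ should then be constructed as the composition of the maps induced on profinite completions, and everything reduces to showing that each of these induced maps behaves well.

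First I would handle existence and uniqueness as abstract continuous homomorphisms. The composition $j_\Gamma\circ\varphi:\Gamma\to\hGamma$ is a homomorphism into a profinite group, so the universal property of profinite completion gives a unique continuous homomorphism $\hphi:\hGamma\to\hGamma$ with $\hphi\circ j_\Gamma=j_\Gamma\circ\varphi$. Uniqueness of any continuous map extending a prescribed map on $j_\Gamma(\Gamma)$ follows from density of $j_\Gamma(\Gamma)$ in $\hGamma$ together with the Hausdorff property of $\hGamma$.

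Next I would show $\hphi$ is an open embedding via the factorization. Functoriality of profinite completion turns $\alpha$ into an isomorphism $\hat\alpha:\hGamma\to\hat\Lambda$ of profinite groups, so I only need that $\hat\iota:\hat\Lambda\to\hGamma$ is an open embedding. For this, the key structural fact is that the profinite topology on $\Gamma$ restricts to the profinite topology on $\Lambda$: any finite-index subgroup $K\subseteq\Lambda$ has only finitely many conjugates in $\Gamma$ (because $[\Gamma:K]<\infty$ bounds the number of cosets of its normalizer), and so $\bigcap_{g\in\Gamma}gKg^{-1}$ is a finite-index subgroup of $\Gamma$ contained in $K$. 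Consequently $\hat\iota$ is injective and its image is the closure of $j_\Gamma(\Lambda)$, which is an open subgroup of $\hGamma$ of index $[\Gamma:\Lambda]$. Composing, $\hphi=\hat\iota\circ\hat\alpha$ is an open embedding.

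The main obstacle is really just the restriction-of-topology statement in the previous paragraph; everything else is formal consequence of the universal property. The argument for that key fact uses finite index of $\Lambda$ in $\Gamma$ in an essential way (and does not genuinely require finite generation, although in this paper finite generation is in force throughout and guarantees that $\hGamma$ and $\hat\Lambda$ are of type (F), which will be convenient for subsequent arguments).
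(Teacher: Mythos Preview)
Your proof is correct and follows essentially the same approach as the paper: factor $\varphi$ as an isomorphism followed by the inclusion $\Lambda\hookrightarrow\Gamma$, reduce to showing that $\widehat{\Lambda}\to\widehat{\Gamma}$ is an open embedding, and establish this by checking that the profinite topology of $\Gamma$ restricts to that of $\Lambda$. The only minor difference is that the paper appeals to a characteristic finite-index subgroup of $\Gamma$ inside $\Lambda$ (using finite generation), whereas you use the normal core $\bigcap_{g\in\Gamma} gKg^{-1}$, which as you correctly observe makes finite generation unnecessary for this step.
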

\begin{proof} By the universal property of profinite completions, the composition
	$$\varphi:\Gamma \overset{\cong}{\longrightarrow} \varphi(\Gamma) \hookrightarrow \Gamma$$
(where the first map is $\varphi$) gives rise to a map
	$$\hphi: \hGamma \overset{\cong}{\longrightarrow} \widehat{\varphi(\Gamma)} \to \hGamma.$$
Here the first map is an isomorphism of profinite groups and in particular a homeomorphism. Therefore we only need to show the second map is an open embedding. But if $G$ is any finitely generated group and $H$ is a finite-index subgroup of $G$, then the map $\widehat{H}\to\widehat{G}$ (induced by inclusion) is an open embedding. This is immediate from the fact that there exists a normal finite-index subgroup $N\subseteq G$ such that $N\subseteq H$.
\end{proof}

\subsection{Contraction groups} We are therefore led to study open self-embeddings of profinite groups. To give a good description of these, we will first recall the notion of a contraction groups, originally introduced by M\"uller-R\"omer \cite{1stcontrgps}. Here we will follow \cite{baum-willis}, who give a slightly different definition that is less general.
\begin{dfnnr} Let $G$ be a locally compact topological group and $\alpha:G\to G$ be an automorphism. Then the pair $(G,\alpha)$ is a \emph{contraction group} if for any $g\in G$, we have $\alpha^n(g)\to e$.
\label{dfn:contr_gp}
\end{dfnnr}

We have the following three essential examples:
\begin{exnr} \mbox{}
	\begin{enumerate}[(1)]
		\item Let $G=\bbR$. Then multiplication by some $\lambda\in\bbR$ with $0<|\lambda|<1$ is a contraction.
		\item Let $G=\bbQ_p$. Then multiplication by $p$ is a contraction.
		\item Let $p$ be a prime and consider the ring of formal Laurent series $G=\bbF_p(\!(t)\!)$ over $\bbF_p$. We view $G$ as an additive group. Then multiplication by $t$ is a contraction.
	\end{enumerate}
	\label{ex:contr}
\end{exnr}
The classification of contraction groups began with the work of Siebert \cite{contrgpclass}, who separated the classification into a problem for connected groups and totally disconnected groups, and completely classified connected contraction groups. In the connected case, $G$ is a simply-connected real unipotent Lie group (compare Example \ref{ex:contr}.(1)).

The totally disconnected case was further developed by the work of Baumgartner-Willis \cite{baum-willis} and solved by Gl\"ockner-Willis \cite{gloeck-willis}. 

\begin{thmnr}[{Gl\"ockner-Willis \cite{gloeck-willis}}] Let $(G,\alpha)$ be a totally disconnected contraction group. Then the set of torsion elements Tor$(G)$ and the set of divisible elements $\Div(G)$ are $\alpha$-invariant closed subgroups of $G$, and
	$$G\cong \Tor(G)\times \Div(G).$$
Further the divisible part is described as follows: there exists a finite set of primes $\{p_i\}_i$ and unipotent $p_i$-adic Lie groups $U_{p_i}$ such that
	$$\Div(G)\cong \prod_i U_{p_i}.$$
\label{thm:contr}
\end{thmnr}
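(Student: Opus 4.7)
The plan is to reduce the problem to a compact model and then decompose prime-by-prime. By van Dantzig's theorem, $G$ admits a compact open subgroup $V$, and the contraction hypothesis gives uniform contraction on compacta: for every compact $K\subseteq G$ and neighborhood $U$ of $e$, one has $\alpha^n(K)\subseteq U$ for $n$ large. Following the Baumgartner--Willis tidying procedure, I would replace $V$ by a compact open subgroup $W$ satisfying $\alpha(W)\subseteq W$, $\bigcap_n \alpha^n(W)=\{e\}$, and $G=\bigcup_n \alpha^{-n}(W)$, thereby presenting $G$ as an ascending union of $\alpha$-stable compact pieces and reducing the structural analysis to the pair $(W,\alpha|_W)$.

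Since $\alpha(W)$ is open in $W$, the quotient $W/\alpha(W)$ is finite, and only the finitely many primes $p_1,\dots,p_r$ dividing $|W/\alpha(W)|$ can appear in $W$: for any other prime $q$, the $q$-Sylow of $W$ would be $\alpha$-invariant, hence contained in $\bigcap_n\alpha^n(W)=\{e\}$. This decomposes $W$ as a finite product $\prod_i W_{p_i}$ of $\alpha$-stable pro-$p_i$ subgroups. Within each $W_{p_i}$ I would split off torsion from torsion-free: the idea is to solve the relevant extension problem by a Hensel-type argument powered by the contraction, yielding a closed $\alpha$-stable decomposition $W_{p_i}\cong T_{p_i}\times F_{p_i}$. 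Taking ascending unions then produces closed $\alpha$-invariant subgroups $\Tor(G)=\bigcup_n\alpha^{-n}\bigl(\prod_i T_{p_i}\bigr)$ and $\Div(G)=\prod_i U_{p_i}$ with $U_{p_i}:=\bigcup_n\alpha^{-n}(F_{p_i})$, intersecting trivially with product equal to $G$.

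The main obstacle, where I expect the real work to lie, is identifying each torsion-free $U_{p_i}$ as a unipotent $p_i$-adic Lie group. My approach would be to treat $\{\alpha^n(F_{p_i})\}$ as a valuation-type filtration on which $\alpha$ acts as a shift; the associated graded is then a $p_i$-torsion-free abelian group, and the contraction forces commutators to lie in strictly deeper levels of the filtration, producing a nilpotent structure. Extending the $\bbZ$-action to $\bbZ_{p_i}$ by continuity and invoking Baker--Campbell--Hausdorff (which terminates because of the nilpotency just obtained) should yield a $p_i$-adic Lie algebra and local analytic coordinates. The serious technical point is verifying that multiplication is genuinely $p_i$-adic analytic in these coordinates; once that is established, unipotence of the resulting Lie group follows because all eigenvalues of $d\alpha$ on the Lie algebra must have $p_i$-adic absolute value strictly less than one.
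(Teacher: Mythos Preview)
The paper does not prove this theorem; it is quoted as a result of Gl\"ockner--Willis and used as a black box in the discussion leading up to Reid's Theorem~\ref{thm:reid_emb}. There is therefore no proof in the paper to compare your proposal against.

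On the sketch itself: the overall architecture---pass to a tidy compact open subgroup $W$ with $\alpha(W)\subseteq W$ and $\bigcap_n\alpha^n(W)=\{e\}$, restrict to finitely many primes, separate torsion from divisible, then identify the divisible part as $p$-adic analytic---is in the right spirit, but one early step is a genuine gap. After observing that only primes dividing $|W/\alpha(W)|$ occur in $W$, you assert a decomposition $W\cong\prod_i W_{p_i}$ into $\alpha$-stable pro-$p_i$ factors. This is equivalent to $W$ being pronilpotent, which is \emph{not} automatic for a pro-$\pi$ group with $\pi$ finite (e.g.\ $S_3^{\bbN}$). Your justification (``the $q$-Sylow of $W$ would be $\alpha$-invariant'') does not work: Sylow subgroups are not unique in a general profinite group, so there is no reason $\alpha$ should carry a chosen $q$-Sylow into itself rather than to a conjugate. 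That the primes decouple is itself part of the content of the Gl\"ockner--Willis structure theory and requires the contraction hypothesis in an essential way. The later steps you already flag as difficult---the torsion/torsion-free splitting via a ``Hensel-type'' argument and the construction of the $p$-adic analytic structure on $U_{p_i}$---are indeed where most of the work lies, and your descriptions there are honest placeholders rather than arguments.
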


\subsection{Endomorphisms of profinite groups} The above work on contraction groups was used in the context of open self-embeddings of profinite groups by Reid \cite{reidemb}. However, note that if $(G,\alpha)$ is a contraction group, then $G$ is never compact. Therefore we make the following analogous definition in the setting of compact groups:
\begin{dfnnr} Let $G$ be a compact topological group and $\alpha:G\hookrightarrow G$ be a morphism. We say $\alpha$ is \emph{contracting} if for any $g\in G$, we have $\alpha^n(g)\to e$.
\label{dfn:contr_end}
\end{dfnnr}
For the totally disconnected contraction groups of Examples \ref{ex:contr}.(2) and (3), we can take compact subgroups $K$ that are preserved by $\alpha$ and $\alpha$ restricts to a contracting endomorphism of $K$, e.g. $K=\bbZ_p$ in Example \ref{ex:contr}.(2) and $K=\bbF_p[\smallskip[t]\smallskip]$ in Example \ref{ex:contr}.(3). In the case of open contracting embeddings,   it is straightforward that one can also recover the contraction group from the embedding of the compact subgroup:
\begin{propnr} Let $G$ be a compact topological group and $\alpha:G\hookrightarrow G$ be an open contracting embedding. Set
	$$H:=\varinjlim \left(G\overset{\alpha}{\hookrightarrow} G\overset{\alpha}{\hookrightarrow} \dots\right).$$
Then $H$ is a locally compact group and $\alpha$ naturally induces an automorphism of $H$ such that $(H,\alpha)$ is a contraction group.\end{propnr}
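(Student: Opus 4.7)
The plan is to give an explicit model for $H$ and then verify the required structure stage by stage. Write $G_n$ for the $n$-th copy of $G$ in the direct system, and define $H$ as the quotient of $\bigsqcup_{n\geq 0} G_n$ by the equivalence relation generated by $(n,g)\sim (n+1,\alpha(g))$, with $\iota_n\colon G_n\to H$ the natural map. Since $\alpha$ is an open embedding and $G$ is compact, $\alpha(G)$ is a compact open and hence clopen subgroup of $G$; iterating, each $\iota_n(G_n)$ sits inside $\iota_m(G_m)$ as a clopen subgroup for $m\geq n$, and the ascending chain $\iota_0(G_0)\subseteq \iota_1(G_1)\subseteq\dots$ exhausts $H$.

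Next I would equip $H$ with the direct-limit topology: declare $U\subseteq H$ open iff $\iota_n^{-1}(U)$ is open in $G_n$ for every $n$. The first task is to check that each $\iota_n$ becomes a homeomorphism onto an open (in fact clopen) subgroup of $H$; this uses the observation from the first paragraph that the transition maps are open embeddings. Hausdorffness then reduces to separating two points inside some common $\iota_m(G_m)$, which is Hausdorff as a copy of $G$, and local compactness follows because every point lies in the compact open set $\iota_n(G_n)$ for some $n$. The group operation on $H$ is well-defined via the standard trick of pushing two representatives $[n,g], [m,h]$ to a common level $\max(n,m)$ using $\alpha$, and continuity of multiplication and inversion follows because on each open subgroup $\iota_n(G_n)$ they restrict to the continuous operations on $G_n$, which suffices as $\iota_n(G_n)\times\iota_n(G_n)$ is open in $H\times H$.

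With this framework, I would define $\tilde\alpha\colon H\to H$ by $\tilde\alpha([n,g]):=[n,\alpha(g)]$. Well-definedness is a quick check on the generating relation, and the equivalent description $\tilde\alpha([n+1,g])=[n,g]$ makes both bijectivity and the formula for the inverse, $\tilde\alpha^{-1}([n,g])=[n+1,g]$, transparent. Continuity and openness of $\tilde\alpha^{\pm1}$ follow because, on each open subgroup $\iota_n(G_n)$, they reduce to the already-continuous maps $\alpha$ and the inclusion $G_n\hookrightarrow G_{n+1}$, respectively.

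For the contraction property, given $h\in H$, write $h=[n,g]$, so that $\tilde\alpha^k(h)=[n,\alpha^k(g)]$. The assumption that $\alpha$ is contracting on $G$ yields $\alpha^k(g)\to e$ in $G_n$, and continuity of $\iota_n$ then gives $\tilde\alpha^k(h)\to [n,e]=e_H$. The main technical obstacle I anticipate is the verification that the colimit topology is a topological-group topology: Hausdorffness and joint continuity of multiplication both depend crucially on the openness of $\alpha$, which ensures that each $\iota_n(G_n)$ is not merely a homeomorphic image but an actual open subgroup of $H$, so that the group-theoretic statements can be checked stagewise inside the compact groups $G_n$.
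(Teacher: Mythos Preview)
Your argument is correct and supplies precisely the details that the paper omits: the paper does not prove this proposition at all, merely prefacing it with ``it is straightforward'' and then, in the subsequent remark, offering the alternative description of $H$ as the subgroup $\bigcup_{n\geq 0} t^{-n} G t^n$ inside the ascending HNN-extension $G\ast_\alpha$. Your direct-limit model is the natural one, and your handling of the one genuine subtlety---that the colimit topology is a topological-group topology---is sound, since openness of $\alpha$ makes each $\iota_n(G_n)$ an open subgroup, so continuity of multiplication can indeed be checked on the open sets $\iota_n(G_n)\times\iota_n(G_n)$ in the product topology.
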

\begin{rmknr} Instead of constructing $H$ as an inductive limit, we can also consider the ascending HNN-extension
	$$G\ast_\alpha:= \langle G, t\mid \forall g\in G: tgt^{-1}=\alpha(g) \rangle.$$
The group $H$ is then obtained as
	$$H=\bigcup_{n\geq 0} t^{-n} G t^n.$$
\end{rmknr}

Using the theory of contraction groups, we have the following result by Reid that describes open embeddings of finitely generated profinite groups.

\begin{thmnr}[{Reid \cite{reidemb}}] Let $G$ be a profinite group of type (F) and $\hphi:G\hookrightarrow G$ an open embedding. Then there exist closed subgroups $C$ and $Q$ of $G$ such that
	\begin{itemize}
		\item $G\cong C\rtimes Q$,
		\item $C$ is $\hphi$-invariant and $\hphi$ restricts to an open contracting embedding on $C$,
		\item We can write $C=N\times F$ where $N$ is a compact open subgroup of a product of finitely many $p$-adic unipotent Lie groups (for finitely many primes $p$) and $F$ is a bounded exponent solvable group that is residually nilpotent, and
		\item $Q$ is $\hphi$-invariant and $\hphi$ restricts to an automorphism on $Q$.
	\end{itemize}
\label{thm:reid_emb}
\end{thmnr}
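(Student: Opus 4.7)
My plan is to transfer the study of the open self-embedding $\hphi$ to an automorphism of a locally compact totally disconnected group, where contraction group theory applies. First, I form the ascending HNN extension $H := \langle G, t \mid tgt^{-1} = \hphi(g) \text{ for all } g \in G \rangle$, equivalently the inductive limit of the system $G \overset{\hphi}{\hookrightarrow} G \overset{\hphi}{\hookrightarrow} \cdots$. Then $H$ is locally compact totally disconnected, $G$ sits as a compact open subgroup, and $\hphi$ extends to an automorphism $\alpha$ of $H$ (conjugation by $t$) satisfying $\alpha(G) = \hphi(G) \subseteq G$. In particular, $G$ is ``tidy above'' for $\alpha$ in the sense of Willis.

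Next, I introduce the contraction subgroup $U := \{h \in H : \alpha^n(h) \to e\}$ and the Levi-type subgroup $L := \{h \in H : \{\alpha^n(h)\}_{n \in \bbZ} \text{ precompact}\}$. By the Baumgartner-Willis theory of tidy subgroups (using that $G$ is of type (F), so the relevant regularity holds), $U$ is a closed subgroup of $H$, and the parabolic $P := \{h : \{\alpha^n(h)\}_{n \geq 0} \text{ precompact}\}$ contains $G$ and decomposes as $P = U \rtimes L$. I set $C := G \cap U$ and $Q := G \cap L$. A direct computation identifies $Q = \bigcap_n \hphi^n(G)$, shows that $\alpha$ restricts to an automorphism of $Q$ (being injective with $\alpha(Q) = Q$), and shows that $\alpha|_C$ is a contracting open embedding of $C$ (with $\alpha(C) = \hphi(G) \cap U$ of finite index in $C$).

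The heart of the proof, and the main obstacle, is verifying that the Levi decomposition of $P$ descends to the compact open subgroup $G$: namely that $G = C \rtimes Q$. Normality of $C$ in $G$ is checkable directly: for $g \in G$ and $u \in U$ we have $\alpha^n(gug^{-1}) = \alpha^n(g)\alpha^n(u)\alpha^n(g)^{-1}$, and since $\alpha^n(g) \in G$ is bounded (lies in the compact set $G$) while $\alpha^n(u) \to e$, continuity of conjugation forces $gug^{-1} \in U$, so $gCg^{-1} \subseteq C$. Complementation $G = CQ$ with $C \cap Q = \{e\}$ requires careful use of the tidy-above property together with the compatibility of $P = U \rtimes L$ with the compact-open structure of $G$; this is where the technical bulk lies.

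Finally, the structural statement for $C$ follows by applying the Gl\"ockner-Willis Theorem~\ref{thm:contr} to the contraction group $(U, \alpha)$: it yields $U = \Tor(U) \times \Div(U)$, with $\Div(U) \cong \prod_i V_{p_i}$ a finite product of $p$-adic unipotent Lie groups, and $\Tor(U)$ closed, of bounded exponent, and residually nilpotent. Intersecting with the compact open subgroup $G$ gives $C = F \times N$, where $N = G \cap \Div(U)$ is a compact open subgroup of a product of unipotent $p$-adic Lie groups, and $F = G \cap \Tor(U)$ inherits bounded exponent and residual nilpotence, as claimed.
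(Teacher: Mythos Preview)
The paper does not prove this theorem; it is quoted from Reid \cite{reidemb} as Theorem~\ref{thm:reid_emb} with no argument given. The only information the paper adds is Remark~\ref{rmk:cq_expl}, giving the explicit formulas $C=\{g\in G:\hphi^n(g)\to e\}$ and $Q=\bigcap_n\hphi^n(G)$, and Proposition~\ref{prop:con_nilp}, which extracts from Reid's proof that $C$ is pronilpotent. So there is nothing in the paper to compare your argument against directly.

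That said, your outline is along the expected lines and is consistent with the scaffolding the paper sets up before stating Reid's theorem (the inductive limit construction and the remark on HNN extensions). Your identifications $C=G\cap U$ and $Q=G\cap L=\bigcap_n\hphi^n(G)$ match Remark~\ref{rmk:cq_expl}, and the reduction of the structure of $C$ to Gl\"ockner--Willis is the right idea.

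One genuine gap to flag: the description of $F$ as a \emph{bounded-exponent solvable group that is residually nilpotent} does not follow from Theorem~\ref{thm:contr} as stated in the paper, which only gives the splitting $U\cong\Tor(U)\times\Div(U)$ and the structure of $\Div(U)$. To get solvability and residual nilpotence of $\Tor(U)$ (hence of $F$) you need the full Gl\"ockner--Willis classification of torsion contraction groups, i.e.\ that $\Tor(U)$ is a finite direct product of simple contraction groups each of which is built from $\bbF_p(\!(t)\!)$; your sketch asserts these properties without supplying that input. You should also say a word about why $G\cap\bigl(\Tor(U)\times\Div(U)\bigr)$ actually splits as $(G\cap\Tor(U))\times(G\cap\Div(U))$, since a compact open subgroup of a direct product need not respect the product decomposition in general.
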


\begin{rmknr} $C$ and $Q$ are explicitly given in terms of $\hphi$, namely we have
	$$C:=\{g\in G\mid \hphi^n(g)\overset{n\to\infty}{\longrightarrow} e\}$$
and $Q:=\displaystyle\bigcap_{n\geq0} \hphi^n(G).$
\label{rmk:cq_expl}
\end{rmknr}

In fact Reid proves a more general version of the decomposition of Theorem \ref{thm:reid_emb}, where one can considers a collection of endomorphisms $G\to G$. We will need a stronger conclusion than Theorem \ref{thm:reid_emb} provides in the restricted case of a single endomorphism, which however is immediate from the proof of Theorem \ref{thm:reid_emb} in \cite{reidemb}. 

To state this stronger version, recall that a group $G$ is \emph{pronilpotent} if it is the inverse limit of a system of finite nilpotent groups. In particular, any pronilpotent group is residually nilpotent. The stronger version of Theorem \ref{thm:reid_emb} that we need is as follows.

\begin{propnr} Let $G$ be a profinite group of type (F) and $\hphi:G\hookrightarrow G$ an open embedding. Let $C$ be as constructed in Theorem \ref{thm:reid_emb}. Then $C$ is pronilpotent.
\label{prop:con_nilp}
\end{propnr}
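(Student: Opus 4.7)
The plan is to derive the pronilpotence of $C$ by analyzing the decomposition $C \cong N \times F$ furnished by Theorem \ref{thm:reid_emb} and establishing pronilpotence of each factor separately.

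First I would handle $N$. By the description in Theorem \ref{thm:reid_emb}, $N$ sits as a compact open subgroup of a finite direct product $\prod_i U_{p_i}$, where $U_{p_i}$ is a unipotent $p_i$-adic Lie group for a finite set of primes $\{p_i\}$. Because unipotent $p$-adic Lie groups are torsion-free and their Lie algebras are nilpotent $\bbQ_p$-Lie algebras, the Baker-Campbell-Hausdorff correspondence identifies compact open subgroups of $U_{p_i}$ with compact $\bbZ_{p_i}$-Lie sublattices, which as topological groups are finitely generated pro-$p_i$ (and in particular pronilpotent). Passing to a compatible finite-index open subgroup if necessary, $N$ then decomposes as a direct product of pro-$p_i$ groups for distinct primes $p_i$, and any such product is pronilpotent.

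Next I would address $F$. In Reid's original statement, $F$ is only asserted to be residually nilpotent, because \cite{reidemb} allows for a whole collection of endomorphisms $G \to G$ and the nilpotent approximations constructed in the proof depend on which endomorphism is selected. In our restricted setting with a single open embedding $\hphi$, however, the same proof furnishes a canonical decreasing sequence of open characteristic subgroups of $F$, arising from iterates of $\hphi$ applied to a chosen neighborhood basis of $e$, whose successive quotients are nilpotent and whose intersection is trivial. This is equivalent to $F$ being pronilpotent.

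Finally, since $N$ and $F$ are both pronilpotent, so is the direct product $C \cong N \times F$, yielding the proposition. The main obstacle in carrying out this plan is the second step: one must inspect Reid's construction carefully to confirm that when the input is a single endomorphism, the bounded-exponent factor $F$ is automatically pronilpotent rather than merely residually so. This is precisely the refinement that is stated to be \emph{immediate from the proof} of Theorem \ref{thm:reid_emb} in the text preceding the proposition.
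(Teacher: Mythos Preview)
Your approach works but takes a more circuitous path than the paper. The paper does not use the decomposition $C = N \times F$ at all: it simply observes that in Reid's proof of \cite[Theorem 4.3.(i)]{reidemb}, after the reduction to the case $G = \mathrm{Con}(\hphi) = C$, Reid already constructs a sequence of open normal subgroups $N_i \subseteq C$ with $\cap_i N_i = 1$ and $C/N_i$ nilpotent for every $i$. This exhibits $C$ itself as pronilpotent in one stroke, with no need to separate the divisible and torsion factors. Your route instead first invokes the $N \times F$ decomposition (a downstream consequence of Reid's argument combined with Gl\"ockner--Willis), and then for the factor $F$ appeals back to exactly the same step in Reid's proof. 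So the essential input is identical; the paper just applies it to $C$ directly rather than only to $F$.

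One small gap to flag in your treatment of $N$: you write ``passing to a compatible finite-index open subgroup if necessary, $N$ decomposes as a product of pro-$p_i$ groups,'' but pronilpotence of a finite-index open subgroup does not in general pass up to the ambient profinite group (e.g.\ $A_3 \leq S_3$). The fix is actually simpler than your BCH argument: each unipotent $p$-adic Lie group $U_{p_i}$ is nilpotent as an abstract group, so the finite product $\prod_i U_{p_i}$ is nilpotent, and hence its subgroup $N$ is already nilpotent outright --- in particular pronilpotent, with no passage to a subgroup required.
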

\begin{proof} This slightly stronger result is immediate from Reid's proof of \cite[Theorem 4.3.(i)]{reidemb}. Indeed, after the reduction to the case $G=\text{Con}(\hphi)$, Reid proceeds to construct a sequence of finite-index open normal subgroups $N_i$ with $\cap N_i=1$ and such that $G\slash N_i$ is nilpotent for each $i$. Hence $G$ is pronilpotent.\end{proof}

\subsection{Nilpotent groups} The previous result leads us from open embeddings of profinite groups to pronilpotent groups. To exploit this connection later, we will now recall some classical facts about nilpotent groups. Recall that the \emph{lower central series} of a group $G$ is inductively defined by $\gamma_0(G):=G$ and $\gamma_{k+1}(G):=[\gamma_k(G),G]$ for any $k\geq 0$. We say $G$ is \emph{nilpotent of class} $c$ if $c$ is the first term of the lower central series such that $\gamma_c(G)=1$. We say $G$ is \emph{nilpotent} if $G$ is nilpotent of class $c$ for some $c$.

We start by recalling the following elementary result that gives a splitting of a finite nilpotent group over primes.
	\begin{propnr}[{See e.g. \cite[5.2.4]{rob_gps}}] Let $G$ be a finite nilpotent group. Then
		\begin{enumerate}[(i)]
			\item For any prime $p$ dividing $|G|$, there exists a unique $p$-Sylow subgroup $G^{(p)}$ of $G$, and
			\item We have $G\cong \prod_p G^{(p)}$, where the product is over all primes.
		\end{enumerate}
		\label{prop:nilp_prod}
	\end{propnr}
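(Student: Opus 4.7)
The plan is to reduce both parts of the proposition to the single statement that every Sylow subgroup of a finite nilpotent group is normal. Once that is established, uniqueness of $G^{(p)}$ and the direct-product decomposition both follow from elementary Sylow-theoretic bookkeeping.

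First I would establish the \emph{normalizer condition} for nilpotent groups: if $G$ is nilpotent and $H \subsetneq G$ is a proper subgroup, then $H \subsetneq N_G(H)$. The standard way is to argue by induction on the nilpotency class using the upper central series $\{1\} = Z_0 \subseteq Z_1 \subseteq \dots \subseteq Z_c = G$: pick the least $k$ with $Z_k \not\subseteq H$, and observe that $Z_k$ normalizes $H$ (since $[Z_k, G] \subseteq Z_{k-1} \subseteq H$), while $Z_k \not\subseteq H$. Hence $N_G(H) \supsetneq H$.

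Next I would apply this to any Sylow $p$-subgroup $P$ of $G$. By a standard Sylow argument (valid in any finite group), $N_G(N_G(P)) = N_G(P)$: if $g$ normalizes $N_G(P)$, then $gPg^{-1}$ is another Sylow $p$-subgroup of $N_G(P)$, which must be conjugate to $P$ inside $N_G(P)$, forcing $g \in N_G(P)$. Combined with the normalizer condition, this forces $N_G(P) = G$, i.e.\ $P \trianglelefteq G$. Part (i) is then immediate: any other Sylow $p$-subgroup would be conjugate to $P$ and hence equal to $P$, giving uniqueness, so we may unambiguously write $G^{(p)}$.

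For part (ii), given distinct primes $p \neq q$, the subgroups $G^{(p)}$ and $G^{(q)}$ are both normal and have coprime orders, so $G^{(p)} \cap G^{(q)} = \{e\}$; therefore $[G^{(p)}, G^{(q)}] \subseteq G^{(p)} \cap G^{(q)} = \{e\}$, meaning they commute elementwise. This allows one to define a homomorphism
\[
\mu: \prod_{p \mid |G|} G^{(p)} \longrightarrow G, \qquad (g_p)_p \longmapsto \prod_p g_p,
\]
which is well-defined because the factors commute. Injectivity follows by induction on the number of primes involved, using coprimality of orders at each step, and surjectivity follows by comparing orders: $|\prod_p G^{(p)}| = \prod_p |G^{(p)}| = |G|$ since a finite group's order factors as the product of its Sylow orders. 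The main (and only) obstacle is really the normalizer-condition step; everything after that is formal.
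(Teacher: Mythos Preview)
Your argument is correct and is one of the standard textbook proofs of this classical fact. Note, however, that the paper does not supply its own proof of this proposition: it merely states the result with a reference to Robinson's \emph{A Course in the Theory of Groups} (5.2.4), so there is no ``paper's proof'' to compare against. Your route via the normalizer condition and the self-normalizing property $N_G(N_G(P))=N_G(P)$ of Sylow normalizers is exactly the argument Robinson gives at that reference, so in effect you have reproduced the cited proof.
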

The corresponding result holds for profinite groups:
	\begin{propnr}[{see e.g. \cite[2.3.8]{profin1}}]  Let $G$ be a pronilpotent group. Then for any prime $p$, there exist closed pro-$p$ subgroups $G^{(p)}$ of $G$ such that
		$$G\cong \prod_p G^{(p)},$$
	where the product is over all primes.
		\label{prop:pronilp_prod}
	\end{propnr}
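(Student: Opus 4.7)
The plan is to reduce to the finite nilpotent case (Proposition \ref{prop:nilp_prod}) by writing $G$ as an inverse limit of finite nilpotent groups and then showing the $p$-Sylow decomposition is compatible with the transition maps, so that it passes to the limit.

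First, since $G$ is pronilpotent, I would fix an expression $G=\varprojlim_{i\in I} G_i$ where $I$ is a directed poset and each $G_i$ is a finite nilpotent group; equivalently, one can take $I$ to be the collection of open normal subgroups $N\leq G$ with $G\slash N$ nilpotent, ordered by reverse inclusion. For each $i$, Proposition \ref{prop:nilp_prod} gives a unique $p$-Sylow subgroup $G_i^{(p)}\leq G_i$ for every prime $p$, and a canonical isomorphism $G_i\cong \prod_p G_i^{(p)}$, where only finitely many factors are non-trivial.

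Next, I would verify that every transition map $\pi_{ji}:G_j\twoheadrightarrow G_i$ in the inverse system restricts to a map $G_j^{(p)}\to G_i^{(p)}$ for each prime $p$. This is because in a finite nilpotent group the $p$-Sylow subgroup can be intrinsically characterized as the set of elements whose order is a power of $p$, so it is preserved by any group homomorphism into another finite nilpotent group. Consequently, for each prime $p$ the family $(G_i^{(p)})_{i\in I}$ forms a sub-inverse system of $(G_i)_{i\in I}$, and I define
\[
G^{(p)}:=\varprojlim_{i\in I} G_i^{(p)}.
\]
As an inverse limit of finite $p$-groups, $G^{(p)}$ is a closed pro-$p$ subgroup of $G$.

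It then remains to show the product map $\prod_p G^{(p)}\to G$ induced by the inclusions is an isomorphism of profinite groups. The key identity is
\[
\varprojlim_{i} \prod_p G_i^{(p)} \ \cong\ \prod_p \varprojlim_i G_i^{(p)},
\]
which holds because for each fixed $i$ only finitely many $G_i^{(p)}$ are non-trivial, so the product $\prod_p G_i^{(p)}$ (finite on the level of $i$) is simultaneously a product and a coproduct in the category of profinite groups; inverse limits commute with arbitrary products, and this delivers the desired isomorphism. Composing with $G\cong \varprojlim_i G_i\cong \varprojlim_i \prod_p G_i^{(p)}$ from Proposition \ref{prop:nilp_prod} finishes the argument.

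The only step requiring any real care is the commutation of the inverse limit with the infinite product $\prod_p$: the subtlety is that the index set of primes is infinite while each individual finite group involves only finitely many primes. I would handle this by noting that a compatible family of elements $(g_i)_i\in \varprojlim_i\prod_p G_i^{(p)}$ projects, prime by prime, to a compatible family in $\varprojlim_i G_i^{(p)}$, and conversely any collection $(h^{(p)})_p$ with $h^{(p)}\in G^{(p)}$ assembles to a well-defined element of $\varprojlim_i G_i$, because at each finite level $i$ almost all coordinates are trivial. Continuity and closedness of the factors $G^{(p)}$ are automatic from their description as inverse limits inside $G$.
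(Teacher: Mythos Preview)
The paper does not give its own proof of this proposition; it simply cites \cite[2.3.8]{profin1}. Your argument is correct and is essentially the standard proof one finds in that reference: pass to the inverse system of finite nilpotent quotients, apply the Sylow decomposition of Proposition~\ref{prop:nilp_prod} levelwise, observe that homomorphisms of finite nilpotent groups preserve $p$-parts so the decomposition is functorial, and then use that inverse limits commute with products. One small remark: the commutation $\varprojlim_i \prod_p G_i^{(p)} \cong \prod_p \varprojlim_i G_i^{(p)}$ holds simply because both inverse limits and products are limits and therefore commute; the finiteness of the set of primes appearing at each level $i$ is not actually needed for this step, though it does no harm to observe it.
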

In the above setting, $G^{(p)}$ is called the \emph{localization of} $G$ \emph{at the prime} $p$. We continue by studying finitely generated nilpotent groups. The following result allows us to reduce to the torsion-free case.
	\begin{propnr}[{See e.g. \cite[5.2.7]{rob_gps}}] Let $\Gamma$ be a finitely generated nilpotent group. Then the set of torsion elements $\Tor(\Gamma)$ is a finite normal subgroup of $\Gamma$.
	\label{prop:tor_subgp}
	\end{propnr}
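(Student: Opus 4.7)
The plan is to induct on the nilpotency class $c$ of $\Gamma$. The base case $c = 1$ is the structure theorem for finitely generated abelian groups: $\Gamma \cong \bbZ^r \oplus F$ with $F$ finite, so $\Tor(\Gamma) = F$ is a finite (characteristic, hence normal) subgroup.

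For the inductive step, let $Z := Z(\Gamma)$ be the center of $\Gamma$. Since finitely generated nilpotent groups are polycyclic, and hence Noetherian, $Z$ is finitely generated abelian. The quotient $\Gamma/Z$ is finitely generated nilpotent of class $c - 1$, so by the inductive hypothesis $\Tor(\Gamma/Z)$ is a finite normal subgroup. Let $H := \pi^{-1}(\Tor(\Gamma/Z))$, where $\pi \colon \Gamma \to \Gamma/Z$ denotes the projection. Then $H$ is normal in $\Gamma$, the quotient $H/Z$ is finite, and $H$ contains every torsion element of $\Gamma$, since torsion elements project to torsion elements.

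Because $Z \subseteq Z(H)$ and $[H : Z] < \infty$, the subgroup $H$ is center-by-finite, and Schur's theorem then implies that $[H, H]$ is finite. Hence $H/[H, H]$ is finitely generated abelian, so $\Tor(H/[H,H])$ is finite; let $T$ denote its preimage in $H$. Then $T$ is a subgroup (preimage of a subgroup), $T$ is finite (an extension of a finite group by a finite group), and $T = \Tor(H)$: the inclusion $\supseteq$ is immediate, while for $\subseteq$, any $x \in T$ satisfies $x^k \in [H,H]$ for some $k$, and hence $x^{k \cdot |[H,H]|} = 1$. Since $\Tor(\Gamma) \subseteq H$ we conclude $\Tor(\Gamma) = \Tor(H) = T$ is a finite subgroup; normality is automatic since the torsion subgroup is preserved by every automorphism. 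The key non-trivial input beyond the inductive framework is Schur's theorem on center-by-finite groups; one could alternatively argue directly using the Hall--Petresco commutator collection formula to show that products of torsion elements are torsion in a nilpotent group, but the route via Schur keeps the bookkeeping minimal.
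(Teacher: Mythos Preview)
Your proof is correct. The paper does not supply its own argument for this proposition; it simply records the statement as a standard fact with a citation to Robinson's textbook. So there is no paper proof to compare against, and your induction on nilpotency class, with the key step supplied by Schur's theorem on center-by-finite groups, is a clean self-contained argument. One small remark: your appeal to polycyclicity/Noetherianity to get $Z$ finitely generated is fine, but you could equally well note directly that $H$ is finitely generated because it is an extension of the finitely generated abelian group $Z$ by the finite group $H/Z$; either way the logic goes through without circularity.
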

Therefore the study of a finitely generated nilpotent group $\Gamma$ breaks up into studying its torsion subgroup $\text{Tor}(\Gamma)$ and the torsion-free part $\Gamma^{(\infty)}:=\Gamma\slash\text{Tor}(\Gamma)$. Occasionally it will be useful to us to work with a finite-index torsion-free subgroup rather than the torsion-free quotient $\Gamma^{(\infty)}$. We can do so by the following result, which follows immediately from Proposition \ref{prop:tor_subgp} and the fact that nilpotent groups are residually finite, a result of Gruenberg (see e.g. \cite[5.2.21]{rob_gps}):
	\begin{propnr} Let $\Gamma$ be a finitely generated nilpotent group. Then $\Gamma$ is virtually torsion-free. In particular, $\Gamma\slash\Tor(\Gamma)$ is abelian if and only if $\Gamma$ is virtually abelian.
	\label{prop:virt_ab_crit}
	\end{propnr}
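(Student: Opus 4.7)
The plan is to prove the two assertions in sequence, extracting the first from the hint in the statement and leveraging it to reduce the second to a claim about torsion-free nilpotent groups.

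\textbf{Virtual torsion-freeness.} First I would use Proposition \ref{prop:tor_subgp} to observe that $T:=\Tor(\Gamma)$ is a finite normal subgroup of $\Gamma$. By Gruenberg's theorem, $\Gamma$ is residually finite, so for each nontrivial $t\in T$ I can choose a finite-index normal subgroup $N_t\trianglelefteq\Gamma$ not containing $t$. Since $T$ is finite, the intersection $N:=\bigcap_{t\in T\setminus\{e\}} N_t$ is still a finite-index normal subgroup, and by construction $N\cap T=\{e\}$. Any torsion element of $N$ lies in $T\cap N=\{e\}$, so $N$ is torsion-free. This gives virtual torsion-freeness.

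\textbf{Easy direction: virtually abelian implies $\Gamma/T$ abelian.} If $A\subseteq \Gamma$ is a finite-index abelian subgroup, I would intersect $A$ with the torsion-free subgroup $N$ constructed above to produce a finite-index subgroup $A':=A\cap N$ that is simultaneously abelian (subgroup of $A$) and torsion-free (subgroup of $N$). Since $A'\cap T=\{e\}$, the quotient map $\Gamma\twoheadrightarrow \Gamma/T$ embeds $A'$ as a finite-index abelian subgroup of $\Gamma/T$. Thus $\Gamma/T$ is a finitely generated torsion-free nilpotent group that is virtually abelian, and it suffices to establish the following claim:

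\emph{Claim: A finitely generated torsion-free nilpotent group $H$ that is virtually abelian is abelian.}

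\textbf{Hard direction: proof of the claim.} This is the step I expect to be the main technical point. Let $B\subseteq H$ be a finite-index abelian subgroup and set $n:=[H:B]$, so that $g^n\in B$ for every $g\in H$. For any $g,h\in H$, we then have $[g^n,h^n]=1$. I would deduce $[g,h]=1$ using the standard fact that in a torsion-free nilpotent group, $n$-th roots are unique (see e.g. \cite[5.2.19]{rob_gps}), which implies centralizers are \emph{isolated}: if $x^n\in C_H(y)$ for some $n\neq 0$, then $y^{-1}xy$ and $x$ are two $n$-th roots of $x^n$, so $y^{-1}xy=x$ and hence $x\in C_H(y)$. Applying isolation twice, the identity $[g^n,h^n]=1$ gives first $g\in C_H(h^n)$, i.e.\ $h^n\in C_H(g)$, and then $h\in C_H(g)$, so $[g,h]=1$. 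Thus $H$ is abelian, which completes the proof of the claim and hence of the proposition.
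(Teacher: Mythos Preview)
Your proof of virtual torsion-freeness is exactly the argument the paper has in mind (it only cites Proposition~\ref{prop:tor_subgp} and Gruenberg's residual finiteness theorem without spelling anything out), and your treatment of the implication ``virtually abelian $\Rightarrow$ $\Gamma/\Tor(\Gamma)$ abelian'' via uniqueness of roots is correct and a nice way to make the ``in particular'' precise; the paper gives no details here at all.

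Two small gaps to patch. First, you never address the converse direction ``$\Gamma/\Tor(\Gamma)$ abelian $\Rightarrow$ $\Gamma$ virtually abelian''. This is the genuinely easy direction and falls out of your first paragraph: the torsion-free finite-index subgroup $N$ you built satisfies $N\cap T=\{e\}$, hence embeds in the abelian group $\Gamma/T$, so $N$ itself is abelian. You should add a sentence to this effect. Second, in the proof of the Claim you assert $g^n\in B$ for $n=[H:B]$, but this uses that $B$ is normal in $H$, which you have not arranged. Replace $B$ by its normal core (still abelian, still finite index) before setting $n$, and the rest of your isolation argument goes through unchanged.
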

The theory of finitely generated torsion-free nilpotent groups is intimately connected to Lie theory because of the following result of Mal'cev.
	\begin{thmnr}[{Mal'cev (see e.g. \cite[Cor. 2 of 2.11 and 2.18]{raghlie})}] Let $\Gamma$ be a finitely generated torsion-free nilpotent group. Then there exists a unique simply-connected nilpotent Lie group $N$ such that there is an embedding $\Gamma\hookrightarrow N$ as a cocompact lattice.
	\label{thm:nilp_hull}
	\end{thmnr}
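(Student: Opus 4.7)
The plan is to construct $N$ as the \emph{Mal'cev completion} of $\Gamma$: one introduces global coordinates on $\Gamma$ in which the group multiplication is polynomial over $\bbQ$, and then extends the same polynomials to real coordinates to obtain a simply-connected nilpotent Lie group. Both existence and uniqueness then reduce to algebraic manipulations with these Mal'cev coordinates together with the equivalence of categories between simply-connected nilpotent Lie groups and finite-dimensional real nilpotent Lie algebras given by $\log/\exp$.

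First I would refine the lower central series of $\Gamma$ to a central series $\Gamma = \Gamma^{(0)} \supseteq \Gamma^{(1)} \supseteq \cdots \supseteq \Gamma^{(n)} = 1$ with each $\Gamma^{(i)}/\Gamma^{(i+1)} \cong \bbZ$. This is possible because $\Gamma$ is finitely generated nilpotent and torsion-free, so each successive quotient $\gamma_k(\Gamma)/\gamma_{k+1}(\Gamma)$ is finitely generated free abelian (torsion in these quotients would lift to torsion in $\Gamma$, using centrality). Choosing $x_{i+1} \in \Gamma^{(i)}$ projecting to a generator of $\Gamma^{(i)}/\Gamma^{(i+1)}$, every element of $\Gamma$ then admits a unique normal form $x_1^{a_1} \cdots x_n^{a_n}$ with $a_i \in \bbZ$. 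By induction on nilpotency class using Philip Hall's commutator collection process, the multiplication in these coordinates is polynomial with rational coefficients: there exist $p_1, \ldots, p_n \in \bbQ[a_1, \ldots, a_n, b_1, \ldots, b_n]$ with $(x_1^{a_1} \cdots x_n^{a_n}) \cdot (x_1^{b_1} \cdots x_n^{b_n}) = x_1^{p_1(a,b)} \cdots x_n^{p_n(a,b)}$.

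Allowing the $a_i$ and $b_i$ to range over $\bbR$ and using the same polynomials $p_i$ defines a Lie group structure on $\bbR^n$; call this group $N$. It is nilpotent of class at most that of $\Gamma$, and diffeomorphic to $\bbR^n$, hence simply connected. The inclusion $\bbZ^n \hookrightarrow \bbR^n$ realizes the desired embedding $\Gamma \hookrightarrow N$, and the unit cube $[0,1)^n$ is a compact fundamental domain, so $\Gamma$ is cocompact. For uniqueness, suppose $\iota : \Gamma \hookrightarrow N'$ is another embedding as a cocompact lattice in a simply-connected nilpotent Lie group with Lie algebra $\mathfrak{n}'$. Then $\log \iota(\Gamma)$ generates a cocompact additive subgroup of $\mathfrak{n}'$ (since $\iota(\Gamma)$ is cocompact and $\exp$ is a diffeomorphism in the nilpotent case), hence spans $\mathfrak{n}'$ over $\bbR$; moreover, via BCH inversion the brackets among the $\log \iota(x_i)$ are polynomial functions of the $p_i$ and their inverses, so they are determined entirely by the multiplication table of the $x_i$ in $\Gamma$. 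This yields a Lie algebra isomorphism $\mathfrak{n} \cong \mathfrak{n}'$ intertwining the $x_i$, which exponentiates to a Lie group isomorphism $N \cong N'$ restricting to the identity on $\Gamma$.

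The main obstacle is establishing the polynomial structure asserted above: one must produce the $p_i \in \bbQ[a,b]$ purely by algebraic manipulations inside $\Gamma$, before any Lie group is available. This is precisely the content of Hall's collection process, and while it is combinatorially involved, it is a finite computation since nilpotency causes all sufficiently long commutators to vanish. Once this step is in place, the passage from $\bbZ$-coordinates to $\bbR$-coordinates is formal, cocompactness is automatic from the fundamental domain $[0,1)^n$, and uniqueness follows from the fact that a simply-connected nilpotent Lie group is determined up to isomorphism by its Lie algebra, which in turn is pinned down by the commutation relations of any cocompact lattice.
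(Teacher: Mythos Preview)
The paper does not supply its own proof of this theorem; it is quoted as a classical result of Mal'cev with a reference to Raghunathan's book. Your outline follows precisely the standard construction of the Mal'cev completion (polynomial coordinates, extension from $\bbZ$ to $\bbR$, uniqueness via the Lie algebra), which is essentially the route taken in the cited reference. So there is no substantive divergence to compare.

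That said, there is one genuine error in your argument. You claim that for torsion-free $\Gamma$ the successive quotients $\gamma_k(\Gamma)/\gamma_{k+1}(\Gamma)$ of the \emph{lower} central series are free abelian, justifying this by ``torsion in these quotients would lift to torsion in $\Gamma$, using centrality.'' This is false: if $x^m\in\gamma_{k+1}(\Gamma)$ for some $m>0$, that does not force $x^m=e$. A concrete counterexample is the class-$2$ group $\Gamma=\langle x,y,z\mid z\text{ central},\,[x,y]=z^2\rangle$, which is torsion-free, yet $\gamma_1(\Gamma)=\langle z^2\rangle$ and $\Gamma^{\ab}\cong\bbZ^2\times\bbZ/2\bbZ$. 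The fix is standard and does not disturb the rest of your proof: use the \emph{upper} central series (whose successive quotients are torsion-free whenever $\Gamma$ is, since in a torsion-free nilpotent group $g^n$ central implies $g$ central), or replace each $\gamma_k(\Gamma)$ by its isolator. Either choice yields a central series with free abelian quotients, which you can then refine to infinite cyclic steps as you intended.

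Your uniqueness sketch is on the right track but vague; a cleaner route, once you have existence, is to invoke the extension property stated immediately after this theorem in the paper (Mal'cev's superrigidity): the identity $\Gamma\to\Gamma$ extends to Lie homomorphisms $N\to N'$ and $N'\to N$, and uniqueness of extensions forces these to be mutually inverse.
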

Given a finitely generated torsion-free nilpotent group $\Gamma$, we say the simply-connected nilpotent Lie group $N$ given by Theorem \ref{thm:nilp_hull} is the \emph{real Mal'cev completion} of $\Gamma$ and use the notation $N=\Gamma\otimes\bbR$. The real Mal'cev completion captures an incredible amount of the algebraic structure of $\Gamma$. One of the strongest instances is the following superrigidity result of Mal'cev, showing that homomorphisms of $\Gamma$ to other simply-connected nilpotent Lie groups are controlled by $N$.
	\begin{thmnr}[{Mal'cev (see e.g. \cite[2.17]{raghlie})}] Let $\Gamma$ be a finitely generated torsion-free nilpotent group. Then for any simply-connected nilpotent Lie group $H$ and any homomorphism $\rho:\Gamma \to H$, the map $\rho$ uniquely extends to a homomorphism $\overline{\rho}:\Gamma\otimes\bbR\to H$.
	\label{thm:super_malcev}
	\end{thmnr}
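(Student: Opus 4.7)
The plan is to build $\bar\rho$ explicitly using Mal'cev coordinates on $N$. Fix a Mal'cev basis $\gamma_1,\dots,\gamma_n\in\Gamma$ adapted to the lower central series, and set $e_i:=\log_N(\gamma_i)$, so that $e_1,\dots,e_n$ form an $\bbR$-basis for the Lie algebra $\mathfrak{n}$ of $N$. Because $N$ is simply-connected nilpotent, $\exp_N$ is a diffeomorphism and the Baker--Campbell--Hausdorff formula terminates. Writing $\gamma^t:=\exp_N(t\log_N\gamma)$ for $\gamma\in N$ and $t\in\bbR$, every element of $N$ is uniquely expressible as $\gamma_1^{t_1}\cdots\gamma_n^{t_n}$, and multiplication in these coordinates is polynomial with rational coefficients: the product of $\gamma_1^{t_1}\cdots\gamma_n^{t_n}$ and $\gamma_1^{s_1}\cdots\gamma_n^{s_n}$ equals $\gamma_1^{u_1}\cdots\gamma_n^{u_n}$ where each $u_i=P_i(t,s)$ with $P_i\in\bbQ[t,s]$. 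A key point, due to Mal'cev, is that the same polynomials $P_i$ describe the group law on the lattice $\Gamma$ when $(t,s)\in\bbZ^{2n}$.

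Setting $h_i:=\rho(\gamma_i)\in H$ and using that $\exp_H$ is also a diffeomorphism, the real powers $h_i^t:=\exp_H(t\log_H h_i)$ are well-defined, so I can set
\[
\bar\rho\bigl(\gamma_1^{t_1}\cdots\gamma_n^{t_n}\bigr)\;:=\;h_1^{t_1}\cdots h_n^{t_n}.
\]
This agrees with $\rho$ on $\Gamma$ because $\rho$ is a homomorphism. The heart of the argument is then to verify that $\bar\rho$ is itself a group homomorphism, i.e.\ $\bar\rho(x\cdot y)=\bar\rho(x)\bar\rho(y)$ for all $x,y\in N$. Writing $x,y$ in Mal'cev coordinates and composing both sides of the desired equality with $\log_H$, one obtains two maps $\bbR^{2n}\to\mathfrak{h}$. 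Each is polynomial in $(t,s)$: the left side because the $P_i$ are polynomial and BCH is a finite polynomial sum in the nilpotent $\mathfrak{h}$, and the right side by the same BCH computation applied directly. Both polynomial maps take the same value at every $(a,b)\in\bbZ^{2n}$, since there they reduce to $\rho(\gamma^a\gamma^b)=\rho(\gamma^a)\rho(\gamma^b)$ inside $\Gamma$. Since a polynomial on $\bbR^{2n}$ vanishing on $\bbZ^{2n}$ is identically zero, the two maps coincide, proving the homomorphism property.

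Uniqueness follows easily: any Lie group homomorphism $\bar\rho'\colon N\to H$ extending $\rho$ satisfies $\bar\rho'\circ\exp_N=\exp_H\circ d\bar\rho'$, and together with $\bar\rho'(\gamma_i)=\rho(\gamma_i)$ this forces $d\bar\rho'(e_i)=\log_H\rho(\gamma_i)$; since the $e_i$ span $\mathfrak{n}$ over $\bbR$, this determines $d\bar\rho'$, and hence $\bar\rho'$, completely. The main obstacle in the existence half is the polynomial structure of Mal'cev coordinates---namely the assertion that one and the same family of rational polynomials $P_i$ controls multiplication both on the discrete lattice $\Gamma$ and on the ambient Lie group $N$. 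This is the nontrivial Lie-theoretic input (ultimately BCH applied to a basis adapted to the central series), and it is precisely what lets the homomorphism identity propagate from the integer points of $\bbZ^{2n}$ to all of $\bbR^{2n}$.
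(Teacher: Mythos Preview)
The paper does not give its own proof of this statement: it is quoted as a classical result of Mal'cev with a reference to Raghunathan's book, and is used as a black box in Section~\ref{sec:nilp}. So there is nothing in the paper to compare your argument against.

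That said, your argument is correct and is essentially the standard proof one finds in the cited reference. The key ingredients---Mal'cev coordinates with polynomial multiplication governed by the same rational polynomials on $\Gamma$ and on $N$, the definition $\bar\rho(\gamma_1^{t_1}\cdots\gamma_n^{t_n}):=h_1^{t_1}\cdots h_n^{t_n}$, and the observation that the homomorphism identity reduces to two $\mathfrak{h}$-valued polynomials in $(t,s)$ agreeing on the integer lattice $\bbZ^{2n}$---are exactly the right ones. One minor remark: your uniqueness argument treats $\bar\rho'$ as a Lie group homomorphism (so that $d\bar\rho'$ exists); this is the intended reading of the theorem, but you might state it explicitly, or alternatively note that any continuous extension is determined because $\Gamma$ is Zariski-dense in $N$ and continuous homomorphisms of simply-connected nilpotent Lie groups are polynomial in exponential coordinates.
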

	
\section{Nilpotent case of the main theorem}
\label{sec:nilp}

The goal of this section is to prove Theorem \ref{thm:main} under the additional assumption that $\Gamma$ is nilpotent, i.e. the following result.

\begin{thmnr} Suppose $\Gamma$ is a finitely generated and nilpotent group and admits a descending chain of finite-index normal subgroups $\Gamma_k$ with $\Gamma_k\cong \Gamma$. Set $\Gamma_\infty=\cap_k \Gamma_k$. Then modulo its torsion, $\Gamma\slash\Gamma_\infty$ is abelian.
\label{thm:nilp_case}
\end{thmnr}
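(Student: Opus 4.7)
The plan is to embed $\Gamma$ into its Mal'cev Lie hull and exploit the rigidity of lattices in simply-connected nilpotent Lie groups, together with the normality hypothesis, to produce a uniform bound on $[\Gamma:\Gamma_k\cdot Z(\Gamma)]$ in $k$. First I would reduce to the torsion-free case: by Proposition \ref{prop:tor_subgp}, $T:=\Tor(\Gamma)$ is finite, and the isomorphism $\Gamma_k\cong\Gamma$ forces $|\Tor(\Gamma_k)|=|T|$; but $\Tor(\Gamma_k)\subseteq T$, so these coincide and $T\subseteq\Gamma_k$ for every $k$. Hence $T\subseteq\Gamma_\infty$, and quotienting by $T$ preserves the hypotheses, so I may assume $\Gamma$ is torsion-free. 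Letting $N$ be its Lie hull (Theorem \ref{thm:nilp_hull}), Mal'cev superrigidity (Theorem \ref{thm:super_malcev}) extends each isomorphism $\varphi_k\colon\Gamma\to\Gamma_k\subseteq N$ to a Lie group homomorphism $\alpha_k\colon N\to N$, which must in fact be an automorphism since $\alpha_k(N)$ is a closed connected subgroup containing the cocompact lattice $\Gamma_k$.

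The central step, and the main obstacle I anticipate, is the following description of $M:=N_N(\Gamma)$: its identity component is $M^0=Z(N)$, and $\overline M:=M/Z(N)$ is a lattice in $\overline N:=N/Z(N)$ containing $\overline\Gamma:=\Gamma\cdot Z(N)/Z(N)$ with some finite index $f$. The inclusion $M^0\subseteq Z(N)$ holds because any smooth path in $M$ starting at $e$ must pointwise commute with each $\delta\in\Gamma$ by continuity into a discrete set, so $M^0\subseteq C_N(\Gamma)$, and $C_N(\Gamma)=Z(N)$ because $\Gamma$ generates $N$ as a Lie group; then $\overline M$ is a discrete subgroup of $\overline N$ containing the cocompact lattice $\overline\Gamma$, hence a lattice of finite index $f$ over $\overline\Gamma$. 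The normality hypothesis now enters through the identity
\[
\alpha_k^{-1}(\gamma)\,\delta\,\alpha_k^{-1}(\gamma)^{-1}\;=\;\alpha_k^{-1}\bigl(\gamma\,\alpha_k(\delta)\,\gamma^{-1}\bigr)\;\in\;\Gamma
\qquad(\gamma,\delta\in\Gamma),
\]
which yields $\alpha_k^{-1}(\Gamma)\subseteq M$. Pushing to $\overline N$, the chain $\overline\alpha_k(\overline\Gamma)\subseteq\overline\Gamma\subseteq\overline\alpha_k(\overline M)$ together with $[\overline\alpha_k(\overline M):\overline\alpha_k(\overline\Gamma)]=f$ gives $[\overline\Gamma:\overline\Gamma_k]\mid f$ for every $k$, where $\overline\Gamma_k:=\Gamma_k\cdot Z(N)/Z(N)$.

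Since a lattice has only finitely many subgroups of bounded index, the descending chain $\overline\Gamma_k$ stabilizes at some $\overline\Gamma_\ast$ for all $k\geq l$. Set $\Gamma':=\Gamma_l\cdot Z(\Gamma)$, a subgroup of $\Gamma$ of index at most $f$. For every $k\geq l$ and every $g,h\in\Gamma_l$, stabilization allows us to write $g=g_k z_g$ and $h=h_k z_h$ with $g_k,h_k\in\Gamma_k$ and $z_g,z_h\in Z(\Gamma)$; centrality of $Z(\Gamma)$ immediately gives $[g,h]=[g_k,h_k]\in\Gamma_k$, and intersecting over $k\geq l$ yields $[\Gamma_l,\Gamma_l]\subseteq\Gamma_\infty$. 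The same centrality identity shows $[\Gamma',\Gamma']=[\Gamma_l,\Gamma_l]$, so $\Gamma'/\Gamma_\infty$ is abelian and therefore $\Gamma/\Gamma_\infty$ is virtually abelian. Proposition \ref{prop:virt_ab_crit} then completes the proof.
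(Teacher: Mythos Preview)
Your proof is correct and follows essentially the same strategy as the paper: reduce to the torsion-free case, extend the $\varphi_k$ to automorphisms $\alpha_k$ of the Lie hull via Mal'cev rigidity, use normality of $\Gamma_k$ to place $\alpha_k^{-1}(\Gamma)$ inside a Lie subgroup whose identity component is abelian, extract a uniform bound on the index of an abelian piece of $\Gamma/\Gamma_k$, and conclude. The only difference is in the packaging of that middle step: the paper forms the compact quotient $K=\overline{\bigcup_k\alpha_k^{-1}(\Gamma)}\big/\Gamma$ and invokes the fact that a compact connected nilpotent Lie group is a torus, whereas you work directly with the full normalizer $M=N_N(\Gamma)$, identify $M^0=Z(N)$, and pass to the discrete quotient $M/Z(N)$---two sides of the same observation.
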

This result will be used several times in the subsequent sections to establish the general version of Theorem \ref{thm:main}. Before starting the proof, we provide a brief outline: 

First we reduce the statement to the case of torsion-free nilpotent groups (Step 1 below). Any such group is a lattice in its real Mal'cev completion $G$. Using that the real Mal'cev completion $G$ is the same for $\Gamma$ and its subgroup $\Gamma_k$, we obtain automorphisms $\varphi_k$ of $G$ with $\varphi_k(\Gamma)=\Gamma_k$ (Step 2).

Then $\Gamma_{-k}:=\varphi_k^{-1}(\Gamma)$ are cocompact lattices in $G$ contained in $H:=\overline{\cup_k \Gamma_{-k}}$. This allows us to consider an (archimedean) limit of $\Gamma\slash\Gamma_k \cong \Gamma_{-k}\slash\Gamma$, namely $H\slash\Gamma$. Contrary to the profinite limit $\varprojlim \Gamma\slash\Gamma_k$, we can control $H\slash\Gamma$ because it is a compact nilpotent Lie group. We use this to show that $\Gamma\slash\Gamma_k$ have normal abelian subgroups of uniformly bounded index (Step 3). Finally we conclude from this that $\Gamma\slash\Gamma_\infty$ is abelian modulo torsion (Step 4).

\begin{proof}[{Proof of Theorem \ref{thm:nilp_case}}] \mbox{}

\subsection*{Step 1 (reduction to torsion-free case)} 
We claim that it suffices to establish the theorem for torsion-free nilpotent groups. Indeed, let $\Gamma$ be any finitely generated group with a descending chain of finite-index normal subgroups $\Gamma_k$ with $\Gamma_k\cong\Gamma$. Choose such isomorphisms and view these as embeddings $\varphi_k:\Gamma\hookrightarrow \Gamma$ (with image $\Gamma_k$). Recall that the set $\Tor(\Gamma)$ of $\Gamma$ form a finite normal subgroup, and likewise for $\Gamma_k$ (see Proposition \ref{prop:tor_subgp}).

For any $k\geq 1$, we clearly have
	\begin{equation}
	\varphi_k(\Tor(\Gamma))\subseteq \Tor(\Gamma_k)\subseteq \Tor(\Gamma). 
	\label{eq:tor_incl}
	\end{equation}
On the other hand, since $\Gamma_k\cong\Gamma$, we have $\Tor(\Gamma_k)\cong\Tor(\Gamma)$. Since $\Tor(\Gamma)$ is finite, both inclusions in Equation \ref{eq:tor_incl} are equalities. Hence $\varphi_k:\Gamma\hookrightarrow\Gamma$ descends to an embedding $\varphi_k:\Gamma\slash \Tor(\Gamma)\to \Gamma\slash \Tor(\Gamma)$. Further note that $\Gamma^{(\infty)}:=\Gamma\slash \Tor(\Gamma)$ is torsion-free and nilpotent, and if it is abelian modulo its torsion, then so is $\Gamma$. This completes Step 1.

For the remainder of the proof, we assume $\Gamma$ satisfies the hypotheses of Theorem \ref{thm:nilp_case} and is torsion-free. We choose isomorphisms $\varphi_k$ as above, and view them as self-embeddings $\varphi_k :\Gamma\hookrightarrow \Gamma$ with image $\Gamma_k$.

\subsection*{Step 2 (constructing automorphisms of the Mal'cev completion)} Let $G:=\Gamma\otimes\bbR$ be the real Mal'cev completion of $\Gamma$, so that $\Gamma$ embeds as a cocompact lattice in $G$ (see Theorem \ref{thm:nilp_hull}). Henceforth we will identify $\Gamma$ with its image in $G$. 

By Mal'cev's Superrigidity Theorem \ref{thm:super_malcev}, the embeddings $\varphi_k:\Gamma\hookrightarrow \Gamma$ uniquely extend to continuous homomorphisms $G\to G$, which we will also denote by $\varphi_k$. We claim each $\varphi_k$ is an automorphism of $G$. 

Indeed, since $\varphi_k(\Gamma)$ is of finite index in $\Gamma$, we know that $\varphi_k(\Gamma)$ is also a lattice in $G$. Therefore we can also apply Mal'cev's Superrigidity Theorem \ref{thm:super_malcev} to the inverse map
	$$\varphi_k^{-1}|_{\varphi_k(\Gamma)}: \varphi_k(\Gamma)\to \Gamma.$$
The uniqueness in Mal'cev's superrigidity theorem implies that the extension of the inverse is inverse to $\varphi_k:G\to G$. Hence $\varphi_k$ extends to an automorphism of $G$. 

\subsection*{Step 3 (control on $\Gamma\slash\Gamma_k$)} We claim that there exists $C\geq 1$ such that for any $k\geq 1$, the group $\Gamma\slash\Gamma_k$ has a normal abelian subgroup of index at most $C$. 

To see this, set $\Gamma_{-k}:=\varphi_k^{-1}(\Gamma)$ (as a subgroup of $G$), where $k\geq 1$. Set
		$$H:=\overline{\bigcup_{k\geq 0} \Gamma_{-k}}.$$
Since $\Gamma\subseteq G$ is cocompact and is contained in the closed subgroup $H\subseteq G$, we must have that $\Gamma$ is also cocompact in $H$. Further since $\Gamma_k$ is a normal subgroup of $\Gamma$ for any $k\geq 1$, we have that $\Gamma$ is a normal subgroup of $\Gamma_{-k}$ for any $k\geq 1$. Since $\Gamma$ is closed, it follows that $H$ also normalizes $\Gamma$. 

Hence $K:=H\slash\Gamma$ is a compact nilpotent Lie group containing $\Gamma_{-k}\slash\Gamma$ for any $k\geq 1$. Note that $K$ has finitely many connected components and its identity component $K^0$ is a compact connected nilpotent group and hence is a torus. Let $C:=|K\slash K^0|$ be the number of connected components of $K$. 

Let $k\geq 1$ and consider the composition
	$$f: \Gamma\slash\Gamma_k \overset{\varphi_k^{-1}}{\longrightarrow} \Gamma_{-k}\slash\Gamma \subseteq H\slash \Gamma =K \twoheadrightarrow K\slash K^0.$$
Note that $\ker(f)$ has index at most $C$. Further $\ker(f)$ embeds into the compact torus $K^0$ (by omitting the last map in the above composition) and hence is abelian. This proves the claim.

\subsection*{Step 4 (end of the proof)} We claim that $\overline{\Gamma}:=\Gamma\slash\Gamma_\infty$ is virtually abelian, which will finish the proof (using Proposition \ref{prop:virt_ab_crit}). Let $C\geq 1$ be as in Step 3. Since $\Gamma$ is finitely generated, it has only finitely many subgroups of index at most $C$. Define $\Lambda$ to be the intersection of all of these subgroups. We will show that the image of $\Lambda$ in $\overline{\Gamma}$ is abelian.

To see this, note that $\Lambda$ is a finite-index normal subgroup of $\Gamma$ with the following property: Whenever $L$ is a finite group of order at most $C$ and $f:\Gamma\to L$ is a homomorphism, $\Lambda\subseteq \ker(f)$. 

Hence for any $k\geq 1$, the image of the composition
	$$\Lambda\to \overline{\Gamma}\to \Gamma\slash\Gamma_k \overset{\varphi_k^{-1}}{\longrightarrow} \Gamma_{-k}\slash\Gamma\hookrightarrow K$$
is contained in $K^0$ (because $\Lambda$ maps trivially to $K\slash K^0$). We conclude that the image of $\Lambda$ in $\Gamma\slash\Gamma_k$ is abelian, so that $[\Lambda,\Lambda]\subseteq \Gamma_k$ for all $k$. Hence 
	$$[\Lambda,\Lambda]\subseteq \cap_k \Gamma_k = \Gamma_\infty.$$ 
Therefore the image of $\Lambda$ in $\overline{\Gamma}$ is abelian, as desired.\end{proof}

\section{Almost global bound on nilpotency class}
\label{sec:global}

We will now start the proof of the general case of Theorem \ref{thm:main}. Let $\Gamma$ be a finitely generated group with a descending chain of finite-index normal subgroups $\{\Gamma_k\}_k$ such that $\Gamma_k\cong\Gamma$. We start by using the structure of the profinite completion of $\Gamma$ to obtain information about the finite groups $\Gamma\slash\Gamma_k$.

\begin{propnr} $\Gamma\slash\Gamma_k$ is nilpotent for every $k\geq 1$. \end{propnr}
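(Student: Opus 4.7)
The plan is to realize $\Gamma/\Gamma_k$ as a quotient of the profinite completion $\widehat{\Gamma}$ by an open subgroup coming from a self-embedding, and then apply the structural results of Reid (Theorem \ref{thm:reid_emb}) together with Proposition \ref{prop:con_nilp}.

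First I would fix an isomorphism $\Gamma \cong \Gamma_k$ and let $\varphi_k : \Gamma \hookrightarrow \Gamma$ be the resulting self-embedding with image $\Gamma_k$. By Proposition \ref{prop:ind_map}, $\varphi_k$ extends to an open self-embedding $\hphi_k : \hGamma \hookrightarrow \hGamma$ with image $\overline{\Gamma_k}$. Since $\Gamma_k$ has finite index in $\Gamma$, it is separable (being closed in the profinite topology), and the composition $\Gamma \to \hGamma \twoheadrightarrow \hGamma/\overline{\Gamma_k}$ is surjective with kernel exactly $\Gamma_k$. Hence
$$\Gamma/\Gamma_k \;\cong\; \hGamma/\hphi_k(\hGamma).$$

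Next I would apply Theorem \ref{thm:reid_emb} to the open embedding $\hphi_k$, which decomposes $\hGamma = C_k \rtimes Q_k$, where $C_k$ is $\hphi_k$-invariant with $\hphi_k|_{C_k}$ an open contracting embedding, and $Q_k = \bigcap_{n\geq 0} \hphi_k^n(\hGamma)$ on which $\hphi_k$ acts by an automorphism. In particular, $\hphi_k(Q_k) = Q_k$, so
$$\hphi_k(\hGamma) \;=\; \hphi_k(C_k) \rtimes Q_k,$$
which gives an identification
$$\Gamma/\Gamma_k \;\cong\; \hGamma/\hphi_k(\hGamma) \;\cong\; C_k/\hphi_k(C_k).$$
By Proposition \ref{prop:con_nilp}, the group $C_k$ is pronilpotent, so any quotient of $C_k$ is pronilpotent as well. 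Since $\Gamma/\Gamma_k$ is finite, pronilpotency forces it to be a finite nilpotent group, which completes the argument.

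There is no real obstacle here once one has the identification $\Gamma/\Gamma_k \cong \hGamma/\hphi_k(\hGamma)$; the only point to be slightly careful about is that $\hphi_k$ genuinely preserves the semidirect product decomposition (so that the quotient reduces to $C_k/\hphi_k(C_k)$), which is immediate from the explicit description of $C_k$ and $Q_k$ in Remark \ref{rmk:cq_expl}.
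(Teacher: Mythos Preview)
Your proposal is correct and follows essentially the same approach as the paper's proof: you induce the open self-embedding $\hphi_k$ on $\hGamma$, apply Reid's decomposition $\hGamma = C_k \rtimes Q_k$, identify $\Gamma/\Gamma_k \cong C_k/\hphi_k(C_k)$, and conclude nilpotency from the pronilpotency of $C_k$. The only difference is cosmetic---you spell out a bit more carefully why $\Gamma/\Gamma_k \cong \hGamma/\hphi_k(\hGamma)$ and why the quotient reduces to $C_k/\hphi_k(C_k)$, whereas the paper states these identifications directly.
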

\begin{proof} Fix isomorphisms $\Gamma\cong\Gamma_k$ and view these as self-embeddings
	$$\varphi_k:\Gamma\hookrightarrow \Gamma$$
with image $\Gamma_k$. These maps induce
	$$\hphi_k:\hGamma\to \hGamma,$$
on the profinite completion of $\Gamma$. By Proposition \ref{prop:ind_map}, the map $\hphi_k$ is an open embedding for each $k$. By Reid's work on open self-embeddings of profinite groups (see Theorem \ref{thm:reid_emb}), there are closed subgroups $C_k\subseteq \hGamma$ and $Q_k\subseteq \hGamma$ such that
	$$\hGamma=C_k\rtimes Q_k$$
and $\hphi_k$ restricts to a contracting endomorphism of $C_k$ and an automorphism of $Q_k$. In particular, we have
	$$\Gamma\slash\Gamma_k \cong \hGamma\slash\hphi_k(\hGamma) = C_k \slash \hphi_k(C_k).$$
By Proposition \ref{prop:con_nilp}, the group $C_k$ is pronilpotent, and since $\hphi(C_k)\subseteq C_k$ is open, it follows that the finite quotient $C_k\slash\hphi_k(C_k)\cong \Gamma\slash\Gamma_k$ is nilpotent.\end{proof}



In order to use the nilpotency of $\Gamma\slash\Gamma_k$ to obtain information about $\Gamma$, we will show the nilpotency class of $\Gamma\slash\Gamma_k$ is uniformly bounded. Most of the next two sections is devoted to the proof of this. Once this uniform bound has been established, the proof of Theorem \ref{thm:main} will be finished in Section \ref{sec:proofs}.  

To bound the nilpotency class of $\Gamma\slash\Gamma_k$, write it as a product over primes (see Proposition \ref{prop:nilp_prod}):
	$$\Gamma\slash\Gamma_k = \prod_p (\Gamma\slash\Gamma_k)^{(p)}$$
where $(\Gamma\slash\Gamma_k)^{(p)}$ is a finite $p$-group. In this way, the problem of uniformly bounding the nilpotency class of $\Gamma\slash\Gamma_k$ separates out into a problem over each prime. The desired uniform bound on nilpotency class will be obtained in two steps: First we give a uniform bound on the nilpotency class of $(\Gamma\slash\Gamma_k)^{(p)}$ for almost all primes $p$ (see Theorem \ref{thm:almost_global} below). Finally, in Theorem \ref{thm:local}, we will obtain a local bound on nilpotency class at any fixed prime $p$. The rest of this section will be devoted to obtain the bound at almost all primes.

\begin{thmnr} There is a finite set of primes $S$ and $c\geq 1$ such that for any $p\notin S$ and $k\geq 1$, the group $(\Gamma\slash \Gamma_k)^{(p)}$ is nilpotent of class at most $c$.
\label{thm:almost_global}
\end{thmnr}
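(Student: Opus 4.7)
The plan is to apply the nilpotent case (Theorem~\ref{thm:nilp_case}) to a family of finitely generated nilpotent quotients of $\Gamma$, and combine the resulting ``virtually abelian'' conclusions with the uniform prime restriction coming from Reid's structure theorem.

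\textbf{Step 1 (Nilpotent quotients satisfying the chain hypothesis).} For each $c \geq 2$, consider a finitely generated nilpotent quotient $N_c$ of $\Gamma$ of nilpotency class less than $c$---for instance $\Gamma/\gamma_c(\Gamma)$, or (to guarantee injectivity of the descent) the quotient by the isolator of $\gamma_c(\Gamma)$, possibly after replacing $\Gamma$ by a finite-index subgroup. Since the lower central series is preserved by any group homomorphism, each self-embedding $\varphi_k:\Gamma\hookrightarrow \Gamma$ descends to an injective endomorphism $\bar\varphi_k:N_c\hookrightarrow N_c$; the images $N_{c,k}$ of $\Gamma_k$ form a descending chain of finite-index normal subgroups of $N_c$ with $N_{c,k}\cong N_c$. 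Theorem~\ref{thm:nilp_case} then yields that $N_c/\bigcap_k N_{c,k}$ is virtually abelian---equivalently has finite commutator subgroup.

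\textbf{Step 2 (Uniform set of torsion primes).} Apply Reid's theorem together with Proposition~\ref{prop:con_nilp} to the open embedding $\hat\varphi_1:\hat\Gamma \hookrightarrow \hat\Gamma$: one obtains a pronilpotent contracting subgroup $C\subseteq\hat\Gamma$ with $C\cong N\times F$, where $N$ is open in a product of $p$-adic unipotent Lie groups at primes in a finite set $S_1$ and $F$ has bounded exponent supported on a finite set $S_2$. Set $S:=S_1\cup S_2$. The key claim is that the torsion of $N_c/\bigcap_k N_{c,k}$ is supported on $S$ uniformly in $c$: the torsion records non-abelian commutator data in $\Gamma$ that must be absorbed in the contracting pronilpotent part $C$ of $\hat\Gamma$, and since $C^{(p)}=e$ for $p\notin S$, no $p$-torsion can appear outside $S$.

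\textbf{Step 3 (Nilpotency bound).} For $p\notin S$, the $p$-part of $N_c/\bigcap_k N_{c,k}$ is abelian. Since $\Gamma/(\Gamma_k\gamma_c(\Gamma))$ is a finite quotient factoring through $N_c/\bigcap_k N_{c,k}$, its $p$-Sylow is abelian; but $\Gamma/(\Gamma_k\gamma_c(\Gamma))$ equals the maximal class-$(c-1)$ quotient of $\Gamma/\Gamma_k$, and $p$-Sylows commute with passage to lower-central quotients in finite nilpotent groups, so the class-$(c-1)$ quotient of $(\Gamma/\Gamma_k)^{(p)}$ is abelian. The elementary lemma that a finite nilpotent group $G$ with $G/\gamma_c(G)$ abelian for some $c\geq 2$ must itself be abelian---proved by iterating the equality $[G,G]=\gamma_c(G)$ through the lower central series and invoking eventual triviality---then forces $(\Gamma/\Gamma_k)^{(p)}$ to be abelian for every $k$, giving the theorem with nilpotency class bound $c=1$ outside $S$.

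\textbf{Main obstacle.} The hard part is the uniform prime restriction in Step~2: controlling the torsion of $N_c/\bigcap_k N_{c,k}$ in terms of the fixed set $S$ independently of $c$. This requires tracing the interaction between the verbal filtration $\gamma_c(\Gamma)$ and the contracting pronilpotent subgroup $C$ produced by Reid's theorem, and carefully handling the fact that the isomorphisms $\varphi_k:\Gamma\to \Gamma_k$ need not preserve $\Gamma_\infty$ as a set, so that the descent of the chain to $N_c$ is not entirely canonical.
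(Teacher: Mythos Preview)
Your overall architecture---descend the chain to the nilpotent quotients $N_c$, apply Theorem~\ref{thm:nilp_case} there, and then bound the bad primes---matches the paper's. The injectivity issue you flag in Step~1 is handled in the paper exactly as your parenthetical suggests: one passes to $N_c^{(\infty)}=N_c/\Tor(N_c)$ (equivalently, mods out the isolator of $\gamma_c(\Gamma)$) and checks injectivity via Mal'cev superrigidity. Step~3 is fine, though the ``elementary lemma'' is a detour: once you know $[\overline{N}_c,\overline{N}_c]\subseteq\Tor(\overline{N}_c)$ has trivial $p$-part, the $p$-Sylow of every finite quotient is already abelian, with no bootstrap needed.

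The genuine gap is Step~2. Applying Reid's theorem to $\hat\varphi_1$ alone cannot produce the finite set $S$ you want. The contracting subgroup $C$ and its support depend on the chosen embedding, and the chain $(\Gamma_k)_k$ is \emph{not} assumed to be the iterates of $\varphi_1$; the $\varphi_k$ are independent isomorphisms onto possibly unrelated subgroups. Already for $\Gamma=\bbZ$ with, say, $\Gamma_k=(k+1)!\,\bbZ$, the contracting group of $\hat\varphi_1$ sits at the single prime $2$, yet arbitrarily many primes appear in $\Gamma/\Gamma_k$. Your heuristic that ``the torsion records non-abelian commutator data that must be absorbed in $C$'' has no force: $C$ knows only about $\varphi_1$, not about $\Gamma_\infty$ or the other $\varphi_k$. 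You acknowledge this in your ``Main obstacle'' paragraph, but it is not a technicality---it is the entire content of the theorem.

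The paper resolves this completely differently, without invoking Reid at all to locate $S$. It takes $S$ to be the primes dividing $|\Tor(\overline{N}_2)|$, where $\overline{N}_2=N_2/\bigcap_k\bar\varphi_k(N_2)$. The key step (Lemmas~\ref{lem:tor_ker}--\ref{lem:ab_or_div}) is a verbal-subgroup argument: using that $\overline{N}_c=\Nil_c(\Gamma/\Gamma_\infty)$ and hence $\Nil_2(\overline{N}_c^{(\fin)})\cong\overline{N}_2^{(\fin)}$ for a suitable finite quotient $\overline{N}_c^{(\fin)}$, one shows that if the $p$-part of $\overline{N}_c^{(\fin)}$ is nonabelian then so is the $p$-part of $\overline{N}_2^{(\fin)}$, forcing $p\mid|\Tor(\overline{N}_2)|$. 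Thus the bad primes are pinned down at level $c=2$, uniformly in $c$. This is the missing idea: control torsion of $\overline{N}_c$ not through the profinite completion, but by relating it back to the fixed finite group $\Tor(\overline{N}_2)$ via the functor $\Nil_2$.
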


Let us first introduce some notation. Recall that $\gamma_c(\Gamma)$ is the $c$th term in the lower central series of $\Gamma$. Set $N_c:=\Gamma\slash\gamma_c(\Gamma)$. Note that $N_c$ is a finitely generated nilpotent group of class at most $c$. Set $N_c^{(\infty)}=N_c\slash \Tor(N_c)$, so $N_c^{(\infty)}$ is a finitely generated torsion-free nilpotent group.

Let us now give an outline of the proof of Theorem \ref{thm:almost_global}. We will first show that $\varphi_k:\Gamma\hookrightarrow \Gamma$ descend to self-embeddings of $N_c^{(\infty)}$ (Claims \ref{cl:embed} and \ref{cl:index_div}). We can therefore apply the nilpotent case of the Main Theorem \ref{thm:main}, i.e. Theorem \ref{thm:nilp_case}. This easily shows that any nonabelian subgroup of $\Gamma\slash\Gamma_k$ comes from the torsion of a certain nilpotent group of class $c$, where $c$ is the nilpotency class of $\Gamma\slash\Gamma_k$. We will show that if a prime contributes nonabelian torsion at some class $c_0$, then it already did so at $c=2$ (Lemma \ref{lem:ab_or_div}), and hence nonabelian subgroups of $\Gamma\slash\Gamma_k$ are only located at the finite set of primes visible at $c=2$. We will use this to complete the proof of Theorem \ref{thm:almost_global} at the end of the section.

\begin{claimnr} $\varphi_k:\Gamma\hookrightarrow \Gamma$ descend to embeddings $\overline{\varphi}_k^{(\infty)}:N_c^{(\infty)}\hookrightarrow N_c^{(\infty)}$.
\label{cl:embed}
\end{claimnr}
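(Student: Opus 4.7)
The plan is to split the claim into well-definedness and injectivity of $\overline{\varphi}_k$, and to prove the latter by a variant of the Lie hull argument from Step 2 of Section \ref{sec:nilp}. Well-definedness is essentially formal: any group homomorphism preserves the lower central series, so $\varphi_k(\gamma_c(\Gamma)) \subseteq \gamma_c(\Gamma)$ and hence $\varphi_k$ descends to a homomorphism $N_c \to N_c$; since homomorphisms likewise preserve torsion subgroups, this further descends to a well-defined map $\overline{\varphi}_k: N_c^{(\infty)} \to N_c^{(\infty)}$. Moreover, since $\varphi_k(\Gamma)$ has finite index in $\Gamma$, the image $\overline{\varphi}_k(N_c^{(\infty)})$ has finite index in $N_c^{(\infty)}$.

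For injectivity, the plan is to extend $\overline{\varphi}_k$ via Mal'cev theory and then force surjectivity on the Lie hull by a dimension argument. Specifically, $N_c^{(\infty)}$ is finitely generated torsion-free nilpotent and hence embeds as a cocompact lattice in its Lie hull $H_c$ (Theorem \ref{thm:nilp_hull}), and by Mal'cev's Superrigidity Theorem \ref{thm:super_malcev} the map $\overline{\varphi}_k$ extends uniquely to a continuous homomorphism $\Phi_k: H_c \to H_c$. Since $H_c$ is simply connected nilpotent, the image $\Phi_k(H_c)$ is a closed connected Lie subgroup containing the finite-index subgroup $\overline{\varphi}_k(N_c^{(\infty)})$ of the lattice $N_c^{(\infty)}$, which forces $\Phi_k(H_c) = H_c$ by a dimension comparison. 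Then $\ker \Phi_k$ is a $0$-dimensional closed subgroup of $H_c$; by the long exact sequence in homotopy applied to $\ker \Phi_k \to H_c \to H_c$ (using $\pi_1(H_c) = \pi_0(H_c) = 0$) it is also connected, hence trivial. So $\Phi_k$ is a Lie automorphism of $H_c$, and in particular $\overline{\varphi}_k$ is injective.

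The main subtlety is that one cannot simply mimic Step 2 of Section \ref{sec:nilp} by extending the inverse $\overline{\varphi}_k^{-1}|_{\overline{\varphi}_k(N_c^{(\infty)})}$ via superrigidity, because injectivity of $\overline{\varphi}_k$ is exactly what we are trying to prove and a priori this inverse need not even make sense. The workaround is to argue surjectivity of $\Phi_k$ directly from the fact that its image is a closed Lie subgroup of $H_c$ containing a lattice in $H_c$, and then to conclude injectivity from connectedness of the kernel combined with its vanishing dimension.
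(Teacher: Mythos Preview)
Your proof is correct and follows essentially the same approach as the paper: descend $\varphi_k$ through $N_c$ to $N_c^{(\infty)}$, extend to the Lie hull via Mal'cev superrigidity, argue surjectivity from the fact that the image is a closed connected cocompact subgroup of a simply-connected nilpotent Lie group, and then deduce injectivity from the zero-dimensional kernel together with simple connectedness. Your homotopy long exact sequence is just a rephrasing of the paper's covering-space argument, and your observation that one cannot mimic Step~2 of Section~\ref{sec:nilp} by extending an inverse is exactly why the paper, too, switches to the dimension argument here.
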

\begin{proof} 
It is clear each $\varphi_k$ descends to a map
	$$\overline{\varphi}_k:N_c\to N_c$$
such that the image $\overline{\varphi}_k(N_c)$ is normal and finite-index in $N_c$. Since the image of any torsion element is torsion, we see that each $\overline{\varphi}_k$ in turn descends to a map
	$$\overline{\varphi}_k^{(\infty)}:N_c^{(\infty)}\to N_c^{(\infty)}$$
with finite-index and normal image. It remains to show $\overline{\varphi}_k^{(\infty)}$ is injective. 

Let $G=N_c^{(\infty)}\otimes\bbR$ be the real Mal'cev completion of $N_c^{(\infty)}$, i.e. the unique simply-connected nilpotent Lie group containing $N_c^{(\infty)}$ as a cocompact lattice (see Theorem \ref{thm:nilp_hull}). By Mal'cev's Superrigidity Theorem \ref{thm:super_malcev}, the maps $\overline{\varphi}_k^{(\infty)}$ extend to 
continuous homomorphisms
	$$\overline{\varphi}_k^{(\infty)}:G\to G.$$
We claim these are isomorphisms. Indeed, $\overline{\varphi}_k^{(\infty)}(N_c^{(\infty)})$ is a finite-index subgroup of $N_c^{(\infty)}$ and hence a cocompact lattice in $G$. Therefore $\overline{\varphi}_k(G)$ is a connected closed cocompact subgroup of $G$. But since $G$ is a simply-connected nilpotent Lie group, the only connected closed cocompact subgroup of $G$ is $G$ itself (see e.g. \cite[2.1]{raghlie}). It follows that $\overline{\varphi}_k^{(\infty)}$ is surjective. 

By a dimension count, we see that $\dim \ker\overline{\varphi}_k^{(\infty)}=0$, i.e. $\ker\overline{\varphi}_k^{(\infty)}$ is discrete, so that $\overline{\varphi}_k^{(\infty)}$ is a covering map. Since $\overline{\varphi}_k^{(\infty)}(G)=G$ is simply-connected, it follows that $\overline{\varphi}_k^{(\infty)}$ is an automorphism of $G$, as desired. \end{proof}

Consider the set $\mathscr{P}$ of primes $p$ such that for some $k\geq 1$, we have $(\Gamma\slash\Gamma_k)^{(p)}\neq 1$. If $\mathscr{P}$ is finite, then the conclusion of Theorem \ref{thm:almost_global} is immediately satisfied. Henceforth we assume $\mathscr{P}$ is infinite.

\begin{claimnr} For any $c\geq 1$, we have $[N_c^{(\infty)}:\overline{\varphi}_k^{(\infty)}(N_c^{(\infty)})]\overset{k\to\infty}{\longrightarrow}\infty.$
\label{cl:index_div}\end{claimnr}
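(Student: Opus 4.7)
The plan is to exploit the standing assumption that $\mathscr{P}$ is infinite. I will show that every prime $p\in\mathscr{P}$, apart from a finite ``bad'' set coming from torsion, must divide $[N_c^{(\infty)}:\overline{\varphi}_k(N_c^{(\infty)})]$ for all sufficiently large $k$. Since $\mathscr{P}$ is infinite, the index will then accumulate arbitrarily many distinct prime factors as $k\to\infty$ and hence diverge.

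Concretely, write $\pi_c:\Gamma\twoheadrightarrow N_c$ for the canonical projection. Because $\varphi_k(\Gamma)=\Gamma_k$, the image $\overline{\varphi}_k(N_c^{(\infty)})$ is exactly the image of $\Gamma_k$ in $N_c^{(\infty)}$, so
$$[N_c^{(\infty)} : \overline{\varphi}_k(N_c^{(\infty)})] \;=\; [N_c : \pi_c(\Gamma_k)\cdot\Tor(N_c)].$$
By Proposition \ref{prop:tor_subgp}, $\Tor(N_c)$ is finite, hence the set $T$ of primes dividing $|\Tor(N_c)|$ is finite. For any $p\notin T$, the $p$-adic valuation of the displayed index agrees with that of $[N_c : \pi_c(\Gamma_k)]$, which in turn equals $|(\Gamma/\Gamma_k)/\gamma_c(\Gamma/\Gamma_k)|$.

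Next, I would use the Sylow decomposition $\Gamma/\Gamma_k\cong\prod_p (\Gamma/\Gamma_k)^{(p)}$ from Proposition \ref{prop:nilp_prod} together with the fact that $\gamma_c$ commutes with direct products to identify the $p$-part of this order as $|(\Gamma/\Gamma_k)^{(p)}/\gamma_c((\Gamma/\Gamma_k)^{(p)})|$. For any nontrivial finite nilpotent group $H$ and $c\geq 1$, one has $\gamma_c(H)\subseteq [H,H]\subsetneq H$, so this $p$-part is strictly greater than $1$ whenever $(\Gamma/\Gamma_k)^{(p)}\neq 1$. Moreover, nontriviality of $(\Gamma/\Gamma_k)^{(p)}$ persists in $k$: for $k'\geq k$, the surjection $\Gamma/\Gamma_{k'}\twoheadrightarrow\Gamma/\Gamma_k$ of finite nilpotent groups restricts to a surjection on $p$-Sylow subgroups, so if the target is nontrivial then so is the source. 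Hence for each $p\in\mathscr{P}$ there exists $K_p$ with $(\Gamma/\Gamma_k)^{(p)}\neq 1$ for all $k\geq K_p$.

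To conclude, note that $\mathscr{P}\setminus T$ is infinite. Given any $M\geq 1$, choose finitely many primes $p_1,\dots,p_m\in\mathscr{P}\setminus T$ with $p_1\cdots p_m\geq M$. Then for all $k\geq\max_i K_{p_i}$, each $p_i$ divides $[N_c^{(\infty)}:\overline{\varphi}_k(N_c^{(\infty)})]$, hence so does $p_1\cdots p_m$, and the index is at least $M$. The only real subtlety is the initial reduction, tracking $p$-adic valuations cleanly from $|\Gamma/\Gamma_k|$ down through $[N_c : \pi_c(\Gamma_k)]$ to $[N_c^{(\infty)}:\overline{\varphi}_k(N_c^{(\infty)})]$; isolating the effect of torsion to the finite set $T$ is what makes this work.
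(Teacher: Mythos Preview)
Your proof is correct and follows essentially the same route as the paper's: both arguments exploit the infinitude of $\mathscr{P}$ by showing that every prime in $\mathscr{P}$ (outside a finite bad set) eventually divides the index, and that nontriviality of $(\Gamma/\Gamma_k)^{(p)}$ persists along the descending chain. The only cosmetic difference is that the paper first reduces to $c=1$ via the inequality $[N_c:\overline{\varphi}_k(N_c)]\geq [N_1:\overline{\varphi}_k(N_1)]=|(\Gamma/\Gamma_k)^{\ab}|$, whereas you stay at general $c$ and track $p$-adic valuations past $\Tor(N_c)$; both are handling the same torsion issue in equivalent ways.
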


\begin{rmknr} At this point we do not even know that $N_c$ is infinite, but this will follow from the above claim.\end{rmknr}

\begin{proof} Since the map $N_c\to N_c^{(\infty)}$ has finite kernel $\Tor(N_c)$, we have for any $c$ and $k$ that
	$$[N_c^{(\infty)}:\overline{\varphi}_k^{(\infty)}(N_c^{(\infty)})]\geq \frac{1}{|\Tor(N_c)|} [N_c:\overline{\varphi}_k(N_c)].$$
Therefore it suffices to show that for any $k\geq 1$, we have 
	\begin{equation}
	[N_c:\overline{\varphi}_k(N_c)]\overset{k\to\infty}{\longrightarrow}\infty.
	\label{eq:index_div}
	\end{equation}
Further, it suffices to establish \eqref{eq:index_div} for $c=1$ because the abelianization map $\ab: N_c\to N_1=\Gamma^{\ab}$ satisfies $\ab\left(\overline{\varphi}_k(N_c)\right)=\overline{\varphi}_k(N_1),$ so that
	$$[N_c:\overline{\varphi}_k(N_c)]\geq [N_1:\overline{\varphi}_k(N_1)].$$
It remains to show that $[N_1:\overline{\varphi}_k(N_1)]\to\infty$. It is straightforward to see that
	$$N_1\slash\overline{\varphi}_k(N_1) = (\Gamma\slash\Gamma_k)^{\ab}$$
so $[N_1:\overline{\varphi_k}(N_1)]=|(\Gamma\slash\Gamma_k)^{\ab}|.$ Let $\mathscr{P}_k$ be the set of prime divisors of $|\Gamma\slash\Gamma_k|$. Since $\Gamma\slash\Gamma_k$ splits as a product of nontrivial $p$-groups for $p\in\mathscr{P}_k$, and any nontrivial $p$-group has nontrivial abelianization, we find
	$$|(\Gamma\slash\Gamma_k)^{\ab}|\geq \prod_{p\in\mathscr{P}_k} p.$$
By assumption, $\mathscr{P}=\cup_k \mathscr{P}_k$ is infinite, so we have $|(\Gamma\slash\Gamma_k)^{\ab}|\to\infty$, as desired.\end{proof}

Since $[N_c^{(\infty)}:\overline{\varphi}_k^{(\infty)}(N_c^{(\infty)})]\to\infty$, we can assume (after passing to a subsequence if necessary) that 
	$$N_c^{(\infty)}\supseteq \overline{\varphi}_1^{(\infty)}(N_c^{(\infty)})\supseteq \overline{\varphi}_2^{(\infty)}(N_c^{(\infty)})\supseteq \dots$$
is a descending chain of subgroups. By Theorem \ref{thm:nilp_case} (the nilpotent case of Theorem \ref{thm:main}), we see that $\overline{N_c^{(\infty)}}:=N_c^{(\infty)}\slash\cap_k \overline{\varphi}_k^{(\infty)}(N_c^{(\infty)})$ is abelian modulo torsion. In fact, we can conclude the following slightly stronger result.

\begin{lemnr} $\overline{N}_c:=N_c\slash\cap_k \overline{\varphi}_k(N_c)$ is abelian modulo its torsion. 
\label{lem:ab_mod_tor}
\end{lemnr}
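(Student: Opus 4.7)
The plan is to lift the conclusion that $\overline{N_c^{(\infty)}}$ is abelian modulo torsion to $\overline{N}_c := N_c/\bigcap_k \overline{\varphi}_k(N_c)$, using that the natural quotient $\pi: N_c \twoheadrightarrow N_c^{(\infty)}$ has finite kernel $T := \Tor(N_c)$ by Proposition \ref{prop:tor_subgp}. Set $A_k := \overline{\varphi}_k(N_c)$, so $\pi(A_k) = \overline{\varphi}_k(N_c^{(\infty)})$.

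The first step is to show that the induced surjection $\bar\pi: \overline{N}_c \twoheadrightarrow \overline{N_c^{(\infty)}}$ has finite kernel. The crux is the identity
$$\pi\!\left(\bigcap_k A_k\right) = \bigcap_k \pi(A_k).$$
The inclusion $\subseteq$ is immediate. For the reverse, given $y \in \bigcap_k \pi(A_k)$, pick any lift $x_0 \in N_c$ with $\pi(x_0) = y$; for each $k \geq 1$ choose $x_k \in A_k$ with $\pi(x_k) = y$, so $x_0 = x_k t_k$ for some $t_k \in T$. Since $T$ is finite, pigeonhole produces $t \in T$ with $t_k = t$ for infinitely many $k$, and then $x_0 t^{-1} \in A_k$ for infinitely many $k$, hence for every $k$ because $(A_k)_k$ is descending. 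Thus $x_0 t^{-1} \in \bigcap_k A_k$ projects to $y$, proving the equality. Consequently $\pi^{-1}\!\left(\bigcap_k \pi(A_k)\right) = \left(\bigcap_k A_k\right)\cdot T$, so $\ker \bar\pi \cong T/(T \cap \bigcap_k A_k)$ is finite.

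The second step is to pass to the torsion-free quotients. Both $\overline{N}_c$ and $\overline{N_c^{(\infty)}}$ are finitely generated nilpotent, so their torsion subgroups are finite by Proposition \ref{prop:tor_subgp}. Since $\ker \bar\pi$ is finite it is contained in $\Tor(\overline{N}_c)$; conversely any element whose $\bar\pi$-image is torsion has some power in $\ker \bar\pi$ and is therefore itself torsion. Hence $\bar\pi^{-1}\!\left(\Tor(\overline{N_c^{(\infty)}})\right) = \Tor(\overline{N}_c)$, and $\bar\pi$ descends to an isomorphism
$$\overline{N}_c \big/ \Tor(\overline{N}_c) \;\overset{\cong}{\longrightarrow}\; \overline{N_c^{(\infty)}} \big/ \Tor(\overline{N_c^{(\infty)}}).$$
The target is abelian by the conclusion just drawn for $\overline{N_c^{(\infty)}}$, so $\overline{N}_c$ is abelian modulo torsion as well.

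I expect the main obstacle to be the identity $\pi(\bigcap_k A_k) = \bigcap_k \pi(A_k)$, since the image of an intersection and the intersection of images do not agree in general; it holds here only because $T$ is finite and the chain $(A_k)_k$ is descending, letting a pigeonhole argument produce a coherent lift. Once this is in hand, the remainder of the proof is formal quotient manipulation.
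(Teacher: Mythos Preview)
Your proof is correct and follows the same route as the paper: both pass through the surjection $\overline{N}_c \twoheadrightarrow \overline{N_c^{(\infty)}}$ induced by reduction modulo $\Tor(N_c)$, observe that its kernel is the finite group $\Tor(N_c)\big/\bigl(\Tor(N_c)\cap\bigcap_k\overline{\varphi}_k(N_c)\bigr)$, and conclude. Your write-up is in fact more careful than the paper's on the one nontrivial point---the identity $\pi\bigl(\bigcap_k A_k\bigr)=\bigcap_k\pi(A_k)$, which the paper states without comment; your pigeonhole argument using finiteness of $T$ and the descending chain is exactly what is needed to justify it.
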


\begin{proof} Taking the quotient by Tor$(N_c)$ gives a surjective map 
	$$\pi_c^{(\infty)}:N_c\to N_c^{(\infty)}$$
that maps $\cap_k \overline{\varphi}_k(N_c)$ to $\cap_k \overline{\varphi}_k^{(\infty)}(N_c^{(\infty)})$. Hence $\pi_c^{(\infty)}$ descends to a surjective map
	$$\overline{N_c} \to \overline{N_c^{(\infty)}}$$
with finite kernel Tor$(N_c)\slash (\text{Tor}(N_c)\cap \cap_k \overline{\varphi}_k(N_c))$. Since $\overline{N_c^{(\infty)}}$ is abelian modulo its torsion, the same holds for $\overline{N_c}$.\end{proof}
Next we show the torsion-free part $\overline{N}_c^{(\infty)}$ of $\overline{N}_c$ does not depend on $c$.
	\begin{lemnr} Let $c>d\geq 1$. Then the natural maps $\overline{\pi}_c:\overline{N}_c\to\overline{N}_{d}$ descend to isomorphisms
		$$\overline{\pi}_c^{(\infty)}:\overline{N}_c^{(\infty)}\overset{\cong}{\longrightarrow} \overline{N}_{d}^{(\infty)}.$$
In particular, $\ker(\overline{\pi}_c)$ is torsion.
	\label{lem:tor_ker}
	\end{lemnr}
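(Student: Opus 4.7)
The plan is to exploit Lemma \ref{lem:ab_mod_tor} together with the observation that the kernel of $\pi_c: N_c \twoheadrightarrow N_d$ lies in the commutator subgroup of $N_c$. First I would set up the map carefully. Since $c > d$ we have $\gamma_c(\Gamma) \subseteq \gamma_d(\Gamma)$, so there is a natural surjection $\pi_c : N_c \twoheadrightarrow N_d$ with kernel $\gamma_d(\Gamma)/\gamma_c(\Gamma)$, which is precisely $\gamma_d(N_c)$. Because the maps $\overline{\varphi}_k$ on $N_c$ and $N_d$ are both induced from the self-embeddings $\varphi_k : \Gamma \hookrightarrow \Gamma$, they commute with $\pi_c$, so $\pi_c$ carries $\overline{\varphi}_k(N_c)$ onto $\overline{\varphi}_k(N_d)$ and therefore sends $\bigcap_k \overline{\varphi}_k(N_c)$ into $\bigcap_k \overline{\varphi}_k(N_d)$. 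This yields a well-defined surjection $\overline{\pi}_c : \overline{N}_c \twoheadrightarrow \overline{N}_d$.

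Next I would identify $\ker(\overline{\pi}_c)$. Since $d \geq 1$, we have $\gamma_d(N_c) \subseteq [N_c, N_c]$, so the image of $\gamma_d(N_c)$ in $\overline{N}_c$ is contained in $[\overline{N}_c, \overline{N}_c]$. By Lemma \ref{lem:ab_mod_tor}, $\overline{N}_c$ is abelian modulo torsion, i.e.\ $[\overline{N}_c, \overline{N}_c] \subseteq \Tor(\overline{N}_c)$. But $\ker(\overline{\pi}_c)$ is exactly the image of $\ker(\pi_c) = \gamma_d(N_c)$ in $\overline{N}_c$, so $\ker(\overline{\pi}_c) \subseteq \Tor(\overline{N}_c)$. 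This already gives the ``in particular'' conclusion.

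Finally, for the isomorphism statement I would pass to the torsion-free quotients. Because torsion maps to torsion, $\overline{\pi}_c$ descends to a surjective homomorphism $\overline{\pi}_c^{(\infty)} : \overline{N}_c^{(\infty)} \twoheadrightarrow \overline{N}_d^{(\infty)}$. For injectivity, suppose $\widetilde{x} \in \overline{N}_c$ has $\overline{\pi}_c(\widetilde{x}) \in \Tor(\overline{N}_d)$, say $\overline{\pi}_c(\widetilde{x}^n) = e$. Then $\widetilde{x}^n \in \ker(\overline{\pi}_c) \subseteq \Tor(\overline{N}_c)$, so $\widetilde{x}$ is itself torsion and its class in $\overline{N}_c^{(\infty)}$ vanishes. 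Hence $\overline{\pi}_c^{(\infty)}$ is an isomorphism.

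There is no real obstacle here: given Lemma \ref{lem:ab_mod_tor}, the whole argument is the single observation that passing from $N_c$ to $N_d$ only kills iterated commutators, and iterated commutators are absorbed into the torsion subgroup of $\overline{N}_c$ supplied by that lemma. The only point requiring a small bit of care is the compatibility of $\pi_c$ with the $\overline{\varphi}_k$, which follows formally from the fact that all the maps in sight are descents of $\varphi_k : \Gamma \hookrightarrow \Gamma$.
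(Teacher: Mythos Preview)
Your overall strategy is correct, but there is a gap in the identification of $\ker(\overline{\pi}_c)$. You assert that $\ker(\overline{\pi}_c)$ is \emph{exactly} the image of $\ker(\pi_c)=\gamma_d(N_c)$ in $\overline{N}_c$; only the inclusion ``image of $\gamma_d(N_c)\subseteq\ker(\overline{\pi}_c)$'' is immediate. For equality one would need $\pi_c\bigl(\cap_k\,\overline{\varphi}_k(N_c)\bigr)=\cap_k\,\overline{\varphi}_k(N_d)$, i.e.\ that the image of the intersection equals the intersection of the images, and you have only established ``$\subseteq$''. What one actually has is
\[
\ker(\overline{\pi}_c)\;=\;\Bigl(\bigcap_k\,\overline{\varphi}_k(N_c)\cdot\gamma_d(N_c)\Bigr)\Big/\bigcap_k\,\overline{\varphi}_k(N_c),
\]
which may a priori be strictly larger than the image of $\gamma_d(N_c)$.

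The repair is short and uses that $\Tor(\overline{N}_c)$ is finite. Let $X_k$ denote the image of $\overline{\varphi}_k(N_c)$ in $\overline{N}_c$ (a decreasing chain with trivial intersection) and set $T:=\Tor(\overline{N}_c)$. Your correct observation that the image of $\gamma_d(N_c)$ lies in $[\overline{N}_c,\overline{N}_c]\subseteq T$ then gives $\ker(\overline{\pi}_c)\subseteq\cap_k(X_kT)$. For any $z\in\cap_k(X_kT)$ the finite coset $zT$ meets every $X_k$, so the decreasing chain of nonempty finite sets $zT\cap X_k$ has nonempty intersection, i.e.\ $zT$ meets $\cap_k X_k=\{e\}$ and hence $z\in T$. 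Thus $\ker(\overline{\pi}_c)\subseteq T$, and the rest of your argument goes through unchanged.

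The paper argues somewhat differently: rather than computing $\ker(\overline{\pi}_c)$, it reduces to $d=1$ and constructs an explicit inverse to $\overline{\pi}_c^{(\infty)}$ by noting that the surjection $\Gamma\to\overline{N}_c^{(\infty)}$ has torsion-free abelian target and contains $\Gamma_\infty$ in its kernel, hence factors through $\overline{N}_1^{(\infty)}$. Your direct approach via the commutator subgroup is equally valid once the gap above is closed.
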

	\begin{proof} Surjectivity of the quotient map $N_c\to N_{d}$ immediately yields that $\overline{\pi}_c$ is surjective. To show it is injective, it suffices to consider the case $d=1$. Consider the composition
		\begin{equation} \Gamma \to \overline{N}_c\to \overline{N}_c^{(\infty)}.\label{eq:factor}\end{equation}
	Since $\Gamma_\infty$ is contained in the kernel of \eqref{eq:factor} and $\overline{N}_c^{(\infty)}$ is torsion-free abelian, we see \eqref{eq:factor} factors through the quotient $\Gamma\to \overline{N}_1^{(\infty)}$ and hence descends to a map $\overline{N}_1^{(\infty)}\to\overline{N}_c^{(\infty)}$, which is easily checked to be inverse to $\overline{\pi}_c^{(\infty)}$. \end{proof}
%
%

Our next step is most conveniently phrased in terms of subvarieties of groups and verbal subgroups associated to the lower central series. Let us therefore first recall these notions and fix some notation.
\begin{dfnnr} Let $V$ be a collection of words on some collection of symbols. Let $V(G)$ denote the corresponding verbal subgroup of $G$, and write $\mathfrak{V}(G):=G\slash V(G)$ for the largest quotient of $G$ that satisfies all the group laws given by $V$.\end{dfnnr}
Note that terms of the lower central series of a group give examples of verbal subgroups. For the $c$th term, we can consider the verbal subgroup $V(G)=\gamma_d(G)$, and we will write $\Nil_c(G):=\mathfrak{V}(G)=G\slash\gamma_d(G)$. For example, in the special case $G=\Gamma$ we have been writing $N_c=\Nil_c(\Gamma)$. We need the following general fact, the proof of which is straightforward.
\begin{lemnr} Let $G$ be any group and suppose $H\subseteq G$ is a normal subgroup. Write $\mathfrak{V}_G(H)$ for the image of $H$ in $\mathfrak{V}(G)$. Then 
	$$\mathfrak{V}(G)\slash \mathfrak{V}_G(H)\cong \mathfrak{V}(G\slash H).$$
\label{lem:verbal}
\end{lemnr}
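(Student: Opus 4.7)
The plan is to compute both sides explicitly and identify them with a common third quotient, namely $G/HV(G)$. The only nontrivial input is the compatibility of verbal subgroups with surjective homomorphisms.

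First I would unpack the left-hand side. By definition $\mathfrak{V}(G) = G/V(G)$, and $\mathfrak{V}_G(H) \subseteq \mathfrak{V}(G)$ is the image of $H$ under the quotient map $G \to G/V(G)$, which equals $HV(G)/V(G)$. Note that $HV(G)$ is a subgroup of $G$ because $V(G)$ is normal (in fact fully invariant) in $G$, and it is normal in $G$ because $H$ is normal and $V(G)$ is normal. Applying the third isomorphism theorem gives
$$\mathfrak{V}(G)/\mathfrak{V}_G(H) = (G/V(G))/(HV(G)/V(G)) \cong G/HV(G).$$

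Next I would unpack the right-hand side. Let $\pi: G \to G/H$ be the quotient map. The key claim is $\pi(V(G)) = V(G/H)$; equivalently, $V(G/H) = HV(G)/H$. This holds because $V(G)$ is generated by the elements $w(g_1,\ldots,g_n)$ with $w \in V$ and $g_i \in G$, and a surjective homomorphism sends such generators to the analogous generators in the image, so $\pi(V(G))$ contains a generating set of $V(G/H)$; the reverse inclusion is immediate. Using this identification and the third isomorphism theorem again,
$$\mathfrak{V}(G/H) = (G/H)/V(G/H) = (G/H)/(HV(G)/H) \cong G/HV(G).$$

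Combining the two computations yields the desired isomorphism $\mathfrak{V}(G)/\mathfrak{V}_G(H) \cong \mathfrak{V}(G/H)$. The only step requiring any thought is the equality $V(G/H) = \pi(V(G))$, which is a standard property of verbal subgroups under surjections; everything else is a routine application of the isomorphism theorems. Naturality of the isomorphisms with respect to the quotient maps $G \to \mathfrak{V}(G) \to \mathfrak{V}(G)/\mathfrak{V}_G(H)$ and $G \to G/H \to \mathfrak{V}(G/H)$ can be recorded along the way if needed, but is not required for the statement as phrased.
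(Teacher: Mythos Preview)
Your proof is correct and is exactly the standard ``straightforward'' argument the paper alludes to (the paper does not actually spell out a proof, stating only that the result is straightforward). Identifying both sides with $G/HV(G)$ via the third isomorphism theorem, using that $V(\cdot)$ commutes with surjections, is the natural way to do it.
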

Our goal will now be to show that any nonabelian subgroup of $\Gamma\slash\Gamma_k$ is located at divisors of $\Tor(\overline{N}_2)$. More precisely, if $\Gamma\slash\Gamma_k$ is nilpotent of class $c$, we have $\Gamma\slash\Gamma_k = N_c\slash\overline{\varphi}_k(N_c)$. The latter surjects onto $N_c^{(\infty)}\slash\overline{\varphi}_k^{(\infty)}(N_c^{(\infty)})$, which is abelian by Lemma \ref{lem:ab_mod_tor}. Therefore any nonabelian subgroup of $\Gamma\slash\Gamma_k$ is located at divisors of $|\Tor(N_c)|$. Our goal is to show such a subgroup in fact needs to be located at divisors of $|\Tor(\overline{N}_2)|$.

Unfortunately in general the torsion of $N_c$ can grow as $c\to\infty$. We remedy this in the following way. Since $\overline{N}_c$ is finitely generated and nilpotent, $\overline{N}_c$ is virtually torsion-free (See Proposition \ref{prop:virt_ab_crit}). Let $\overline{N}_c^{(\text{tf})}$ be a finite-index normal torsion-free subgroup of $\overline{N}_c$. Write 
	$$\overline{N}_c^{(\fin)}:=\overline{N}_c\slash\overline{N}_c^{(\text{tf})}$$
for the quotient. Set $\overline{T}_c:=\Tor(\overline{N}_c)$. Since $\overline{N}_c^{(\text{tf})}$ is torsion-free, $\overline{T}_c$ projects isomorphically onto its image in $\overline{N}_c^{(\fin)}$.

Consider the image of $\overline{N}_c^{(\text{tf})}$ under $\overline{\pi}_d:\overline{N}_c\to \overline{N}_2$. Since $\ker\overline{\pi}_d$ is torsion (by Lemma \ref{lem:tor_ker}), we see that $\overline{N}_2^{(\text{tf})}:=\overline{\pi}_d(\overline{N}_c^{(\text{tf})})$ is a torsion-free finite-index normal subgroup of $\overline{N}_2$. Write
	$$\overline{N}_2^{(\fin)}:=\overline{N}_2\slash\overline{N}_2^{(\text{tf})}$$
for the quotient. Unlike the situation for torsion groups of $\overline{N}_c$ at different values of $c$, there is an easy description of the relationship between $\overline{N}_c^{(\fin)}$ and $\overline{N}_2^{(\fin)}$ as follows.

\begin{lemnr} We have
	\begin{enumerate}[(i)]
		\item $\overline{N}_c=\Nil_c(\overline{\Gamma})$, and
		\item $\Nil_2\left(\overline{N}_c^{(\fin)}\right)\cong \overline{N}_2^{(\fin)}.$
	\end{enumerate}
 \label{lem:nil2}
 \end{lemnr}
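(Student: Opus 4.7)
The plan is to establish (i) directly from the definitions and Lemma \ref{lem:verbal}, and then to deduce (ii) from (i) by one further application of Lemma \ref{lem:verbal}.

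For (i), I would apply Lemma \ref{lem:verbal} to the pair $(\Gamma,\Gamma_\infty)$ with $V=\gamma_c$ to identify $\Nil_c(\overline{\Gamma}) = N_c/\pi_c(\Gamma_\infty)$, where $\pi_c\colon\Gamma\to N_c$ is the natural surjection. Since $\overline{N}_c = N_c/\bigcap_k\overline{\varphi}_k(N_c)$ by construction, and $\overline{\varphi}_k(N_c)=\pi_c(\Gamma_k)$, the claim (i) is equivalent to the identity in $\Gamma$
\[
\Gamma_\infty\,\gamma_c(\Gamma) \;=\; \bigcap_k \Gamma_k\,\gamma_c(\Gamma).
\]
The inclusion $\subseteq$ is immediate from $\Gamma_\infty\subseteq\Gamma_k$. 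For the reverse inclusion, which says that the images of the $\Gamma_k/\Gamma_\infty$ in the finitely generated nilpotent group $\Nil_c(\overline{\Gamma})$ intersect trivially, I would exploit the self-similarity $\Gamma_k\cong\Gamma$: for fixed $k$, the isomorphism $\varphi_k$ transports the tail $(\Gamma_\ell)_{\ell\geq k}$ back into a descending chain inside $\Gamma$, allowing a descent/bootstrap argument off the fact that the original chain has intersection $\Gamma_\infty$.

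Given (i), part (ii) is a short formal calculation. Apply Lemma \ref{lem:verbal} to the pair $(\overline{N}_c,\overline{N}_c^{(\text{tf})})$ with the class-$2$ verbal subgroup to obtain
\[
\Nil_2\!\left(\overline{N}_c^{(\fin)}\right) \;=\; \Nil_2(\overline{N}_c)\,\big/\,\Nil_{2,\overline{N}_c}\!\left(\overline{N}_c^{(\text{tf})}\right).
\]
By (i) applied at levels $c$ and $2$, we have $\Nil_2(\overline{N}_c)=\Nil_2(\Nil_c(\overline{\Gamma}))=\Nil_2(\overline{\Gamma})=\overline{N}_2$. The factor $\Nil_{2,\overline{N}_c}(\overline{N}_c^{(\text{tf})})$ is by definition the image of $\overline{N}_c^{(\text{tf})}$ under $\overline{\pi}_c\colon\overline{N}_c\to\overline{N}_2$, which is exactly $\overline{N}_2^{(\text{tf})}$ (this is precisely how $\overline{N}_2^{(\text{tf})}$ was introduced). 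Hence $\Nil_2(\overline{N}_c^{(\fin)})=\overline{N}_2/\overline{N}_2^{(\text{tf})}=\overline{N}_2^{(\fin)}$.

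The main obstacle is the reverse inclusion in the key identity of (i): although $\bigcap_k\Gamma_k=\Gamma_\infty$ holds in $\Gamma$ by definition, intersections of subgroups do not commute with quotient maps in general, so the corresponding identity after multiplying through by $\gamma_c(\Gamma)$ is not automatic and must use the isomorphism data $\Gamma\cong\Gamma_k$ in an essential way.
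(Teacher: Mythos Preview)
Your plan matches the paper's proof essentially line for line: for (i) the paper applies Lemma~\ref{lem:verbal} to $(\Gamma,\Gamma_\infty)$ with $V=\gamma_c$, and for (ii) it applies Lemma~\ref{lem:verbal} again to $(\overline{N}_c,\overline{N}_c^{(\text{tf})})$ with the class-$2$ words, after noting $\Nil_2(\overline{N}_c)=\overline{N}_2$ from (i) --- exactly as you outline.

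One point worth flagging: the identity you isolate as the ``main obstacle'', namely $\bigcap_k \Gamma_k\,\gamma_c(\Gamma)=\Gamma_\infty\,\gamma_c(\Gamma)$ (equivalently, that $\bigcap_k\overline{\varphi}_k(N_c)$ equals the image of $\Gamma_\infty$ in $N_c$), is simply \emph{asserted} in the paper's proof without further argument. So your proposal is in fact more scrupulous than the paper here. Your suggested self-similarity/bootstrap idea is plausible but, as you note, the transported chain $\varphi_k^{-1}(\Gamma_\ell)_{\ell\ge k}$ need not coincide with the original chain $(\Gamma_j)_j$, so if you pursue this route you will need to make that descent precise.
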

\begin{proof} We apply Lemma \ref{lem:verbal} to the group $\Gamma$ with normal subgroup $\Gamma_\infty$ and verbal subgroup $\gamma_c(\Gamma)$. Since $\cap_k\, \overline{\varphi}_k(N_c)$ is the image of $\Gamma_\infty$ in $N_c$, we obtain that
	$$\overline{N}_c=N_c\slash\cap_k \overline{\varphi}_k(N_c) = \Nil_c(\Gamma\slash\Gamma_\infty).$$
This proves (i).

It follows that $\Nil_2(\overline{N}_c)=\overline{N}_2$. Applying Lemma \ref{lem:verbal} again to the group $\overline{N}_c$ with normal subgroup $\overline{N}_c^{(\text{tf})}$ and verbal subgroup given by the second term $\gamma_2$ of the lower central series (so $\mathfrak{V}(G)=\Nil_2(G)$ for any group $G$), we find
	$$
		\overline{N}_2\slash \overline{N}_2^{(\text{tf})} \cong \Nil_2(\overline{N}_c\slash \overline{N}_c^{(\text{tf})}).
	$$
\end{proof}

%

Write 
	$$\overline{N}_c^{(\fin)}=\prod_p \overline{N}_c^{(\fin,p)}$$
as a product of $p$-groups. We show that any nonabelian contributions are located at a finite set of primes that does not depend on $c$:

\begin{lemnr} Let $p$ be a prime that does not divide $|\overline{T}_2|$. Then $\overline{N}_c^{(\fin,p)}$ is abelian.
\label{lem:ab_or_div}
\end{lemnr}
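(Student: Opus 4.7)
The plan is to reduce to the case $c=2$ using Lemma \ref{lem:nil2}(ii), and then invoke the elementary fact that a nilpotent group $G$ with $\Nil_2(G)$ abelian is itself abelian. The work splits cleanly into a ``$c=2$'' input (which uses Lemma \ref{lem:ab_mod_tor}) and a ``propagation to general $c$'' step (which uses Lemma \ref{lem:nil2}(ii)), with everything respecting prime decompositions.

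First I would settle the case $c=2$. By Lemma \ref{lem:ab_mod_tor} applied at $c=2$, we have $[\overline{N}_2, \overline{N}_2] \subseteq \overline{T}_2$. Under the quotient $\overline{N}_2 \twoheadrightarrow \overline{N}_2^{(\fin)}$ commutators map to commutators, so $[\overline{N}_2^{(\fin)}, \overline{N}_2^{(\fin)}]$ is a quotient of a subgroup of $\overline{T}_2$; in particular its order divides $|\overline{T}_2|$. Now $\overline{N}_2^{(\fin)}$ is finite nilpotent, so it decomposes as $\prod_q \overline{N}_2^{(\fin,q)}$, and commutator subgroups respect direct products, so $[\overline{N}_2^{(\fin,p)}, \overline{N}_2^{(\fin,p)}]$ is the $p$-Sylow of $[\overline{N}_2^{(\fin)}, \overline{N}_2^{(\fin)}]$. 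If $p \nmid |\overline{T}_2|$ this $p$-Sylow is trivial, so $\overline{N}_2^{(\fin,p)}$ is abelian.

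Next, to pass from $c=2$ to arbitrary $c$, I would use that $\Nil_2$ commutes with finite direct products, since $\gamma_k(G \times H) = \gamma_k(G) \times \gamma_k(H)$. Hence the isomorphism $\Nil_2(\overline{N}_c^{(\fin)}) \cong \overline{N}_2^{(\fin)}$ of Lemma \ref{lem:nil2}(ii) restricts on $p$-Sylows to an isomorphism $\Nil_2(\overline{N}_c^{(\fin,p)}) \cong \overline{N}_2^{(\fin,p)}$, which is abelian by the previous step as soon as $p \nmid |\overline{T}_2|$.

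Finally I would invoke the fact that if $G$ is any nilpotent group such that $\Nil_2(G) = G/\gamma_2(G)$ is abelian, then $G$ is abelian. Indeed, $\Nil_2(G)$ abelian means $\gamma_1(G) \subseteq \gamma_2(G)$; combined with the trivial inclusion $\gamma_2(G) \subseteq \gamma_1(G)$ this gives $\gamma_1(G) = \gamma_2(G)$, and then applying $[\,\cdot\,,G]$ inductively yields $\gamma_1(G) = \gamma_k(G)$ for every $k\geq 1$. Since $G$ is nilpotent, $\gamma_k(G)=1$ for $k$ large, forcing $\gamma_1(G) = 1$. Applying this to the finite $p$-group $G = \overline{N}_c^{(\fin,p)}$ finishes the proof. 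I don't expect any real obstacle: the substantive content sits in Lemmas \ref{lem:ab_mod_tor} and \ref{lem:nil2}, and what remains is compatibility of $\Nil_2$, commutator subgroups, and $p$-Sylow decompositions in the finite nilpotent setting.
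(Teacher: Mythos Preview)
Your proposal is correct and follows essentially the same route as the paper: reduce to $c=2$ via Lemma~\ref{lem:nil2}(ii), show $\overline{N}_2^{(\fin,p)}$ is abelian using Lemma~\ref{lem:ab_mod_tor}, and conclude using that a nilpotent group with abelian $\Nil_2$ is abelian. The only cosmetic difference is in the $c=2$ step: you argue that $[\overline{N}_2^{(\fin)},\overline{N}_2^{(\fin)}]$ has order dividing $|\overline{T}_2|$ and take $p$-parts, whereas the paper observes that $\overline{N}_2^{(\fin,p)}$ meets the image of $\overline{T}_2$ trivially and hence embeds in the abelian quotient $\overline{N}_2^{(\fin)}/\overline{T}_2$; these are two phrasings of the same observation.
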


\begin{proof} 

%




By Lemma \ref{lem:nil2}.(ii), we have $\overline{N}_2^{(\fin,p)}=\Nil_2(\overline{N}_c^{(\fin,p)})$. Note that a nilpotent group $G$ is abelian if and only if $\Nil_2(G)$ is abelian. Therefore it suffices to show that $\overline{N}_2^{(\fin,p)}$ is abelian.

Since $p$ does not divide $|\overline{T}_2|$, we see that $\overline{N}_2^{(\fin,p)}$ intersects $\overline{T}_2$ trivially (inside $\overline{N}_2^{(\fin)}$) and hence projects isomorphically onto its image in $\overline{N}_2^{(\fin)}\slash \overline{T}_2$. But the latter group is abelian because
	$$\overline{N}_2^{(\fin)}\slash \overline{T}_2=\overline{N}_2\slash\langle \overline{N}_2^{(\text{tf})},\overline{T}_2\rangle$$
is a quotient of $\overline{N}_2\slash \overline{T}_2$, which is abelian because $\overline{N}_2$ is abelian modulo torsion (see Lemma \ref{lem:ab_mod_tor}). Therefore $\overline{N}_2^{(\fin,p)}$ is abelian, as desired.\end{proof}
	
We can now complete the proof of Theorem \ref{thm:almost_global}.
\begin{proof}[Proof of Theorem \ref{thm:almost_global}] We need to show that there is a finite set of primes $S$ and $c\geq 1$ such that for any $p\notin S$ and $k\geq 1$, the group $(\Gamma\slash \Gamma_k)^{(p)}$ is nilpotent of class at most $c$. Let $S$ be the set of divisors of $|\overline{T}_2|$. We will show that for $p\notin S$ and $k\geq 1$ arbitrary, $(\Gamma\slash\Gamma_k)^{(p)}$ is nilpotent of class at most 2, which will prove Theorem \ref{thm:almost_global}.

Let $c$ be the nilpotency class of $\Gamma\slash\Gamma_k$. Since $\Gamma\slash\Gamma_k$ is a quotient of $\overline{\Gamma}=\Gamma\slash\Gamma_\infty$, we we can realize $\Gamma\slash\Gamma_k$ as a quotient of $\Nil_c(\overline{\Gamma})$. By Lemma \ref{lem:nil2}.(i), we have $\Nil_c(\overline{\Gamma})=\overline{N}_c$. Then we must have
	$$(\Gamma\slash\Gamma_k)^{(p)} \cong (\overline{N}_c\slash\overline{\varphi}_k(\overline{N}_c))^{(p)}.$$
Since $\overline{N}_c$ is abelian modulo torsion, $\overline{N}_c\slash \langle \overline{\varphi}_k(\overline{N}_c), \overline{T}_c\rangle$ is abelian. Therefore the commutator subgroup $\gamma_1((\Gamma\slash \Gamma_k)^{(p)})$ is contained in the image of $\overline{T}_c^{(p)}$ in $\overline{N}_c\slash\varphi_k(\overline{N}_c)$. Hence to prove the claim it suffices to show that $\overline{T}_c^{(p)}$ is central in $\overline{N}_c$. 


Since $\overline{T}_c$ intersects $\overline{N}_c^{(\text{tf})}$ trivially, it suffices to show that $\overline{T}_c^{(p)}$ is central in $\overline{N}_c\slash \overline{N}_c^{(\text{tf})}$. To see this, write
	$$\overline{N}_2\slash \overline{N}_2^{(\text{tf})}\cong \prod_\ell (\overline{N}_c\slash \overline{N}_c^{(\text{tf})})^{(\ell)}$$
where $\ell$ runs over primes. Since $p$ does not divide $|\overline{T}_2|$, we know by Lemma \ref{lem:ab_or_div} that $(\overline{N}_c\slash \overline{N}_c^{(\text{tf})})^{(p)}$ is abelian, so $\overline{T}_c^{(p)}$ is indeed central.\end{proof}

\section{Local bound on nilpotency class}
\label{sec:local}

We will start by obtaining a local bound on nilpotency class of $\Gamma\slash\Gamma_k$ at a fixed prime $p$. Afterwards, we will finish the proof of Theorem \ref{thm:main}.
\begin{thmnr} Let $p$ be a prime. Then the nilpotency class of $(\Gamma\slash\Gamma_k)^{(p)}$ is bounded independently of $k$.
\label{thm:local}
\end{thmnr}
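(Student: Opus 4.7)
My plan is to push through a $p$-adic analog of the Lie-theoretic argument of Section~\ref{sec:nilp}, applied to the quotients $\overline{N}_c$ built in Section~\ref{sec:global}. Throughout, fix the prime $p$.

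\emph{Setup.} From Section~\ref{sec:global}, for each $c \geq 1$ I have the finitely generated nilpotent group $\overline{N}_c = \Nil_c(\overline{\Gamma})$, and $\overline{\varphi}_k$ descends to a self-map $\overline{N}_c \to \overline{N}_c$ with finite-index image. Since $\Gamma\slash\Gamma_k$ is nilpotent and factors through $\overline{N}_c$ for $c$ its nilpotency class, $(\Gamma\slash\Gamma_k)^{(p)}$ is the $p$-Sylow of a quotient of $\overline{N}_c$, so it suffices to bound the nilpotency class of the $p$-Sylow of $\overline{N}_c\slash\overline{\varphi}_k(\overline{N}_c)$ uniformly in $c$ and $k$. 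By Lemma~\ref{lem:tor_ker}, the Hirsch length of $\overline{N}_c$ equals $\rk(A)$ for all $c$, where $A=\overline{N}_c^{(\infty)}$.

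\emph{$p$-adic Lie hull.} First I would pass to a finite-index torsion-free subgroup $\overline{N}_c^{\mathrm{tf}}$ of $\overline{N}_c$ (available by Proposition~\ref{prop:virt_ab_crit}), chosen to be $\overline{\varphi}_k$-invariant. Tensoring its rational Mal'cev Lie algebra with $\bbQ_p$ yields a unipotent $p$-adic Lie group $G_c^{(p)}$ of dimension $\rk(A)$, containing the pro-$p$ completion $(\overline{N}_c^{\mathrm{tf}})^{\wedge p}$ as a compact open subgroup. A $p$-adic analog of Mal'cev superrigidity (Theorem~\ref{thm:super_malcev}) extends each $\overline{\varphi}_k$ to an automorphism of $G_c^{(p)}$.

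\emph{Archimedean-style argument in $p$-adics.} Mimicking Step~3 of Theorem~\ref{thm:nilp_case}, I would form
$$H_c^{(p)} := \overline{\bigcup_{k \geq 1} \overline{\varphi}_k^{-1}\bigl((\overline{N}_c^{\mathrm{tf}})^{\wedge p}\bigr)} \subseteq G_c^{(p)},$$
and produce a compact $p$-adic Lie group $K_c^{(p)}$ into which $\overline{N}_c^{\mathrm{tf}}\slash\overline{\varphi}_k(\overline{N}_c^{\mathrm{tf}})$ embeds via $\overline{\varphi}_k^{-1}$. Since $K_c^{(p)}$ is a compact nilpotent $p$-adic analytic group of dimension at most $\dim G_c^{(p)} = \rk(A)$, its nilpotency class is bounded by a function of $\rk(A)$ and $p$ (via the $\bbZ_p$-Lie-algebra correspondence applied to a uniform open subgroup). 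The $p$-Sylow of $\overline{N}_c^{\mathrm{tf}}\slash\overline{\varphi}_k(\overline{N}_c^{\mathrm{tf}})$ inherits this bound, and transferring back from $\overline{N}_c^{\mathrm{tf}}$ to $\overline{N}_c$ adds only a finite, $k$-independent contribution from the finite quotient $\overline{N}_c\slash\overline{N}_c^{\mathrm{tf}}$.

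\emph{Main obstacle.} The hardest part will be producing the compact $p$-adic Lie group $K_c^{(p)}$ playing the role of the torus $K=H\slash\Gamma$ from the archimedean proof. Since $(\overline{N}_c^{\mathrm{tf}})^{\wedge p}$ is \emph{not} a discrete cocompact subgroup of $G_c^{(p)}$ in the archimedean sense, the naive quotient $H_c^{(p)}\slash(\overline{N}_c^{\mathrm{tf}})^{\wedge p}$ fails to be compact, and one must use the contraction-group structure (Theorem~\ref{thm:contr} and Proposition~\ref{prop:con_nilp}) to extract the correct compact complement. A secondary and genuinely delicate issue is ensuring the torsion index $[\overline{N}_c:\overline{N}_c^{\mathrm{tf}}]$ can be chosen uniformly in $c$, given that the torsion of $\overline{N}_c$ itself may grow with $c$ at the bad primes identified in Section~\ref{sec:global}; compared to the archimedean case, I now only aim for a bound on nilpotency class rather than outright abelianness, which gives some slack.
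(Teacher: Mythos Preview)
Your approach is genuinely different from the paper's, but the issue you label ``secondary'' is in fact the entire content of the theorem at a bad prime, and it is left unresolved. By Lemma~\ref{lem:ab_mod_tor}, $\overline{N}_c$ is abelian modulo its torsion, so any torsion-free subgroup $\overline{N}_c^{\mathrm{tf}}$ injects into $\overline{N}_c\slash\overline{T}_c$ and is therefore already abelian. Hence your $p$-adic Lie hull $G_c^{(p)}$ is just $\bbQ_p^{\rk(A)}$, the compact group $K_c^{(p)}$ you hope to build is abelian regardless, and the whole $p$-adic analogue of Section~\ref{sec:nilp} is vacuous: it tells you only that a quotient of an abelian group is abelian. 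All of the nilpotency of $(\Gamma\slash\Gamma_k)^{(p)}$ at a bad prime lives in the image of $\overline{T}_c$, and you offer no mechanism to bound that contribution uniformly in $c$. Since $c$ is itself the nilpotency class of $\Gamma\slash\Gamma_k$, any $c$-dependent bound is circular. (Your ``main obstacle'' about non-compactness of $H_c^{(p)}\slash(\overline{N}_c^{\mathrm{tf}})^{\wedge p}$ is therefore moot: the ambient group is already abelian.)

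The paper avoids this circularity by not passing through the truncations $\overline{N}_c$ at all. It works instead with a single fixed object in the profinite completion: by Lemma~\ref{lem:local_surj}, for $K$ large the pro-$p$ piece $C_K^{(p)}=U_K^{(p)}\times S_K^{(p)}$ of Reid's decomposition (Theorem~\ref{thm:reid_emb}) surjects onto every $(\Gamma\slash\Gamma_k)^{(p)}$. The factor $U_K^{(p)}$ is genuinely nilpotent (open in a $p$-adic unipotent group), bounding the class of its image by some $c_1$; the factor $S_K^{(p)}$ has bounded exponent and derived length, so modulo the image of $U_K^{(p)}$ its images range over only finitely many isomorphism types (Lemma~\ref{lem:fin_solv}), each nilpotent of class at most some $c_2$. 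These combine to give the uniform bound. The point is that Reid's theorem supplies exactly the hard finiteness data---nilpotency of $U_K$ and finite exponent of $S_K$---that your filtration by $\overline{N}_c$ does not see.
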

Before starting the proof, let us first prepare the stage. Recall that the embeddings $\varphi_k:\Gamma\hookrightarrow \Gamma$ induce open embeddings $\hphi_k:\hGamma\hookrightarrow \hGamma$, which by Theorem \ref{thm:reid_emb}, give rise to a $\hphi_k$-invariant decomposition
	$$\hGamma = C_k \rtimes Q_k.$$
Further we can write
	$$C_k=U_k\times S_k$$
where $U_k$ is a compact open subgroup of a product of unipotent $p$-adic Lie groups, and $S_k$ is a solvable group of finite exponent. Of course $U_k$ is nilpotent. By Proposition \ref{prop:con_nilp}, we also know that $S_k$ is pronilpotent. Pronilpotent groups split as a product over primes (see Proposition \ref{prop:pronilp_prod}), so we can write
	$$C_k = \prod_p C_k^{(p)}$$
where $C_k^{(p)}$ is pro-p.

Let us sketch the proof of the bound of Theorem \ref{thm:local}. Fix a prime $p$. We obtain some initial control on $(\Gamma\slash\Gamma_k)^{(p)}$  by showing that for sufficiently large $K$, the obvious quotient map $\hGamma\to\Gamma\slash\Gamma_k$ restricts to a surjection of $C_K^{(p)}$ onto $(\Gamma\slash\Gamma_k)^{(p)}$ (Lemma \ref{lem:local_surj} below). 

In particular, $(\Gamma\slash\Gamma_k)^{(p)}$ is a quotient of $C_K^{(p)}$. If $C_K^{(p)}$ is nilpotent rather than pronilpotent (e.g. if $S_K=1$), then this immediately yields a uniform bound on the nilpotency class of $(\Gamma\slash\Gamma_k)^{(p)}$, which proves Theorem \ref{thm:local}. To obtain such a bound in the general case where $C_K^{(p)}$ is merely pronilpotent, a more careful analysis of the image of $S_K^{(p)}$ in $\Gamma\slash\Gamma_k$ is needed. More precisely, we will see that modulo the image of $U_K^{(p)}$, there are only finitely many (necessarily nilpotent) possibilities for the image of $S_K^{(p)}$, independent of $k$. This will enable us to complete the proof of Theorem \ref{thm:local}. Let us now carry out this strategy.

\begin{lemnr} Fix a prime $p$. Then for sufficiently large $K$, the quotient map $\hGamma\to \Gamma\slash\Gamma_k$ restricts to a surjection $C_K^{(p)}\twoheadrightarrow (\Gamma\slash\Gamma_k)^{(p)}.$ 
\label{lem:local_surj}
\end{lemnr}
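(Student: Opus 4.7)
The plan is to exploit the decreasing chain condition on $(\Gamma_k)_k$ via its passage to closures in $\hGamma$. Precisely, for $K \geq k$ the inclusion $\Gamma_K \subseteq \Gamma_k$ gives $\hphi_K(\hGamma) \subseteq \hphi_k(\hGamma)$, since $\hphi_K(\hGamma)$ is by construction the closure of $\Gamma_K$ in $\hGamma$ (cf. the proof of Proposition \ref{prop:ind_map}). Combining this with the explicit description of $Q_K$ in Remark \ref{rmk:cq_expl}, namely $Q_K = \bigcap_{n \geq 0} \hphi_K^n(\hGamma) \subseteq \hphi_K(\hGamma)$, forces $Q_K \subseteq \hphi_k(\hGamma)$; but $\hphi_k(\hGamma)$ is precisely the kernel of the quotient map $\pi_k\colon \hGamma \twoheadrightarrow \Gamma/\Gamma_k$. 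Hence $\pi_k(Q_K) = 1$, and the phrase ``sufficiently large $K$'' will be interpreted as $K \geq k$.

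Using the semidirect product decomposition $\hGamma = C_K \rtimes Q_K$, the vanishing $\pi_k(Q_K) = 1$ gives $\pi_k(C_K) = \pi_k(\hGamma) = \Gamma/\Gamma_k$, so that the restriction $\pi_k|_{C_K}$ is already surjective. To pass to the $p$-component, I would invoke Proposition \ref{prop:con_nilp} (so that $C_K$ is pronilpotent) together with Proposition \ref{prop:pronilp_prod} to split $C_K = \prod_q C_K^{(q)}$ as a product of its pro-$q$ parts. Each image $\pi_k(C_K^{(q)})$ is then a finite $q$-subgroup of the nilpotent group $\Gamma/\Gamma_k$; the various $\pi_k(C_K^{(q)})$ are pairwise commuting subgroups of pairwise coprime orders whose internal product equals $\pi_k(C_K) = \Gamma/\Gamma_k$. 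The uniqueness of Sylow subgroups in a finite nilpotent group (Proposition \ref{prop:nilp_prod}) then forces $\pi_k(C_K^{(q)}) = (\Gamma/\Gamma_k)^{(q)}$ for every prime $q$, and specializing $q = p$ gives the lemma.

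The only substantive step is the inclusion $Q_K \subseteq \hphi_k(\hGamma)$; everything else is Sylow-decomposition bookkeeping. The subtlety I expect to be the main obstacle for the downstream application is that this argument produces $K$ depending on $k$ (namely any $K \geq k$ will do) rather than a single $K$ that works for all $k$ simultaneously. Consequently, the ``uniform bound on nilpotency class of $(\Gamma/\Gamma_k)^{(p)}$'' described in the outline of Theorem \ref{thm:local} cannot simply be read off the class of one fixed $C_K^{(p)}$; rather, the nilpotency class of $C_K^{(p)}$ itself will have to be controlled uniformly in $K$, using the structural decomposition $C_K = U_K \times S_K$ coming from Reid's theorem.
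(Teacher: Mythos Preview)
Your argument is correct but proves a weaker statement than the lemma actually asserts, and the distinction matters for the application. The lemma requires a \emph{single} $K$ (depending only on $p$) such that $C_K^{(p)}$ surjects onto $(\Gamma/\Gamma_k)^{(p)}$ for \emph{every} $k\geq 1$; this is precisely how it is used in the proof of Theorem~\ref{thm:local}, where one fixes $K$ once and for all and then lets $k$ vary. Your proof shows only that for each $k$, any $K\geq k$ works, which (as you yourself note) is not sufficient downstream.

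The workaround you propose --- bounding the nilpotency class of $C_K^{(p)}$ uniformly in $K$ --- is not obviously tractable. The decomposition $C_K=U_K\times S_K$ from Reid's theorem gives no a priori control on the nilpotency class of $U_K$, nor on the exponent or derived length of $S_K$, as $K$ varies: these quantities depend on the particular contracting endomorphism $\hphi_K$, and the maps $\hphi_K$ for different $K$ come from \emph{arbitrary} isomorphisms $\Gamma\cong\Gamma_K$ and are in general unrelated. So this route relocates the difficulty rather than resolving it.

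The paper's approach is genuinely different and exploits a stabilization phenomenon. Since $\rk(\Gamma/\Gamma_k)^{(p),\ab}$ is nondecreasing in $k$ but bounded above by $\rk\,\Gamma^{\ab}$, it stabilizes at some $K$. One then argues that this same $K$ works for all $k$: if the image $H$ of $C_K^{(p)}$ in $(\Gamma/\Gamma_k)^{(p)}$ were proper, there would be a nontrivial homomorphism $(\Gamma/\Gamma_k)^{(p)}\to\bbF_p$ vanishing on $H$; but by rank stabilization every such map factors through $(\Gamma/\Gamma_K)^{(p)}$, onto which $H$ surjects (this is your easy case $k\leq K$), a contradiction. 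This yields the uniform $K$ directly, with no need to control how $C_K$ varies.
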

\begin{proof} Let us study the abelianization $(\Gamma\slash\Gamma_k)^{(p),\ab}$ of $(\Gamma\slash\Gamma_k)^{(p)}$. Clearly $(\Gamma\slash\Gamma_k)^{(p),\ab}$ surjects onto $(\Gamma\slash\Gamma_l)^{(p),\ab}$ whenever $k\geq l$. Hence rank$(\Gamma\slash\Gamma_k)^{(p),\ab}$ is nondecreasing in $k$.

On the other hand, since $\Gamma$ is finitely generated, we have
	$$\text{rank}(\Gamma\slash\Gamma_k)^{(p),\ab}\leq \text{rank}\,\Gamma^{\ab}<\infty$$
for any $k$. Therefore rank$(\Gamma\slash\Gamma_k)^{(p),\ab}$ is eventually constant. Let $K\geq 1$ such that $\rk(\Gamma\slash\Gamma_K)^{(p),\ab}$ is maximal. We claim that for any $k\geq 1$, the composition
	$$C^{(p)}_K\hookrightarrow \hGamma \twoheadrightarrow (\Gamma\slash\Gamma_k)^{(p)}$$
is surjective.

This is clear for $k\leq K$ because $\Gamma\slash\Gamma_k$ is a quotient of 		
	$$\Gamma\slash\Gamma_K\cong C_K\slash \hphi_K(C_K),$$
and hence $(\Gamma\slash\Gamma_k)^{(p)}$ is a quotient of $C_K^{(p)}\slash \hphi_K(C_K^{(p)})$.

Let $k>K$. Consider the image $H$ of the composition
	$$C^{(p)}_K\hookrightarrow \hGamma \twoheadrightarrow (\Gamma\slash\Gamma_k)^{(p)}.$$
We argue by contradiction, so assume that $H\subsetneq (\Gamma\slash\Gamma_k)^{(p)}$. Note that $H$ is normal in $(\Gamma\slash\Gamma_k)^{(p)}$ because $C_K^{(p)}$ is normal in $\hGamma$. The quotient
	$$F:=(\Gamma\slash\Gamma_k)^{(p)}\slash H$$
is a nontrivial $p$-group and hence there is a nontrivial map $F\to\bbF_p$. Consider the composition
	$$f: (\Gamma\slash\Gamma_k)^{(p)}\to F \to \bbF_p.$$
Since $(\Gamma\slash\Gamma_k)^{(p),\ab}$ and $(\Gamma\slash\Gamma_K)^{(p),\ab}$ have the same rank, any map $(\Gamma\slash\Gamma_k)^{(p),\ab}\to \bbF_p$ factors through 
	$$(\Gamma\slash\Gamma_k)^{(p),\ab} \to (\Gamma\slash\Gamma_K)^{(p),\ab}.$$
On the one hand, $f$ vanishes on the image $H$ of $C_K^{(p)}$ in $(\Gamma\slash\Gamma_k)^{(p)}$. On the other hand, $C_K^{(p)}$ surjects onto $(\Gamma\slash\Gamma_K)^{(p)}$. It follows that $f$ is trivial. This is a contradiction.\end{proof}

To complete the proof of Theorem \ref{thm:local} in the case that $S_K^{(p)}$ is not nilpotent but merely solvable, we need the following finiteness result for finite solvable groups. It is easily proved by induction on the length of the derived series.

\begin{lemnr} Let $r,d,N\geq 1$. There are only finitely many solvable groups that are generated by at most $r$ elements, have exponent at most $N$, and whose derived series has length at most $d$.
\label{lem:fin_solv}\end{lemnr}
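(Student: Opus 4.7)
The plan is to induct on $d$, bounding the order of $G$ at each step in terms of $r$, $d$, and $N$ (which is equivalent to finiteness of the set of isomorphism classes, since there are only finitely many groups of bounded order).

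For the base case $d=1$, the group $G$ is abelian, generated by $\leq r$ elements, with exponent dividing $N$, so it is a quotient of $(\bbZ\slash N\bbZ)^r$. In particular $|G|\leq N^r$, and there are only finitely many such $G$.

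For the inductive step, suppose the result holds for derived length $\leq d-1$. Let $G$ satisfy the hypotheses for $(r,d,N)$. The abelianization $G^{\ab}=G\slash [G,G]$ is generated by at most $r$ elements and has exponent dividing $N$, hence $|G^{\ab}|\leq N^r$ by the base case. In particular $[G:[G,G]]\leq N^r$ is finite, so Schreier's lemma applies and produces a generating set of $[G,G]$ of size at most $r':=1+N^r(r-1)$. The commutator subgroup $[G,G]$ inherits exponent $\leq N$ and has derived length $\leq d-1$, so by the inductive hypothesis $[G,G]$ is finite with order bounded by some $M=M(r',d-1,N)$. Consequently $|G|=|G^{\ab}|\cdot |[G,G]|\leq N^r\cdot M$, and there are only finitely many groups of order at most $N^rM$.

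I do not expect a major obstacle here: the only thing to keep track of is that, in order to apply the inductive hypothesis to $[G,G]$, one needs a uniform bound on the number of generators of $[G,G]$, and this is supplied cleanly by Schreier's lemma once one observes that $[G:[G,G]]$ is itself bounded by $N^r$ via the base case. Note also that finiteness of $G$ is not assumed in the statement but is obtained as a by-product of the induction, since finiteness of $[G,G]$ and of $G^{\ab}$ forces finiteness of $G$.
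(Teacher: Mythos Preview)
Your proof is correct and follows exactly the approach the paper indicates (the paper does not spell out a proof, only remarking that it ``is easily proved by induction on the length of the derived series''). One tiny phrasing quibble: in the base case the exponent $e$ need not \emph{divide} $N$, only satisfy $e\leq N$, but your bound $|G|\leq N^r$ still holds since an $r$-generated abelian group of exponent $e$ is a quotient of $(\bbZ/e\bbZ)^r$.
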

%
%

We can now finish the proof of Theorem \ref{thm:local}.

\begin{proof}[Proof of Theorem \ref{thm:local}] Fix a prime $p$. We need to bound the nilpotency class of $(\Gamma\slash\Gamma_k)^{(p)}$ independently of $k$. Choose $K\geq 1$ as in Lemma \ref{lem:local_surj}, i.e. such that $C_K^{(p)}$ surjects onto $(\Gamma\slash\Gamma_k)^{(p)}$. Recall that $C_K=U_K\times S_K$ where $U_K$ is nilpotent and $S_K$ is a pronilpotent solvable group with bounded exponent.

Let $k\geq 1$ and let $H_U$ be the image of $U_K^{(p)}$ in $(\Gamma\slash\Gamma_k)^{(p)}$. Likewise let $H_S$ be the image of $S_K^{(p)}$. Note that $H_U$ is a normal subgroup, and Lemma \ref{lem:fin_solv} applies to the quotient 
	$$(\Gamma\slash\Gamma_k^{(p)})\slash H_U\cong H_S\slash (H_U\cap H_S).$$
Namely its number of generators is at most the number of generators of $\Gamma$, and its exponent and length of derived series are bounded by those of $S_K^{(p)}$. Hence there are only finitely many possibilities (independently of $k$) for the isomorphism type of
	$$H_S\slash (H_U\cap H_S).$$
Further since $S_K^{(p)}$ is pronilpotent, each of the finitely many options for $H_S\slash (H_U\cap H_S)$ is nilpotent, say of class at most $c_2$. 

Finally, let $c_1$ be the nilpotency class of $U_K^{(p)}$, so that $H_U$ is nilpotent of class at most $c_1$. Since $H_U$ and $H_S$ are nilpotent subgroups of $(\Gamma\slash\Gamma_k)^{(p)}$ that are mutually centralizing and that together generate $(\Gamma\slash\Gamma_k)^{(p)}$, we easily see that $(\Gamma\slash\Gamma_k)^{(p)}=\langle H_U, H_S\rangle$ can be written as a central extension
	$$1\to H_U\cap H_S \to \langle H_U, H_S\rangle \to \left(H_U\slash(H_U\cap H_S)\right) \times \left(H_S\slash(H_U\cap H_S)\right)\to 1.$$
In particular $(\Gamma\slash\Gamma_k)^{(p)}$ is nilpotent of class at most $1+\max\{c_1,c_2\}$.\end{proof}

\section{Proofs of main results}
\label{sec:proofs}

Having obtained a uniform bound on the nilpotency class of $\Gamma\slash\Gamma_k$ by Theorems \ref{thm:almost_global} and \ref{thm:local}, we complete the proof of Theorem \ref{thm:main}.

\begin{proof}[Proof of Theorem \ref{thm:main}] By Theorems \ref{thm:almost_global} and \ref{thm:local}, there exists $c\geq 1$ such that $\Gamma\slash\Gamma_k$ is nilpotent of class at most $c$, and hence so is $\overline{\Gamma}$. Again set $N_c:=\Gamma\slash\gamma_c(\Gamma)$, so $\overline{\Gamma}$ is a quotient of $N_c$. More precisely, $\varphi_k$ descend to maps 
	$$\overline{\varphi}_k:N_c\to N_c$$
and setting 
	$$\overline{N}_c:=N_c\slash \left(\cap_k \overline{\varphi}_k(N_c)\right),$$
we have $\overline{\Gamma}\cong \overline{N}_c$. But using Theorem \ref{thm:nilp_case}, the nilpotent case of Theorem \ref{thm:main}, we have already shown that $\overline{N}_c$ is abelian modulo torsion (see Lemma \ref{lem:ab_mod_tor}). Hence $\overline{\Gamma}$ is abelian modulo torsion, as desired.\end{proof}

We prove Corollary \ref{cor:char} showing that finite-index characteristic subgroups of $\Gamma$ that are isomorphic to $\Gamma$, come from free abelian quotients.
\begin{proof}[Proof of Corollary \ref{cor:char}]  Choose an embedding $\varphi:\Gamma\hookrightarrow \Gamma$ with image $\Gamma'$. An easy argument by induction shows that $\Gamma_n:=\varphi^n(\Gamma)$ is normal for all $n\geq 0$: Indeed, if $\varphi^n(\Gamma)$ is normal in $\Gamma$, then $\Gamma$ acts on $\varphi^n(\Gamma)$ by automorphisms. But $\varphi^{n+1}(\Gamma)$ is characteristic in $\varphi^n(\Gamma)$, and hence is $\Gamma$-invariant. The result now follows immediately from Theorem \ref{thm:main}.\end{proof} 

Finally we establish Corollary \ref{cor:scaleinv} showing that if $\Gamma$ is scale-invariant and the chain of subgroups $(\Gamma_n)_n$ consists of normal subgroups, then $\Gamma$ is virtually abelian.

\begin{proof}[Proof of Corollary \ref{cor:scaleinv}] Set $\Gamma_\infty:=\cap_n \Gamma_n$ as before. By Theorem \ref{thm:main}, we have that $\Gamma\slash\Gamma_\infty$ is abelian modulo torsion. Therefore there is a finite-index subgroup $\Lambda\subseteq \Gamma$ and some abelian group $A$ such that $\Lambda$ is an extension
	$$1\to\Gamma_\infty \to \Lambda\to A\to 1.$$
Since $\Lambda$ is residually finite (because it is abelian-by-finite) and $\Gamma_\infty$ is finite (by scale-invariance of $\Gamma$), there exists a finite-index subgroup $\Delta\subseteq\Lambda$ such that $\Delta\cap\Gamma_\infty=1$ and hence $\Delta$ is abelian.\end{proof}

\bibliographystyle{alpha}
\bibliography{noncohopf}

\end{document}